\DeclareMathOperator{\dbc}{DBC}
\DeclareMathOperator{\HF}{HF}
\DeclareMathOperator{\rk}{rk}
\newcommand{\calA}{\mathcal{A}}
\newcommand{\Z}{\mathbb{Z}}
\newcommand{\F}{\mathcal{F}}
\newcommand{\Q}{\mathbb{Q}}
\newcommand{\spin}{\ifmmode{\rm Spin}\else{${\rm spin}$\ }\fi}
\newcommand{\spinc}{\ifmmode{{\rm Spin}^c}\else{${\rm spin}^c$}\fi}
\newcommand{\bdry}{\partial}
\newcommand{\nbhd}{\mathcal{N}}
\newcommand{\cut}{\backslash}
\newcommand{\Vast}{\bBigg@{2.5}} 
\newtheorem*{rep@theorem}{\rep@title}
\newcommand{\newreptheorem}[2]{%
	\newenvironment{rep#1}[1]{%
		\def\rep@title{#2 \ref{##1}}%
		\begin{rep@theorem}}%
		{\end{rep@theorem}}}
\newtheoremstyle{thm}{}{}{\itshape}{}{\bfseries}{}{ }{} 
\newtheoremstyle{definition}{}{}{}{}{\bfseries}{}{ }{} 
\theoremstyle{thm}
\newtheorem{Theorem}{Theorem}[section]
\newtheorem{thm}[Theorem]{Theorem}
\newtheorem{lem}[Theorem]{Lemma}
\newtheorem{prop}[Theorem]{Proposition}
\newtheorem{cor}[Theorem]{Corollary}
\newtheorem*{Theorem-ohne}{Theorem}
\newtheorem{ques}[Theorem]{Question}
\newtheorem*{thm:counterexample}{Theorem~\ref{thm:counterexample}}
\newtheorem*{thm:torus_knotsQA}{Theorem~\ref{thm:torus_knotsQA}}
\newtheorem*{thm:QAsurgeries}{Theorem~\ref{thm:QAsurgeries}}
\newtheorem*{thm:asym_QA}{Theorem~\ref{thm:asym_QA}}
\newtheorem*{thm:exceptionalsym}{Theorem~\ref{thm:exceptionalsym}}
\newtheorem*{thm:2uncertain}{Theorem~\ref{thm:2uncertain}}
\newtheorem*{thm:BLknots}{Theorem~\ref{thm:BLknots}}
\newtheorem*{cor:formal_torus}{Corollary~\ref{cor:formal_torus}}
\newtheorem*{cor:asymmetric_formal_L_spaces}{Corollary~\ref{cor:asymmetric_formal_L_spaces}}
\newtheorem*{thm:plusone}{Theorem~\ref{thm:plusone}}
\newtheorem*{thm:formal_integer}{Theorem~\ref{thm:formal_integer}}
\newtheorem*{prop:formal_bound}{Proposition~\ref{prop:formal_bound}}
\theoremstyle{definition}
\newtheorem{defi}[Theorem]{Definition}
\newtheorem{rem}[Theorem]{Remark}
\newtheorem{ex}[Theorem]{Example}
\newtheorem{strat}[Theorem]{Strategy}
\definecolor{amaranth}{rgb}{0.9, 0.17, 0.31} 
\definecolor{carrotorange}{rgb}{0.93, 0.57, 0.13} 
\definecolor{citrine}{rgb}{0.89, 0.82, 0.04} 
\definecolor{dartmouthgreen}{rgb}{0.05, 0.5, 0.06} 
\definecolor{ballblue}{rgb}{0.13, 0.67, 0.8} 
\definecolor{ceruleanblue}{rgb}{0.16, 0.32, 0.75} 
\definecolor{amethyst}{rgb}{0.6, 0.4, 0.8} 
\definecolor{amber}{rgb}{1.0, 0.75, 0.0} 
\definecolor{burlywood}{rgb}{0.87, 0.72, 0.53} 
\numberwithin{equation}{section}
\begin{document}
	
	
	\title{Quasi-alternating surgeries} 
	
	\author{Kenneth L. Baker}
	\address{Department of Mathematics, University of Miami, Coral Gables, FL 33146, USA}
	\email{k.baker@math.miami.edu}
	
\author{Marc Kegel}
\address{Universidad de Sevilla, Dpto.\ de Álgebra,
Avda.\ Reina Mercedes s/n,
41012 Sevilla}
\email{mkegel@us.es, kegelmarc87@gmail.com}
	
	\author{Duncan McCoy}
	\address{D\'{e}partment de Math\'{e}matiques, Universit\'{e} du Qu\'{e}bec \`{a} Montr\'{e}al, Canada}
	\email{mc\_coy.duncan@uqam.ca}
	
	
	\date{\today} 
	
\begin{abstract}
	In this article, we explore phenomena relating to quasi-alternating surgeries on knots, where a quasi-alternating surgery on a knot is a Dehn surgery yielding the double branched cover of a quasi-alternating link. Since the double branched cover of a quasi-alternating link is an L-space, quasi-alternating surgeries are special examples of L-space surgeries.
		
	We show that all SnapPy census L-space knots admit quasi-alternating surgeries except for the knots $t09847$ and $o9\_30634$, neither of which have any quasi-alternating surgeries. In particular, this finishes Dunfield's classification of the L-space knots among all SnapPy census knots. In addition, we show that all asymmetric census L-space knots have exactly two quasi-alternating slopes and that these are consecutive integers. Similar behavior is observed for some of the Baker--Luecke asymmetric L-space knots.

    We also classify the quasi-alternating surgeries on torus knots and show that the set of formal L-space slopes is either empty or infinite This allows us to give examples of asymmetric formal L-spaces.
\end{abstract}

\keywords{L-space knots, quasi-alternating knots, exceptional surgeries, symmetry-exceptional surgeries} 
	
	\makeatletter
	\@namedef{subjclassname@2020}{%
		\textup{2020} Mathematics Subject Classification}
	\makeatother
	
	\subjclass[2020]{57K10; 57R65, 57R58, 57K16, 57K14, 57K32, 57M12} 
	
	
	\maketitle
	
	\tableofcontents
	
\section{Introduction}
\addtocontents{toc}{\protect\setcounter{tocdepth}{1}}
A knot $K$ in $S^3$ is called an \textit{L-space knot} if $K$ admits a Dehn surgery to a Heegaard Floer L-space~\cite{OS05}.\footnote{A $3$-manifold $M$ is an L-space if its Heegaard Floer homology groups fulfill $\rk(\widehat{\HF}(M))=\chi(\widehat{\HF}(M))$, cf.~\cite{kotelskiy2022thin}.}
		
All known examples of L-space knots in $S^3$ admit a non-trivial Dehn surgery to an L-space that is the double branched cover of a link in $S^3$. Indeed, the first known examples of asymmetric\footnote{Recall that a $3$-manifold is called asymmetric if its mapping class group is trivial. A knot is called asymmetric, if its exterior is asymmetric.} L-space knots in $S^3$ were shown to be L-space knots because they admitted a Dehn surgery to the double branched cover of an alternating link \cite{BL17}. It is thus natural that one should wish to study when surgeries on L-space knots are a double branched cover of a link in $S^3$ and, given such a surgery, what we can say about the branching sets.
	
Let $K$ be a knot in $S^3$, $r\in \Q$ be any non-trivial slope, and $K(r)$ be the result of Dehn surgery on $K$ with slope $r$. 
The slope $r$ is an \textit{L-space slope} or a \textit{dbc slope} if $K(r)$ is an L-space or the double branched cover of a link in $S^3$, respectively.  
Similarly, the slope $r$ is an \textit{alternating slope}, a \textit{quasi-alternating slope}, or a \textit{thin slope} if $K(r)$ is the double branched cover of a non-trivial link in $S^3$ that is alternating, quasi-alternating, or Khovanov thin, respectively. See Section~\ref{sec:definitions} for the definitions of these terms.
Using this language, the proper containments of links
	\begin{quote}
		\{non-split alternating\} $\subsetneq$ \{quasi-alternating\} $\subsetneq$ \{Khovanov thin\}
	\end{quote}
yield the following implications for a slope of a non-trivial knot:
	\begin{quote}
	 alternating\footnote{The double branched cover of a split link is reducible with an $S^1 \times S^2$ summand. However,
Property R~\cite{Ga87} implies that only the unknot has such a surgery among knots in $S^3$.}
  $\implies$ quasi-alternating $\implies$ thin $\implies$ dbc $\cap$ L-space\footnote{It follows from~\cite{OS05} that the double branched cover of a thin knot is an L-space, see for example Theorem 4.5 in~\cite{Marengon_thesis}.}
	\end{quote}
    
In particular, any knot with an alternating, quasi-alternating, or thin slope is an L-space knot.

\medskip
Here we concentrate on quasi-alternating surgeries, since alternating surgeries are studied extensively in \cite{BKM_alt}. Quasi-alternating surgeries are significantly harder to understand and classify than alternating surgeries, which are amenable to study via the use of changemaker lattices \cite{Gr13,GreeneLRP,Gr14,Greene2015genusbounds,Gi15,Mc15,Mc16,McCoyAltUnknotting,Mc17}.

In particular,  we develop methods in \S \ref{sec:strategies} to search for quasi-alternating slopes on hyperbolic knots and use them in \S \ref{sec:asym_census} and \S \ref{sec:sym_census} to determine which knots in the SnapPy census admit a quasi-alternating surgery.  Notably, this enables the completion of the classification of L-space knots in the census, see \S \ref{sec:classification}.  We overview these results in \S \ref{subsec:QAsurgeries-overview}.

Our methods work particularly well for asymmetric hyperbolic knots.   We classify the quasi-alternating slopes of the asymmetric census knots in \S \ref{sec:asym_clasification_dbc} and explore the quasi-alternating slopes of the asymmetric Baker--Luecke knots in \S\ref{sec:Baker_Luecke}.  These results, overviewed in \S\S \ref{subsec:dbcslopeasymmcensus-overview}--\ref{subsec:BLknot-overview}, suggest that potentially any asymmetric hyperbolic L-space knot has exactly two quasi-alternating surgeries and their slopes are consecutive integers, see Question~\ref{ques:asym}.

\medskip
 
We further examine surgeries yielding formal L-spaces in \S \ref{sec:FLSsurgeries}.
Introduced in \cite{Greene2016Strong}, formal L-spaces are the natural 3-manifold analogues of quasi-alternating links, see \S\ref{sec:formal}. Indeed, the double branched cover of a quasi-alternating link is always a formal L-space.

Aligned with our language above, a slope $r$ of a knot $K$ is a \textit{formal L-space slope} if $K(r)$ is a formal L-space.  This yields the following implications for a slope of a non-trivial knot:
	\begin{quotation}
	 alternating
  \hspace{-.1cm}$\implies$\hspace{-.1cm} quasi-alternating \hspace{-.1cm}$\implies$\hspace{-.1cm} formal L-space \hspace{-.1cm}$\implies$\hspace{-.1cm} L-space
	\end{quotation}
As overviewed in \S \ref{subsec:formalLspace-overview}, we will see that in general the set of formal L-space slopes for a knot is distinct from the sets of quasi-alternating slopes, thin slopes, and L-space slopes.

\subsection{Existence of quasi-alternating surgeries on the census knots}\label{subsec:QAsurgeries-overview}
This article grew from an exploration of the alternating and quasi-alternating slopes of Dunfield's \textit{census knots}, the $1267$ hyperbolic knots in $S^3$ with complements in the SnapPy census of hyperbolic cusped manifolds that can be triangulated with at most nine ideal tetrahedra \cite{Du18}. Almost half of the census knots are L-space knots~\cite{Du19}, and  they provide a good testing ground for properties of L-space knots cf.~\cite{ABG+19,BKM_alt,BKM_thin,CensusKnotInvariants}.

More precisely: Among the $1267$ census knots, Dunfield found that $630$ are L-space knots and $635$ are not L-space knots, leaving $2$ with their L-space status undetermined. These are the knots $o9\_30150$ and $o9\_31440$. 
By demonstrating that they both admit quasi-alternating surgeries, we confirm that these two knots are L-space knots and thereby complete Dunfield's classification of the L-space knots among the census knots.

\begin{thm:2uncertain}
The knots $o9\_30150$ and $o9\_31440$ both have a quasi-alternating surgery. Hence they are L-space knots.  
\end{thm:2uncertain}

Of these census L-space knots, $9$ are asymmetric and the remaining $623$ have a strong involution as their only symmetry.  
 
Our main result shows that almost every census L-space knot admits at least one quasi-alternating slope.
	
\begin{thm:QAsurgeries}[Abbreviated]
	Every census L-space knot admits a quasi-alternating surgery, except for the knots $t09847$ and $o9\_30634$ which do not admit any quasi-alter\-nating surgeries.
\end{thm:QAsurgeries}

The full statement of the theorem points out that many of the census L-space knots have surgeries to Seifert fibered spaces that are double branched covers of quasi-alter\-nating Montesinos links.
Table~\ref{tab:QATable} displays a quasi-alternating surgery for each census L-space knot that does not admit an alternating slope or a quasi-alternating Seifert fibered slope. For the census L-space knots with an alternating surgery, we refer to~\cite{BKM_alt}. The Seifert fibered surgeries can be found at~\cite{Du18} or~\cite{BKM}. 

\subsection{Classification of dbc slopes of the asymmetric L-space census knots}\label{subsec:dbcslopeasymmcensus-overview}
For a hyperbolic knot $K$, a slope $r$ is an \textit{exceptional slope} if $K(r)$ is non-hyperbolic and a \textit{symmetry-exceptional slope} if $K(r)$ is hyperbolic and the symmetry group of $K(r)$ does not inject into the symmetry group of the complement of $K$.  
Work of Futer--Purcell--Schleimer \cite{FPS19} enables the computation of a finite list of slopes for a knot $K$ that contains any exceptional or symmetry-exceptional slope, see Corollary~\ref{cor:computableexceptionalsymmetryslopes}.  Since a dbc slope for an asymmetric knot must be an exceptional or symmetry-exceptional slope, this significantly whittles down the search for quasi-alternating slopes of the asymmetric census L-space knots.  We determine the symmetry-exceptional slopes for the asymmetric census L-space knots in Theorem~\ref{thm:exceptionalsym} and present them in Table~\ref{tab:symmetric}. Ultimately this yields:
	
\begin{thm:asym_QA}
	Each asymmetric census L-space knot
	admits exactly two dbc slopes, 
	these two dbc slopes are consecutive integers, 
	and each of these dbc slopes is quasi-alternating but not alternating.
	These slopes are presented in Tables~\ref{tab:symInfo1} and~\ref{tab:symInfo2}.
\end{thm:asym_QA}

In particular, this answers Question 12 from~\cite{ABG+19}.

For the remaining $623$ census L-space knots that are strongly invertible, the classification of quasi-alternating slopes is not so forthcoming as the set of their slopes that are dbc and L-space is infinite.  Indeed, every slope on a strongly invertible knot is a dbc slope, and for an L-space knot $K$, mirrored if needed to make it a {positive} L-space knot, the set of L-space slopes is given by $[2g(K)-1,\infty)$ \cite{genus_L_spaces}.  

\subsection{The Baker--Luecke asymmetric L-space knots}\label{subsec:BLknot-overview}
It seems surprising that every asymmetric census knot has exactly two dbc slopes, and these slopes are consecutive integers and both quasi-alternating. One might wonder if this is a general phenomenon, see Question~\ref{ques:asym}. Other asymmetric L-space knots were con\-structed in~\cite{BL17}. In~\cite{BKM_alt} we have shown that the simplest $14$ of these knots have exactly two alternating slopes and they are consecutive integers. Here we generalize these results as follows.

\begin{thm:BLknots}[Abbreviated]
		For $6$ of the $14$ simplest asymmetric hyperbolic L-space knots from~\cite{BL17}, the set of dbc slopes agrees with the set of alternating slopes and consists of two consecutive integers. For the other $8$ knots, the set of dbc slopes contains two consecutive integers which are both alternating slopes and we identify an explicit finite set of further slopes that might be dbc slopes.
	\end{thm:BLknots}
 
\subsection{The classification of quasi-alternating slopes on torus knots}	\label{subsec:torusknotQA-overview}
Torus knots punctuate the discrepancy between quasi-alternating slopes and thin slopes.
Using results of Issa~\cite{Is18}, we classify the set of quasi-alternating slopes of torus knots.
	\begin{thm:torus_knotsQA}
		A slope $p/q\in\Q$ is quasi-alternating for the torus knot $T_{a,b}$ with $a>b>0$ if and only if
  \begin{equation*}
			\frac{p}{q}>ab-\max\left\{\frac{a}{m},\frac{b}{n}\right\},
		\end{equation*}
  where integers $m,n$ satisfy $bm+an=ab+1$ and $1\leq m<a$ and $1\leq n <b$. 
\end{thm:torus_knotsQA}

\begin{ex}  \label{ex:torus}
	Using Theorem~\ref{thm:torus_knotsQA} with  $a=5$, $b=3$, and choosing $m=2$, 
 one finds that a slope $p/q$ for $T_{5,3}$ is quasi-alternating if and only if $p/q>12+1/2$.  
	However, $12+1/2$ is a thin slope:
	$T_{5,3}(12+1/2)$ is the double branched cover of the knot $K11n50$ which is thin but not quasi-alternating~\cite{Greene_thin_QA}. The classification of thin slopes remains open, see Example \ref{ex:torus_thin} and Question~\ref{ques:thin_torus}.
\end{ex}

\subsection{Formal L-space surgeries}\label{subsec:formalLspace-overview} Since the double branched cover of a quasi-alternating link is an example of a formal L-space (see Section~\ref{sec:formal} for the definition), it is natural to compare the formal L-spaces surgeries to the quasi-alternating surgeries. It turns out that the set of formal L-space slopes is always either empty or infinite. This is in contrast to both the quasi-alternating and the alternating slopes which may be non-empty and finite.

\begin{thm:plusone}
    If $K(r)$ is a formal L-space for some $r>0$, then $K(r +n)$ is a formal L-space for every $n\in \Z_{\geq 0}$.
\end{thm:plusone}

This allows us to exhibit the first known examples of formal L-spaces that are not the double branched cover of a quasi-alternating link. 

\begin{cor:asymmetric_formal_L_spaces}
    There exist infinitely many asymmetric formal L-spaces. In particular, there are infinitely many formal L-spaces that are not the double-branched cover of any link in $S^3$.
\end{cor:asymmetric_formal_L_spaces}

\begin{rem}
    In the proof of Corollary~\ref{cor:asymmetric_formal_L_spaces} we will see that there exist infinitely many formal asymmetric L-spaces that arise by surgery on a knot. In particular, there exist formal L-space slopes that are not thin slopes. On the other hand, the slope $12+1/2$ is thin for $T_{5,3}$ as discussed in Example~\ref{ex:torus}, but in~\cite{Greene_thin_QA} it is shown that this slope is not a formal L-space slope. Thus thin slopes and formal L-space slopes are in general different.
\end{rem}

In the presence of integral formal L-space slopes, we have a stronger result.

\begin{thm:formal_integer}
    If $K(n)$ is a formal L-space for some $n\in \Z_{>0}$, then $K(r)$ is a formal L-space for every $r\geq n$.
\end{thm:formal_integer}

In general, calculating the full set of formal L-space slopes seems challenging. However, for torus knots, it turns out to coincide with the set of quasi-alternating surgeries.

\begin{cor:formal_torus}
    For positive torus knots, the set of formal L-space surgeries agrees with the set of quasi-alternating surgeries given in Theorem~\ref{thm:torus_knotsQA}.
\end{cor:formal_torus}

Recall from \cite{OS05b} that if $K$ is an L-space knot, then $K(r)$ is an L-space if and only if $r\geq 2g(K)-1$. Thus the following result implies that every non-trivial L-space knot admits L-space slopes that are not formal L-space slopes. In particular, there is no non-trivial L-space knot for which every L-space surgery is a quasi-alternating surgery.

\begin{prop:formal_bound}
    Let $K$ be a non-trivial knot and $r>0$ be a formal L-space surgery. Then
    \[ r > 2g(K) + \frac{1}{2}\left(\sqrt{1+8g(K)}-1\right).\]
\end{prop:formal_bound}

\addtocontents{toc}{\protect\setcounter{tocdepth}{2}}

\bigskip

\begin{center}$\bullet$\end{center}

\bigskip

\subsection*{Conventions for slopes}
In general, a slope is an unoriented essential simple closed curve in a torus.  With respect to an oriented basis $\alpha,\beta$ for the first homology of the torus, a slope corresponds to a rational number $p/q \in \Q \cup \infty$ or the pair $(p,q)$ for coprime integers $p,q$ if it is homologous to $p\alpha+q\beta$ for some choice of orientation on the curve.

Surgery slopes of a knot $K$ in $S^3$ are slopes in $\bdry \nbhd(K)$,  the boundary torus of a tubular neighborhood of $K$.  Usually, these are measured with respect to the \textit{$S^3$-basis} $\mu,\lambda$ where $\mu$ denotes the meridian of $K$ and $\lambda$ the Seifert longitude of $K$.  (An orientation chosen on $K$ orients $\mu$ and $\lambda$ so that 
$\mu$ links $K$ positively once and the pair $(\mu,\lambda)$ yields the positive orientation of $\bdry \nbhd(K)$.)

Slopes in a cusp of a hyperbolic manifold are often measured with respect to the \textit{geometric basis} of the cusp given by the two shortest slopes.  

SnapPy uses the geometric basis for cusps of census manifolds (and other hyperbolic manifolds without an alternative basis being implicit).  For census knots,  tabulated links,  and manifolds obtained from link diagrams, SnapPy uses the $S^3$-basis. We will follow this convention and use different slope indices according to how the knot/link is described.
For example, the complement of the five-crossing twist knot $K5a1$ (in the Hoste--Thistlethwaite notation) is isometric to the census manifold $m015$ while the slope $p/q$ for $K5a1$ is the slope $p/(q-2p)$ for $m015$.

To give surgery descriptions of quotients of orientation
preserving involutions of manifolds, we use the Hoste--Thistlethwaite--Weeks tabulation of knots and links \cite{HTW} (in short: the HTW table) as recorded in SnapPy which gives an ordering to the link components. As in SnapPy, surgery information is presented in a list-without-commas of ordered pairs $(p_i, q_i)$ indicating $p_i/q_i$ surgery on the $i$th component (with respect to the $S^3$-basis).  Here the ordered pair $(0,0)$ is taken to mean that no surgery is performed on the component and it forms part of the branching set.  For example, $L11n350(0,0)(-4,1)(0,0)$ describes a link of two components arising from the first and third component of $L11n350$ in the manifold obtained by $-4$ surgery on the second component.
When no surgery information is given, the link is just the link in $S^3$.
	
\subsection*{Code and data}
The code and additional data accompanying this paper can be accessed at~\cite{BKM}.	
	
\subsection*{Acknowledgments}
We thank Dave Futer, Liam Watson, and Claudius Zibrowius for useful discussions, explanations, and remarks. For the computations performed in this paper, we use and combine KLO (Knot like objects)~\cite{Sw}, KnotJob~\cite{KnotJob}, the Mathematica knot theory package~\cite{knotatlas}, Regina~\cite{BBP+}, sage~\cite{sagemath}, SnapPy \cite{CDGW}, and code developed by Dunfield in~\cite{Du18,Du19}. We thank the creators for making these programs publicly available. We are also grateful to the three anonymous referees for careful reading, useful suggestions, and comments.

KLB was partially supported by the Simons Foundation grant \#523883 and gift \#962034.

MK was supported by the SFB/TRR 191 \textit{Symplectic Structures in Geometry, Algebra and Dynamics}, funded by the DFG, German Research Foundation (Projektnummer 281071066 - TRR 191), by the DFG (Project: 561898308); by a Ram\'on y Cajal grant (RYC2023-043251-I) and by the project PID2024-157173NB-I00 funded by MCIN/AEI/10.13039/501100011033, ESF+ and FEDER, EU; and by a VII Plan Propio de Investigación y Transferencia (SOL2025-36103) of the University of Sevilla.

DM is supported by NSERC and a Canada Research Chair.
\section{Prerequisites}	
	
\subsection{Terminology}
\label{sec:definitions}
In this section, we briefly explain our conventions and terminology.
	
\subsubsection{Tangle exteriors}
Here, a \textit{tangle exterior} $T$ of a knot $K$ in $S^3$ is a  two-strand tangle in a $3$-ball whose double branched cover is the knot exterior $S^3\cut \nbhd(K)$. Then, via the Montesinos Trick \cite{Montesinos_trick}, the Dehn surgery $K(r)$ on $K$ by any slope $r$ is diffeomorphic to the double branched cover $\dbc(T(r))$ of the tangle filling $T(r)$ of $T$ by a rational tangle of slope $r$. (The parametrization of slopes for the rational tangle filling of the tangle exterior is inherited from the parametrization of slopes for the Dehn filling of the knot exterior, see for example~\cite{Watson_KH_sym2017} for more details.)

\subsection{Thin links}
A link $L$ is called \textit{thin}, if the Khovanov homology $Kh^{h,q}(L)$ (with $\Z_2$-coefficients) of $L$ is supported in two diagonal gradings $\delta = q - 2h$. 
While the Khovanov homology of a link depends on the orientation of its components, the thinness status of a link is independent of the orientations of the link components. This follows, for example, from Proposition 28 in~\cite{Khovanov_homology}. A link that is not thin is called \textit{thick}.
	
\subsubsection{Quasi-alternating links}
The set of \textit{quasi-alternating links} $QA$ is the smallest set of links such that
\begin{enumerate}
    \item the unknot is in $QA$, and
    \item if a link $L$ has a crossing $c$ in a diagram $D$ such that both smoothings $D_0$ and $D_1$ at $c$ represent links $L_0$ and $L_1$ in $QA$ with $\det(L_0)+\det(L_1)=\det(L)$, then $L$ is also in $QA$.
\end{enumerate}
From the definition, it is straightforward to see that any non-split alternating link is quasi-alternating. Furthermore, any non-split quasi-alternating link is Khovanov thin (as well as knot Floer homology thin), implying that its double branched cover is an L-space~\cite{OS05,QA_thin}. On the other hand, there exist quasi-alternating knots that are not alternating~\cite{KnotInfo}, thin knots that are not quasi-alternating~\cite{Greene_thin_QA}, cf.~\cite{GW13}, and thick knots whose double branched cover is an L-space, see for example~\cite{Is18}.\footnote{As pointed out by a referee, a simple example of a thick know whose double branched cover is an L-space is given by the torus knot $T_{3,5}$.}

\subsubsection{Formal L-spaces}\label{sec:formal}

A \textit{triad} is a triple of closed, oriented 3-manifolds $(Y_0, Y_1, Y_2)$ such that there exists
 a 3-manifold $M$ with torus boundary and a collection of slopes $\gamma_0, \gamma_1, \gamma_2$ in $\partial M$ of
pairwise distance one such that $Y_i$
is the result of Dehn filling $M$ along $\gamma_i$.
\begin{defi}[\cite{Greene2016Strong}]
    The set of \textit{formal L-spaces} $\F$ is the smallest set of rational homology 3-spheres such that 
\begin{enumerate}
    \item $S^3\in \F$ and
    \item whenever $(Y, Y_0, Y_1)$ is a triad with $Y_0\in \F$, $Y_1\in \F$ and
    \[
    |H_1(Y;\Z)|=|H_1(Y_0;\Z)|+|H_1(Y_1;\Z)|
    \]
    we have $Y\in \F$.
\end{enumerate}
\end{defi}

\begin{rem}\label{rem:formal_Lspace_QA}
     Formal L-spaces can be seen as the $3$-manifold analog of quasi-alternating links. In both cases, one obtains the corresponding class by starting from the simplest object (the unknot in the link setting and $S^3$ in the 3-manifold
setting) and then closing under a recursive operation governed by additivity of 
determinant or homology in a triad. For quasi-alternating
links, the skein triple $(L,L_0,L_1)$ at a crossing plays the role of a triad with the determinant relation $\det(L)=\det(L_0)+\det(L_1)$ mirroring
the homological relation
\[
|H_1(Y;\mathbb{Z})| = |H_1(Y_0;\mathbb{Z})| + |H_1(Y_1;\mathbb{Z})|
\]
in a Dehn filling triad. Since the determinant of a link gives the order of the first homology of its double branched cover it follows that the double branched cover of a quasi-
alternating link is a formal L-space.
 Thus, the definition of formal L-spaces
can be viewed as a natural 3-manifold analogue of quasi-alternating
links.
\end{rem}
	
\subsection{Symmetry groups of hyperbolic knots}\label{sec:symmetry_groups}
Let $M$ be a $3$-manifold. The \textit{symmetry group} of $M$ is defined to be the mapping class group of $M$, i.e.\ the group of orientation-preserving diffeomorphisms of $M$ that fix the boundary pointwise up to isotopy that fixes the boundary pointwise. If $M$ is hyperbolic then the symmetry group of $M$ is isomorphic to its isometry group~\cite{Gabai_DiffMCG,Gabai_DiffMCG2}. The symmetry group of a hyperbolic knot (i.e.\ the symmetry group of its complement) is known to be a subgroup of a dihedral group~\cite{HTW_table} and thus it is $\Z_n$, $D_n$, or $\Z_2^2$.
 
An orientation-preserving involution on $S^3$ is either the antipodal map, which has no fixed set, or a $\pi$-rotation about the fixed set of a great circle \cite{Waldhausen_involutions}. 
A \textit{free involution} on a knot $K$ is an involution of $(S^3,K)$ that acts without fixed points on $K$.  These come in two types depending on whether or not the action has fixed points in $S^3$.
A \textit{strong inversion} on a knot $K$ is an involution of $(S^3, K)$ with a circle fixed set that meets $K$ in two points.  
In both situations, $K$ has a tubular neighborhood that is equivariant.  
Observe that the quotient of a strong involution of $K$, restricted to the exterior of an equivariant tubular neighborhood of $K$, is a tangle exterior for $K$~\cite{Montesinos_trick}. 
	
\begin{lem}\label{lem:involutions_of_knot1}
If the complement of a hyperbolic knot $K$ admits an orientation-preserving involution $f$, then $f$ extends to an involution of $(S^3,K)$.
\end{lem}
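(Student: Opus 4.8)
The plan is to reduce $f$ to an isometric involution, read off its effect on the boundary torus, and then extend it over a tubular neighborhood of $K$ by an explicit formula. Write $M=S^3\cut\nbhd(K)$ for the knot exterior, so that $f$ is an orientation-preserving involution of $M$. Because $M$ is hyperbolic, its mapping class group is its (finite) isometry group, and $f$ has order at most two there; so after an isotopy I may and do assume $f$ is an isometric involution of $\mathrm{int}\,M$ (thus I prove the statement for this representative of the isotopy class of $f$, which is all that is needed). An isometry permutes the horospherical cross-sections of the cusp, so taking $\nbhd(K)$ to be a horoball neighborhood makes $\partial M=\partial\nbhd(K)$ a flat torus on which $g:=f|_{\partial M}$ is an orientation-preserving isometric involution. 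The orientation-preserving isometry group of a flat torus $\R^2/\Lambda$ is $T^2\rtimes C$ with $C$ finite cyclic, and its only elements of order at most two have linear part $I$ or $-I$; hence, fixing a basis $\mu,\lambda'$ of $\Lambda$ with $\mu$ the meridian of $K$ and writing $\partial M=\R^2/\Z^2$ in this basis,
\[
g(\theta,\phi)=(\varepsilon\theta+a,\ \varepsilon\phi+b),\qquad \varepsilon\in\{\pm 1\},
\]
where $g^2=\mathrm{id}$ forces $2a,2b\in\Z$ when $\varepsilon=1$ and imposes no condition when $\varepsilon=-1$. In particular $g$ preserves the meridian slope $\mu$. (The same conclusion is visible without geometry: any orientation-preserving involution of $T^2$ acts on $H_1$ as an order-$\le 2$ element of $SL_2(\Z)$, which must be $\pm I$, so it preserves every slope; this is the structural reason a symmetry of a knot exterior ``sees'' the meridian.)

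Now realize $\nbhd(K)=D^2\times S^1$ with $D^2=\{re^{2\pi i\theta}\}$, $S^1=\{e^{2\pi i\phi}\}$, core $K=\{0\}\times S^1$, and $\partial\nbhd(K)$ identified with $\R^2/\Z^2$ as above, so that $\partial D^2\times\{*\}$ is the meridian $\mu$. Define
\[
\widehat f\colon\ \nbhd(K)\longrightarrow\nbhd(K),\qquad
\widehat f\bigl(re^{2\pi i\theta},e^{2\pi i\phi}\bigr)=\bigl(re^{2\pi i(\varepsilon\theta+a)},\ e^{2\pi i(\varepsilon\phi+b)}\bigr).
\]
This is well defined — at $r=0$ the formula does not depend on $\theta$ precisely because the linear part of $g$ is $\pm I$ and not a meridional twist — it restricts to $g$ on $\partial\nbhd(K)$, it preserves $K$, and $\widehat f^{\,2}=\mathrm{id}$ (when $\varepsilon=1$ because $2a,2b\in\Z$; when $\varepsilon=-1$ directly). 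It is orientation-preserving: for $\varepsilon=1$ it is a product of rotations, and for $\varepsilon=-1$ it is a product of two orientation-reversing reflections, hence orientation-preserving on the $3$-manifold $\nbhd(K)$. Gluing $\widehat f$ to $f$ along $\partial M=\partial\nbhd(K)$ therefore yields an orientation-preserving involution of $S^3$ carrying $K$ to itself, i.e.\ an involution of $(S^3,K)$ extending $f$.

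The lemma is not deep; the content sits in the two reductions: that $f$ may be taken isometric, so that $g=f|_{\partial M}$ is a flat isometry and thus ``standard'', and that such a $g$ automatically fixes the meridian slope — the single piece of data one needs in order to reglue the solid torus equivariantly. The remaining work is just the bookkeeping of the extension formula and checking that it is an orientation-preserving involution; the only spot calling for any care is the verification that the linear part of $g$ is exactly $\pm I$, which is what makes the naive extension across the core $K$ well defined.
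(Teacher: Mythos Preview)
Your proof is correct, and it takes a genuinely different route from the paper's. The paper argues that $f_*$ preserves the longitude (because it is the unique null-homologous slope in $M_K$) and preserves the meridian by invoking the Gordon--Luecke knot complement theorem; together with orientation these force $f_*=\pm I$ on $H_1(\partial M)$, after which the paper simply asserts the extension. You instead observe that since $f$ is an \emph{involution}, $g=f|_{\partial M}$ already acts on $H_1(T^2)$ as an order-$\le 2$ element of $SL_2(\Z)$, and such elements are exactly $\pm I$; no appeal to Gordon--Luecke is needed. You then make the extension step explicit, isotoping $f$ to an isometry so that $g$ is affine and writing down the involutive extension over the solid torus by hand.

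What each approach buys: the paper's argument that $f_*=\pm I$ on $H_1(\partial M)$ applies to \emph{any} self-diffeomorphism of a knot exterior (not just involutions), at the cost of the deep Gordon--Luecke input; your $SL_2(\Z)$ observation is elementary and self-contained but uses the order-two hypothesis in an essential way. Conversely, your explicit extension formula and its verification ($\widehat f^{\,2}=\mathrm{id}$, orientation) fills in a step the paper leaves to the reader. One small remark: your reduction ``after an isotopy assume $f$ is isometric'' really yields the conclusion for the isometric representative, as you note; if you want the literal statement for the original smooth involution $f$, use that $f$ is \emph{conjugate} (by a diffeomorphism $h$ isotopic to the identity) to an isometry, and then $H\Phi H^{-1}$ is an involutive extension of $f$ once $\Phi$ extends the isometry and $H$ is any extension of $h$ across the solid torus. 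In the paper's context, where symmetries are taken up to isotopy, your version already suffices.
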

	
\begin{proof}
Let $f$ be an orientation-preserving involution on the exterior $M_K=S^3\cut\nbhd(K)$ of $K$.  Then $f$ must preserve the slope of the longitude of $K$ in $\bdry \nbhd(K)$: $f(\lambda) = \pm \lambda$.  Furthermore, due to the solution of the knot complement problem~\cite{GL89}, $f$ must also preserve the slope of the meridian of $M$: $f(\mu)=\pm\mu$. Since $f$ is orientation-preserving on $M_K$, the restriction of $f$ to $\bdry\nbhd(K)$ is either the identity $I$ or its negation $-I$.  In both cases, this map $f$ extends to an involution of $S^3$ that preserves $K$. The former gives a free involution while the latter gives a strong inversion.
\end{proof}

\subsection{Double branched covers}\label{sec:DBC}
Let $M$ be a closed $3$-manifold with a smooth orientation-preserving involution $\varphi\colon M\rightarrow M$. 
We can take the quotient under $\varphi$, i.e.\ we identify points $p$ and $\varphi(p)$. Then the quotient map $M\rightarrow M/\varphi$ will be a (branched or unbranched) $2$-fold covering map with (possibly empty) branching set $B_\varphi$. For this, we also write $M=\operatorname{DBC}(B_\varphi)$. It will be an unbranched covering if and only if the involution acts freely. The following lemma says that the quotient manifold and its branching set only depend on the isotopy class of $\varphi$ in the mapping class group of $M$.

\begin{lem}\label{lem:dbc_lemma}
Suppose $M$ is a closed hyperbolic $3$-manifold with two smooth orienta\-tion-preserving involutions $\varphi_1,\varphi_2\colon M\rightarrow M$ that are isotopic as diffeomorphisms. Then $(M/\varphi_1,B_{\varphi_1})$ is diffeomorphic to $(M/\varphi_2,B_{\varphi_2})$.
\end{lem}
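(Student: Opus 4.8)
The plan is to reduce the statement to Mostow rigidity in the form ``$\mathrm{Isom}=\mathrm{MCG}$'' by first replacing the smooth involutions with isometries. We may assume $\varphi_1$ and $\varphi_2$ are nontrivial, since otherwise the conclusion is immediate. The first step is to invoke the equivariant geometrization of finite group actions on closed hyperbolic $3$-manifolds --- via Thurston's orbifold theorem applied to the quotient orbifold $M/\varphi_i$ (which is good, being doubly covered by the hyperbolic manifold $M$), or via Dinkelbach--Leeb's equivariant Ricci flow --- to produce orientation-preserving diffeomorphisms $g_1,g_2\colon M\to M$ such that $\alpha:=g_1\varphi_1 g_1^{-1}$ and $\beta:=g_2\varphi_2 g_2^{-1}$ are isometric involutions of $M$ with respect to its hyperbolic metric (which is unique up to isometry). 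Since $g_i$ conjugates the $\Z/2$-action generated by $\varphi_i$ to the one generated by $\alpha$, respectively $\beta$, it carries $\mathrm{Fix}(\varphi_i)$ to $\mathrm{Fix}(\alpha)$, respectively $\mathrm{Fix}(\beta)$, and thus descends to a diffeomorphism of pairs $(M/\varphi_1,B_{\varphi_1})\cong(M/\alpha,B_\alpha)$ and $(M/\varphi_2,B_{\varphi_2})\cong(M/\beta,B_\beta)$. It therefore remains to identify $(M/\alpha,B_\alpha)$ with $(M/\beta,B_\beta)$.

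For this I would feed the hypothesis $\varphi_1\simeq\varphi_2$ through the conjugations: setting $k:=g_1 g_2^{-1}$, one gets $\alpha=g_1\varphi_1 g_1^{-1}\simeq g_1\varphi_2 g_1^{-1}=k\beta k^{-1}$, that is, $[\alpha]=[k][\beta][k]^{-1}$ in the mapping class group $\mathrm{MCG}(M)$. Now I would use the fact recalled in \S\ref{sec:symmetry_groups} that for a closed hyperbolic $3$-manifold the natural map $\mathrm{Isom}(M)\to\mathrm{MCG}(M)$ is an isomorphism: let $\iota$ be the isometry in the isotopy class $[k]$. Then $\iota\beta\iota^{-1}$ is an isometry whose mapping class equals $[k][\beta][k]^{-1}=[\alpha]$, so by injectivity of $\mathrm{Isom}(M)\to\mathrm{MCG}(M)$ we obtain the genuine equality $\iota\beta\iota^{-1}=\alpha$. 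Hence $\iota$ intertwines the two isometric involutions on the nose, so it descends to a diffeomorphism (indeed an orbifold isometry) $(M/\beta,B_\beta)\to(M/\alpha,B_\alpha)$ matching branching sets. Composing the three diffeomorphisms of pairs yields the desired identification $(M/\varphi_1,B_{\varphi_1})\cong(M/\varphi_2,B_{\varphi_2})$.

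The only substantial content is the first step; the rest is bookkeeping. Routine points to check are: that $\alpha$ and $\beta$ have order exactly two, and that, since a nontrivial isometry of a closed hyperbolic manifold is never isotopic to the identity, nontriviality of $\varphi_1$ indeed forces $\varphi_2$ nontrivial and forces $\alpha,\beta\neq\mathrm{id}$; that an orientation-preserving diffeomorphism of $M$ equivariant for two involutions descends to the branched quotients and sends branching set to branching set (standard, since the branching set is the image of the fixed-point set under the double branched covering map); and that the conjugating diffeomorphisms supplied by equivariant geometrization may be taken orientation-preserving, so that $[k]$ lies in $\mathrm{MCG}(M)$. The step I expect to be the main obstacle is the appeal to equivariant geometrization: one must know that the smooth $\Z/2$-action is conjugate to an isometric one, which is exactly where the deep $3$-manifold input (the orbifold theorem, or equivariant Ricci flow) enters --- everything downstream is elementary group theory together with Mostow rigidity.
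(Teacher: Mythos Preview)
Your proof is correct and follows essentially the same route as the paper's: both arguments use orbifold geometrization to upgrade the smooth involutions to isometric ones, and then Mostow rigidity (in the form $\mathrm{Isom}(M)\to\mathrm{MCG}(M)$ is an isomorphism) to show the two isometric involutions are conjugate. The only cosmetic difference is packaging: the paper makes each $\varphi_i$ an isometry for its own hyperbolic metric $g_i$ and then invokes Mostow to produce the conjugating isometry directly, whereas you first conjugate both involutions into the isometry group of a single fixed metric and then argue conjugacy there; your version is arguably cleaner in that it avoids juggling two metrics, and you spell out several routine checks the paper leaves implicit.
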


\begin{proof}
  We see $M/\varphi_i$ as an orbifold $\mathcal{O}_i$. Then the quotient map $M\rightarrow \mathcal O_i$ is a $2$-fold orbifold covering map. Since $M$ is hyperbolic, the orbifold universal covering of $\mathcal O_i$ is hyperbolic, and thus $\mathcal O_i$ is irreducible and atoroidal. Then the orbifold geometrization theorem~\cite[Corollary~1.2]{BLP_annals} 
  implies that $\mathcal O_i$ is hyperbolic and the deck transformation $\varphi_i$ is an isometry for some hyperbolic metric $g_i$ on $M$. Now Mostow rigidity implies that there exists an isometry $f\colon (M,g_1)\rightarrow (M,g_2)$ such that $f^{-1}\varphi_2 f=\varphi_1$. But then $\mathcal O_1\ni [p]\mapsto [\varphi_2f(p)]\in\mathcal O_2$ is a diffeomorphism respecting the branching sets.
\end{proof}
	
In this work, the only non-trivial symmetry groups of closed hyperbolic $3$-ma\-ni\-folds that we need to consider are the symmetry groups $\Z_2$ and $\Z_2^2$.  
In the former case, there is a unique involution.
In the latter case, the symmetry group is generated by two different involutions $f$ and $g$. The involution $f\circ g$ will always be different from $f$ and $g$ since it represents a different element in the symmetry group. (However, $g\circ f$ is isotopic to $f\circ g$.) 
If we present our closed hyperbolic $3$-manifold via a surgery diagram, we can often explicitly determine each involution via symmetries of the surgery link and then explicitly determine the corresponding quotient and branch set there.  For a more detailed discussion see~\cite{Au14}. 
	
\subsection{Length of geodesics after surgery and symmetry-exceptional slopes}\label{sec:bound}
In this section, building upon \cite{FPS19}, we will give an explicit bound $C$ such that any surgery on a hyperbolic knot along a slope whose length is larger than $C$ yields again a hyperbolic manifold with the same symmetry group as the knot. We first set up the notation.
	
Let $M$ be a complete hyperbolic $3$-manifold with finite volume and one cusp. We denote by $\operatorname{systole}(M)$ the length of the shortest geodesic of $M$. 
There is a \textit{(normalized) length} $L(r)$ associated to each slope $r$ on $\bdry M$. This is obtained by taking a horospherical neighbourhood $N$ of the cusp with the natural inherited Euclidean metric on $N$ and setting
\begin{equation*}
    L(r)=\frac{\operatorname{length}(r)}{\sqrt{\operatorname{area}(\partial N)}}.
\end{equation*}
where $\operatorname{length}(r)$ is the minimal length of all curves representing $r$ on $\partial N$. Since changing the horospherical neighbourhood $N$ has the effect of rescaling the Euclidean metric on $\partial N$, this $L(r)$ is independent of the choice of $N$.

We denote by $M(r)$ the Dehn filling of $M$ along the slope $r$.  If the slope $r$ is in the hyperbolic Dehn surgery space (so that $M(r)$ has a hyperbolic structure obtained from deforming the one on $M$) then we write $\gamma$ for the geodesic core of the newly glued-in solid torus in $M(r)$. From Thurston's hyperbolic Dehn surgery theorem~\cite{Th79}, it follows that $\gamma$ will be the shortest geodesic in $M(r)$ provided that $L(r)$ is sufficiently large. 
	
The following theorem, which is a slight reformulation of Theorem~7.28 in~\cite{FPS19}, hands us an explicit such bound on the length of the slope.
	
\begin{thm}[Futer--Purcell--Schleimer~\cite{FPS19}]\label{thm:FPS}
If $C(M)$ is given by
\begin{equation*}
C(M)= \max\left\{10.1\,,\,\sqrt{\frac{2\pi}{\operatorname{systole}(M)}+58}\right\},
\end{equation*}
then for any surgery slope $r$ with
\begin{equation*}
L(r) \geq C(M)
\end{equation*}
the geodesic core $\gamma$ of the filling solid torus is the shortest geodesic in $M(r)$. \qed
\end{thm}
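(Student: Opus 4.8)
The plan is to obtain this statement as an effective repackaging of the drilling-and-filling estimates underlying \cite[Theorem~7.28]{FPS19}, by playing an upper bound on the length of the core $\gamma$ against a lower bound on the length of every competing closed geodesic in $M(r)$. First I would note that the hypothesis $L(r)\geq C(M)\geq 10.1$ places $r$ well inside the hyperbolic Dehn surgery space, so that $M(r)$ is hyperbolic and the core geodesic $\gamma$ is genuinely defined; the value $10.1$ is precisely the threshold beyond which the Futer--Purcell--Schleimer bilipschitz comparison between the drilled manifold $M$ and $M(r)$ (away from $\gamma$) is valid. With hyperbolicity in hand, the first estimate is an upper bound on the core: the effective Neumann--Zagier/Hodgson--Kerckhoff asymptotics made explicit in \cite{FPS19} give a bound of the shape $\operatorname{length}_{M(r)}(\gamma)\leq \tfrac{2\pi}{L(r)^2}\,(1+\varepsilon(L(r)))$ with $\varepsilon\to 0$ as $L(r)\to\infty$, so that $\gamma$ becomes arbitrarily short as the normalized length grows. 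This dominant term $2\pi/L(r)^2$ is exactly what produces the summand $2\pi/\operatorname{systole}(M)$ inside $C(M)^2$.

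The second, and more delicate, estimate is a lower bound on every other closed geodesic $\delta\neq\gamma$ in $M(r)$. Any such $\delta$ is freely homotopic into the image of the thick part of $M\setminus\gamma$, on which the bilipschitz map to $M$ has constant close to $1$; hence $\operatorname{length}_{M(r)}(\delta)$ is bounded below by a definite fraction of $\operatorname{length}_M(\delta')\geq\operatorname{systole}(M)$ for the corresponding geodesic $\delta'$ in $M$. The one scenario to rule out is a geodesic that dips into the Margulis tube surrounding $\gamma$, where the bilipschitz control degenerates: here I would argue that such a curve is forced to be long, since once $L(r)$ is large the tube is deep, so it too cannot undercut $\operatorname{systole}(M)$. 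Either way, every competitor has length bounded below by essentially $\operatorname{systole}(M)$.

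Combining the two estimates, the core is strictly shortest precisely when $\tfrac{2\pi}{L(r)^2}(1+\varepsilon)<c\cdot\operatorname{systole}(M)$ for the relevant bilipschitz factor $c$; solving this inequality for $L(r)^2$ and absorbing all the error terms $\varepsilon$ together with the bilipschitz constants into a single additive constant yields the clean threshold $L(r)^2\geq \tfrac{2\pi}{\operatorname{systole}(M)}+58$, which together with the hyperbolicity threshold $10.1$ gives $C(M)$. The main obstacle is exactly this bookkeeping of explicit constants: extracting the numbers $10.1$ and $58$ from the estimates of \cite{FPS19} requires simultaneously tracking the tube radius, the bilipschitz constant, and the Neumann--Zagier error term, and then verifying that the worst-case competitor (a geodesic skirting the edge of the Margulis tube) still loses to the core. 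Conceptually the comparison is transparent, core length $\sim 2\pi/L(r)^2$ against $\operatorname{systole}(M)$, but the effective form lives or dies on making each of these bounds uniform and additive.
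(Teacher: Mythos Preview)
The paper does not prove this theorem. It is stated as a quotation, explicitly introduced as ``a slight reformulation of Theorem~7.28 in~\cite{FPS19}'' and closed with a bare \qed; no argument is given or intended. So there is nothing in the paper to compare your proposal against.

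Your sketch is a reasonable account of the mechanism behind the FPS result, bounding the core length from above by roughly $2\pi/L(r)^2$ and bounding every other geodesic from below via the bilipschitz comparison with $M$, and it correctly identifies that the constants $10.1$ and $58$ are artifacts of the explicit error control in \cite{FPS19}. But as presented it is a plausibility outline rather than a proof: the step where a competing geodesic that enters the Margulis tube about $\gamma$ is ``forced to be long'' is exactly where the work lies, and you have only asserted it. If you actually want to supply a proof here rather than a citation, you would need to invoke the specific effective statements from \cite{FPS19} (their cosmetic surgery and bilipschitz theorems) with their numbered constants, not just the qualitative shape. For the purposes of this paper, though, the correct move is simply to cite the result, as the authors do.
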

	
Note that $L(r)\geq C(M)>6$, and thus the manifold $M(r)$ is necessarily hyperbolic by~\cite{Ag00, La00}, cf.~\cite{BH96}. Note that the $6$-theorem is in terms of non-normalized length, so we are implicitly using the fact that there always exists a cusp of area at least one, see Theorem~1.2 in~\cite{gabai2021hyperbolic}.
	
We immediately get the following corollary about symmetry-exceptional surgery slopes for knots.
	
\begin{cor}\label{cor:slope}
Let $M$ be the complement of a hyperbolic knot. If $r$ is a surgery slope such that $L(r)\geq C(M)$ where $C(M)$ is the constant from Theorem~\ref{thm:FPS}, then every symmetry of $K(r)$ restricts to a symmetry of the complement of $K$. In particular, if $r$ is a symmetry-exceptional slope, then $L(r) < C(M)$.
\end{cor}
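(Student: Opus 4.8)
The plan is to derive the statement from Theorem~\ref{thm:FPS} together with Mostow rigidity. Assume $L(r)\ge C(M)$. As observed immediately after Theorem~\ref{thm:FPS}, $K(r)=M(r)$ is then hyperbolic, and Theorem~\ref{thm:FPS} furnishes a closed geodesic $\gamma$, the core of the filling solid torus, which is the \emph{unique} shortest geodesic of $M(r)$: its length is strictly smaller than that of every other closed geodesic, this strict comparison being exactly what the normalized-length threshold in the Futer--Purcell--Schleimer estimate provides. Everything below rests on the canonicity of $\gamma$.

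Let $\phi$ be a symmetry of $K(r)$. Since $M(r)$ is closed and hyperbolic, its symmetry group is its isometry group and every mapping class contains a single isometry~\cite{Gabai_DiffMCG,Gabai_DiffMCG2}, so we may assume $\phi\in\mathrm{Isom}^{+}(M(r))$. An isometry carries closed geodesics to closed geodesics of the same length, hence fixes the unique shortest one: $\phi(\gamma)=\gamma$. Consequently $\phi$ preserves the maximal embedded tube $N$ about $\gamma$, which is an isometry invariant of the pair $(M(r),\gamma)$, and therefore restricts to a self-diffeomorphism of $W:=M(r)\setminus\mathrm{int}(N)$; this restriction is orientation-preserving because $\phi$ preserves the orientations of $M(r)$ and of $N$.

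Next I would identify $W$ with the exterior of $K$. By Thurston's hyperbolic Dehn surgery theorem~\cite{Th79}, $\gamma$ is isotopic in $M(r)$ to the core $c$ of the filling solid torus $V$ in the surgery decomposition $M(r)=M\cup_{\partial M}V$; removing an open tubular neighborhood of $c$ from $V$ leaves a copy of $T^{2}\times I$, so $W$ is diffeomorphic to $M$. Fixing such a diffeomorphism once and for all, $\phi\mapsto\phi|_{W}$ defines a homomorphism from the symmetry group of $K(r)$ to the symmetry group of the complement of $K$; it is well defined (and multiplicative) because each isotopy class in $\mathrm{Isom}(M(r))$ has a unique isometry representative and these representatives all preserve $N$. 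This proves that every symmetry of $K(r)$ restricts to a symmetry of the complement of $K$. For the ``in particular'', suppose $\phi|_{W}$ is isotopic to $\mathrm{id}_{M}$; then $\phi_{*}$ is trivial up to conjugation on $\pi_{1}(W)=\pi_{1}(M)$, and since Dehn filling gives a surjection $\pi_{1}(M)\twoheadrightarrow\pi_{1}(M(r))$, also $\phi_{*}$ is trivial up to conjugation on $\pi_{1}(M(r))$. As $M(r)$ is closed hyperbolic, Mostow rigidity identifies $\mathrm{Isom}(M(r))$ with a subgroup of $\mathrm{Out}(\pi_{1}(M(r)))$, so $\phi$ is the identity isometry, hence the trivial symmetry. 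Thus the symmetry group of $K(r)$ injects into that of the complement of $K$, and contrapositively a symmetry-exceptional slope must satisfy $L(r)<C(M)$.

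The only genuine subtlety is the one flagged at the outset: one needs $\gamma$ to be the \emph{unique} shortest geodesic, so that $\gamma$ and its Margulis tube are truly canonical and hence fixed setwise by every isometry. Granting that, which is precisely what Theorem~\ref{thm:FPS} buys above the threshold, the remaining steps are routine bookkeeping with Mostow rigidity and with the structure of Dehn filling (together with the standard fact that an orientation-preserving self-diffeomorphism of a hyperbolic knot complement represents a symmetry in the sense of \S\ref{sec:symmetry_groups}).
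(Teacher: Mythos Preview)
Your proof is correct and follows essentially the same route as the paper: use Theorem~\ref{thm:FPS} to conclude that the filling core $\gamma$ is the shortest geodesic of $M(r)$, hence preserved by every isometry, so each symmetry restricts to the complement $M$. You are in fact more careful than the paper on two points it leaves implicit---the uniqueness of $\gamma$ among shortest geodesics (needed so that isometries must fix it setwise) and the injectivity of the restriction map (which you verify via $\pi_1$ and Mostow rigidity)---but the underlying argument is the same.
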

	
\begin{proof}
When $L(r)\geq C(M)$, Theorem~\ref{thm:FPS} says that (in addition to $M(r)$ being the hyperbolic manifold obtained by deforming the hyperbolic structure on $M$)  the core $\gamma$ of the newly glued-in solid torus is the shortest geodesic in $M(r)$. It follows that any element in the symmetry group of $M(r)$ has to send $\gamma$ to itself and thus restricts to a symmetry of the knot complement $M$. 
\end{proof}
	
Since there are only finitely many slopes in a cusp with length smaller than a given constant, we immediately have:
\begin{cor}\label{cor:computableexceptionalsymmetryslopes}
Let $M$ be the complement of a hyperbolic knot.  There is a computable finite list of slopes along which a Dehn surgery may produce a manifold with more symmetries than the knot. \qed
\end{cor}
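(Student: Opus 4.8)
The plan is to upgrade the finiteness in Corollary~\ref{cor:slope} to effectivity. By that corollary, if $r$ is a slope such that $K(r)$ carries a symmetry not restricting from a symmetry of $M$ --- in particular, if $r$ is an exceptional or symmetry-exceptional slope witnessing extra symmetry --- then $L(r) < C(M)$, where $C(M)=\max\{10.1,\sqrt{2\pi/\operatorname{systole}(M)+58}\}$ depends only on $\operatorname{systole}(M)$. Hence the list $\Sigma(M):=\{\, r : L(r) < C(M)\,\}$ contains every slope along which surgery could possibly produce more symmetry than the knot, and it remains to argue that (a) $\Sigma(M)$ is finite and (b) it can be computed.

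For finiteness I would normalize the maximal cusp of $M$ to have area $1$, so that its cross-section is a Euclidean torus $\R^2/\Lambda$ with $\operatorname{covol}(\Lambda)=1$; a slope then corresponds to a primitive vector $v\in\Lambda$ up to sign, and the normalized length $L(r)$ is exactly the Euclidean norm $\lVert v\rVert$. Since $\lVert\cdot\rVert^2$ is a positive-definite quadratic form on $\Lambda\cong\Z^2$, the set of $v\in\Lambda$ with $\lVert v\rVert < C(M)$ is the intersection of a bounded ellipse with a lattice, hence finite; written in any fixed basis of $\Lambda$, such vectors have both coordinates bounded in absolute value by $C(M)$ times an explicit constant governed by the shortest vector of $\Lambda$, so they --- and therefore the slopes of $\Sigma(M)$ --- can be listed by a finite search once the Gram matrix of the length form is known.

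The one substantive point is rigour: both an upper bound on $C(M)$ (equivalently a lower bound on $\operatorname{systole}(M)$) and the cusp shape needed in step (b) must be obtained with certified, not merely floating-point, precision, or else some short slope could be missed. Here I would invoke the fact that the complete hyperbolic structure on a cusped $3$-manifold is algorithmically computable with rigorous error bounds: solving Thurston's gluing equations by interval arithmetic (as in SnapPy's verification routines) yields certified enclosures for the tetrahedron shapes, hence for the cusp translations and cusp area, hence a certified Gram matrix for the length form; and a certified lower bound on $\operatorname{systole}(M)$, hence a certified $C_0\geq C(M)$, can be produced by the rigorous short-geodesic methods underlying length-based censuses of hyperbolic $3$-manifolds. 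Feeding $C_0$ into the lattice-point search above then outputs $\Sigma(M)$, or a slightly larger finite list, which is equally acceptable.

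I expect the only genuine obstacle to be this last bookkeeping about verified computation rather than anything geometric: one has to be careful that the systole bound and the cusp shape are \emph{certified}, so that the enumeration of short slopes is provably complete. Granting the availability of such certified computations --- which is by now standard --- the corollary follows immediately from Corollary~\ref{cor:slope}.
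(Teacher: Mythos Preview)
Your proposal is correct and follows essentially the same approach as the paper: the paper's proof is a one-line observation that by Corollary~\ref{cor:slope} every problematic slope satisfies $L(r)<C(M)$ and that only finitely many slopes lie below any fixed length bound. Your version simply fleshes out the finiteness via the lattice-point count and addresses the effectivity of computing $C(M)$ and the cusp shape with certified arithmetic, which the paper leaves implicit.
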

	
In particular, when classifying the quasi-alternating slopes for any asymmetric knot,  we only need to check the finite list of slopes given by Corollary~\ref{cor:computableexceptionalsymmetryslopes}. 
	
\section{Algorithms and Strategies} \label{sec:strategies}
	
In this section, we discuss certain strategies and algorithms that we use later to prove our main theorems. We will apply these strategies to certain hyperbolic L-space knots whose symmetry groups are either trivial or generated by a single strong inversion. It is conjectured that this covers all symmetries of hyperbolic L-space knots \cite[Conjecture 1.7]{L_space_periodic_conj}. Thus we restrict the strategies to such knots whenever that simplifies the discussion. 
	
 \begin{strat}[Exceptional slopes]\label{strat:exceptional_slopes}
Classify the exceptional slopes and their fillings of a hyperbolic knot $K$.
\begin{enumerate}
\item Using the cusp shape of $K$, make a list of short slopes of length at most $6$. Every exceptional slope is contained in this finite list by the 6-Theorem \cite{Ag00, La00}.
\item For each short slope $r$: 
\begin{enumerate}
\item Check with SnapPy if a hyperbolic structure is found on $K(r)$. If so, the slope $r$ is not exceptional.
\item If no hyperbolic structure is found, check with Regina and Dunfield's recognition code~\cite{Du18} for classification of non-hyperbolic type. If this recognizes $K(r)$ as non-hyperbolic, $r$ is an exceptional slope.  
\item If neither succeeds, leave the slope $r$ with undetermined exceptional status. 
\end{enumerate}
\end{enumerate}
\end{strat}
	
\begin{rem}
Dunfield employs such a strategy to classify exceptional slopes of census knots \cite{Du18}. Since it is algorithmically decidable if a manifold is hyperbolic~\cite{decide_hyperbolic}, this strategy yields an algorithm that is always guaranteed to terminate. 
\end{rem}
	
\begin{strat}[Symmetry-exceptional slopes]
\label{strat:symmetryexceptional_slopes}
Classify the symmetry-exceptional slopes of a hyperbolic knot $K$.
\begin{enumerate}
\item Compute the symmetry group of $K$.
\item Using the cusp shape and systole computations of $K$, make a list of short slopes of length less than the FPS bound, defined in Theorem~\ref{thm:FPS}. By Corollary~\ref{cor:slope} this finite list contains all symmetry-exceptional slopes.
\item Use Strategy~\ref{strat:exceptional_slopes} to identify the exceptional slopes. 
\item For each remaining short slope $r$:
\begin{enumerate}
\item Check if SnapPy finds a hyperbolic structure on $K(r)$ and gives a verified computation of the symmetry group.
\item If so, compare with the symmetry group of $K$ to determine if $K$ is symmetry-exceptional or not.
\item If not, leave the slope $r$ with undetermined symmetry-exceptional status. 
\end{enumerate}
\end{enumerate}
\end{strat}

\begin{rem}\label{rem:sym}
This strategy is not theoretically always guaranteed to terminate. The problem is that there is no algorithm known that is guaranteed to compute the symmetry group of a hyperbolic manifold. However, in practice, the methods implemented in SnapPy usually terminate and yield verified computations of symmetry groups. 
\end{rem}
	
\begin{strat}[Branch set search]
\label{strat:branchlocussearch}
Given a closed oriented hyperbolic manifold $Y$, search for all presentations of $Y$ as the double cover of a manifold $M$ branched over a link $L$ in $M$.	
\begin{enumerate}
    \item Using the symmetry group of $Y$, compute the number $n$ of non-isotopic orientation-preserving involutions of $Y$. By Lemma~\ref{lem:dbc_lemma}, $Y$ can be written in exactly $n$ different ways as double branched cover over a manifold $M$ branched along a link $B$ in $M$.
    \item Start with an empty list  $\mathcal{SD}(Y)$ of surgery descriptions for links $B$ in a $3$-manifold $M$ whose double branched cover is $Y$.
    \item Iterate through links in 3-manifolds by taking links in the HTW table and performing Dehn fillings on the components, where some subset of the components receive the trivial $1/0$ filling, measured with respect to the Seifert longitude in $S^3$. This gives a pair $(M,B)$ where $M$ is the surgered manifold and $B$ is the link in $M$ formed by the components with $1/0$ filling.  
Take all double covers of $M$ branched along $B$.\footnote{Note that there might be more than one or no double branched cover depending on the algebraic topology of the link complement.}
For each cover $C$:
    \begin{enumerate}
    \item Check if $C$ is diffeomorphic to $Y$ and if $(M,B)$ is not contained in $\mathcal{SD}(Y)$. 
    \item If so, add the surgery description for $(M,B)$ to $\mathcal{SD}(Y)$.   
    \end{enumerate}	
    \item Stop when $\mathcal{SD}(Y)$ contains $n$ distinct pairs $(M,B)$.
\end{enumerate}		 
\end{strat}

\begin{rem}
    As explained in Remark~\ref{rem:sym}, Step $(1)$ of this algorithm is theoretically not always guaranteed to terminate (since it rests on symmetry group computations) but works well in practice and yields verified results. On the other hand, the rest of the algorithm is theoretically verified to terminate since the decision problem for $3$-manifolds is solved~\cite{Kuperberg_Decision_prob}. But in practice, some of the branching sets may be too complicated to find in a reasonable time. Nevertheless, in most of our examples, the branching sets were sufficiently simple that Strategy~\ref{strat:branchlocussearch} terminated relatively quickly.

    Note that Step $(3)$ and $(4)$ will in theory always terminate and return all branching sets, since every link $B$ in a $3$-manifold $M$ admits a surgery description along a link in $S^3$. However, in practice we iterate only through links in the HTW table with a small subset of chosen surgery coefficients.
\end{rem}
			
\begin{strat}[Tangle exterior search]
\label{strat:tangle_ext}
Search for a tangle exterior of a strongly invertible knot $K$.
\begin{enumerate}
	\item Using the branch set search \ref{strat:branchlocussearch}, identify for some integer $n\in\Z$ a triple of links $B_n$, $B_{n+1}$ and $B_{n+1/2}$ in $S^3$ such that 
	\begin{align*}
	   \dbc(B_n)=K(n), \,  \dbc(B_{n+1})=K(n+1), \text{and}\, \dbc(B_{n+1/2})=K(n+1/2).
	\end{align*}
	\item Choose a (reduced) diagram $D$ of $B=B_{n+1/2}$.
	\item For each crossing $c$ in the diagram $D$:
	\begin{enumerate}
		\item Check if the two smoothings of $D$ at $c$ are a pair of links with exteriors diffeomorphic to those of $B_n$ and $B_{n+1}$. 
		\item If so, check if changing the crossing at $c$ unknots $B_{n+1/2}$ using the unknot recognition algorithm~\cite{BBP+}. If this is the case, we expect that the exterior of $c$ is the tangle exterior of $K$, since four of its tangle fillings give the correct branching sets.
		\item Verify that the exterior of the crossing $c$ is a tangle exterior of $K$ as follows:
		\begin{enumerate}
			\item Let $\delta$ be a crossing disk for the crossing $c$, i.e.\ an embedded disk $\delta \subset S^{3}$ transversally intersecting the link $B$ at exactly two points in its interior that projects to an arc in the diagram plane meeting $D$ only at the crossing $c$.
			Form the double branched cover $N$ of $S^3-\bdry \delta$ along $B$. 
            \item The Conway sphere $\bdry N(\delta)$ lifts to a torus $F$ in $N$ that bounds a knot manifold to one side and a pair-of-pants $\times S^1$ to the other side. SnapPy~\cite{CDGW} can search for the splitting torus $F$ in $N$ as a normal surface and perform operations on it.
			\item Split $N$ along $F$ and take the piece $N_1$ with one boundary component.   
			\item Check if $N_1$ is isometric to the exterior of $K$. If so, the exterior of the crossing $c$ in the diagram $D$ is the desired tangle exterior.
		\end{enumerate}
	\end{enumerate}
\end{enumerate}
\end{strat}
			
\begin{rem} \hfill    
\begin{enumerate}
	\item The brute force approach of Strategy~\ref{strat:tangle_ext} is not guaranteed to succeed and can certainly be improved in speed and general applicability. For example we could try different integers $n$ in Step $(1)$ or different diagrams $D$ in Step $(2)$. Nevertheless, Strategy~\ref{strat:tangle_ext} works efficiently and successfully to identify a tangle exterior for each strongly invertible census knot. Moreover, because the branch set search (Strategy~\ref{strat:branchlocussearch})  identifies branch loci with low crossing number, the tangle exteriors that Strategy~\ref{strat:tangle_ext} identify also have low crossing number and their integral rational tangle fillings remain rather simple.  
	\item There is an algorithm to identify a tangle exterior for a strongly invertible knot, though it is impractical.  Fix an enumeration by crossing number of all reduced diagrams of 2-strand tangle exteriors, and then inductively check if their double branched covers are diffeomorphic to the exterior of the knot at hand.   This last step uses the solution to the decision problem for diffeomorphisms of 3-manifolds with torus boundary \cite{Kuperberg_Decision_prob}.
\end{enumerate}
\end{rem}

\begin{strat}[DBC slopes]
\label{strat:dbc_slopes}
For a hyperbolic knot $K$ in $S^3$ with trivial symmetry group or symmetry group comprising a single strong involution determine which slopes are dbc slopes and find all their possible branching sets.
\begin{enumerate}
    \item If $K$ is strongly invertible, then every slope is a dbc slope by the Montesinos trick~\cite{Montesinos_trick}. For getting all branching sets we run the tangle exterior search~\ref{strat:tangle_ext} to find the tangle exterior $T_K$ of $K$. Then a branching set of $K(r)$ is given by the rational tangle filling $T_K(r)$.
    \item Any remaining dbc slope of $K$ must be exceptional or symmetry-exceptional. Compute these slopes with Strategies~\ref{strat:exceptional_slopes} and~\ref{strat:symmetryexceptional_slopes}.
    \begin{enumerate}
        \item If $r$ is exceptional, then use either a Seifert fibred structure on $K(r)$ or the Equivariant Torus Theorem~\cite{equivarianttorustheorem} for ad hoc determination. Further discussion on how this can be done in practice can be found in Section 4.1 in~\cite{BKM_thin}.
        \item If $r$ is symmetry-exceptional, perform the branch set search~\ref{strat:branchlocussearch} to account for every involution in $\mathrm{Sym}(K(r))$. (Eg.\ if the involutions generate a $\Z_2^2$ subgroup, find three distinct branch loci.)
    \end{enumerate}
\end{enumerate}
\end{strat}

\begin{rem}
    Strategy~\ref{strat:dbc_slopes} is not verified to terminate since it rests on symmetry group computations, see Remark~\ref{rem:sym}.
\end{rem}
			
\begin{strat}[Quasi-alternating link status]
\label{strat:checkiflinkisQA}
Strategy to check if a link $L$ is quasi-alternating.
				
Begin with lists $\mathcal{QA}$ of known quasi-alternating and $\mathcal{NQA}$ of known non-quasi-alternating links.  Initially, we use Jablan's classification of prime quasi-alternating knots with at most 12 crossings \cite{Ja14} to construct these two lists.   Each time this strategy is run, these lists grow.
\begin{enumerate}
	\item Check if $L$ belongs to either $\mathcal{QA}$ or $\mathcal{NQA}$.
	\item If not, compute the Khovanov homology of $L$.  If it is not thin, then add $L$ to $\mathcal{NQA}$ and stop.
	\item If $L$ is thin, 
	\begin{enumerate}
		\item Search for a diagram $D$ of $L$ with small crossing number.
		\item For each crossing of $D$:
		\begin{enumerate}
			\item Check if the two smoothings at the crossing have determinants that add to the determinant of $D$.
			\item If so, recursively check if both smoothings belong to $\mathcal{QA}$.
			\item If so, add $L$ to $\mathcal{QA}$ and stop.
		\end{enumerate}
	\end{enumerate}
\end{enumerate}
\end{strat}

\begin{rem}
    Since there exist links that are thin but not quasi-alternating~\cite{Greene_thin_QA}, Strategy~\ref{strat:checkiflinkisQA} will in general not terminate. In fact, it remains unknown if it is algorithmically decidable if a link is quasi-alternating or not, see Question~\ref{ques:QA_decidable}. The same holds true for Strategy~\ref{strat:singleQA_slope} below.
\end{rem}
			
\begin{strat}[Single quasi-alternating slope]
\label{strat:singleQA_slope}
Strategy to find a single quasi-alternating slope for a hyperbolic knot $K$ in $S^3$.  
				
Other knowledge about $K$ may rule out the existence of quasi-alternating slopes.
\begin{itemize}
	\item If $K$ is known to not be an L-space knot, then it has no quasi-alternating slopes.  
	\item If $K$ has no Khovanov thin slopes, then it has no quasi-alternating slope.
\end{itemize}
Otherwise:
\begin{enumerate}
	\item Find all dbc slopes $r$ using Strategy~\ref{strat:dbc_slopes}.  
	\item List those with $|r|\geq 2g(K)-1$. (One may use the span of the Alexander polynomial of $K$ in place of $2g(K)$ since they are equal for L-space knots.)
	\item For each slope $r$ on this list:
	\begin{enumerate}
		\item Use Strategy~\ref{strat:branchlocussearch} to search for a branch set $L$ of $K(r)$.
		\item Use Strategy~\ref{strat:checkiflinkisQA} to attempt to identify $L$ as quasi-alternating.
	\end{enumerate} 
\end{enumerate}
\end{strat}
   
\section{The asymmetric census knots}\label{sec:asym_census}	
Among the SnapPy census L-space knots, there are exactly nine knots that are not strongly invertible. These are the asymmetric knots $\calA$ studied in~\cite{ABG+19}:
	\begin{align*}
	\mathcal A = \{t12533, t12681, o9\_38928, o9\_39162, o9\_40363,\\
	o9\_40487, o9\_40504, o9\_40582, o9\_42675\}.
	\end{align*}
			
In this section, we will classify all symmetry-exceptional, dbc, thin, and quasi-alternating slopes of the knots in $\mathcal A$.

Recall, that for the census manifolds we display slopes in the geometric basis as used by SnapPy. From the homology of the fillings it is straightforward to express the geometric basis in terms of the Seifert basis. This is displayed in Table~\ref{tab:slopes}.

\begin{table}[htbp] 
\caption{For each asymmetric census L-space knot we display the relation between the slopes $r_S$ measured with respect to the Seifert basis $(\mu_S,\lambda_S)$ and the slopes $r_G$ measured with respect to the geometric basis $(\mu_G,\lambda_G)$. For all census L-space knots it turns out that $\mu_S=\mu_G$.}
\label{tab:slopes}
\small{
\begin{tabular}{@{}lclclc@{}}
\toprule
knot	  & slope & knot	  & slope & knot	  & slope \\
\midrule
$t12533$ & $r_S=r_G + 37 $ &
$t12681$ &  $r_S=r_G + 61$ &
$o9\_38928$ &   $r_S=r_G + 50$ \\
$o9\_39162$ &  $r_S=r_G + 65$ &
$o9\_40363$ &   $r_S=r_G -83 $ &
$o9\_40487$ &  $r_S=r_G + 38$ \\
$o9\_40504$ &   $r_S=r_G - 58$ &
$o9\_40582$ &   $r_S=r_G +44 $ &
$o9\_42675$ &   $r_S=r_G + 46$ \\
\bottomrule
\end{tabular}
}
\end{table}

\subsection{Symmetry-exceptional slopes for the asymmetric census knots} \label{sec:symmetryexceptionalclassification}
			
In Theorem~\ref{thm:exceptionalsym} below, we classify all symmetry-exceptional slopes of the asymmetric L-space knots in~$\calA$. This will then be used for the proof of Theorem~\ref{thm:asym_QA} in Section~\ref{sec:asym_clasification_dbc} which classifies the quasi-alternating slopes of the asymmetric census knots.
			
\begin{thm} \label{thm:exceptionalsym}
    All exceptional and symmetry-exceptional slopes of the knots in $\calA$ are recorded in Table~\ref{tab:symmetric}. Only three slopes are exceptional, each yielding a small Seifert fibered space. The remaining symmetry-exceptional slopes yield hyperbolic manifolds with symmetry group $\Z_2$ or $\Z_2^2$.
\end{thm}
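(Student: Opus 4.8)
The plan is to execute Strategies~\ref{strat:exceptional_slopes} and~\ref{strat:symmetryexceptional_slopes} for each of the nine knots in $\calA$ in turn, relying at each stage on the verified computations provided by SnapPy, Regina, and Dunfield's recognition code. First I would fix one knot $K\in\calA$ and extract from its hyperbolic structure in SnapPy the cusp shape and the systole of $M_K=S^3\cut\nbhd(K)$. From the systole I compute the constant $C(M_K)$ of Theorem~\ref{thm:FPS}, and from the cusp shape I enumerate the finitely many slopes $r$ in $\bdry\nbhd(K)$ with normalized length $L(r)<C(M_K)$; by Corollary~\ref{cor:slope} this finite list contains every exceptional and every symmetry-exceptional slope of $K$. (In practice this list is short, since $C(M_K)$ is a modest constant and the census knots have small cusp area.)

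Next, for each candidate slope $r$ on the list I would run step (2) of Strategy~\ref{strat:exceptional_slopes}: ask SnapPy for a verified hyperbolic structure on $K(r)$; if none is found, feed $K(r)$ to Regina and Dunfield's recognition code to identify it as a Seifert fibered space (or otherwise non-hyperbolic). The assertion ``only three slopes are exceptional, each yielding a small Seifert fibered space'' is then simply the output of this computation across all nine knots, together with reading off the Seifert invariants. For the remaining (hyperbolic-filling) slopes I would invoke Strategy~\ref{strat:symmetryexceptional_slopes}: obtain from SnapPy a verified computation of the isometry group of $K(r)$, and compare it with the symmetry group of $K$ — which is trivial, since $K\in\calA$. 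Thus $r$ is symmetry-exceptional precisely when $\mathrm{Sym}(K(r))\neq 1$, and for those slopes SnapPy's verified output records whether the group is $\Z_2$ or $\Z_2^2$ (the only possibilities arising; one should also check $\Z_n$ for $n\geq 3$ does not occur, which SnapPy confirms). Recording all of this data produces Table~\ref{tab:symmetric}.

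The main obstacle is the reliability and termination of the symmetry-group computations in step (2)(b) of Strategy~\ref{strat:symmetryexceptional_slopes}: as noted in Remark~\ref{rem:sym}, there is no algorithm guaranteed to compute the isometry group of a closed hyperbolic $3$-manifold, so in principle some $K(r)$ on the finite list could resist classification. In practice, though, for the nine census knots in $\calA$ the filled manifolds $K(r)$ are simple enough that SnapPy returns verified isometry-group computations for every short slope, so the list in Table~\ref{tab:symmetric} is complete and rigorous. A secondary, purely bookkeeping, point is to make sure the slope parametrizations are handled consistently — translating between SnapPy's geometric basis for census cusps and the $S^3$-basis $\mu,\lambda$ as described in the Conventions for slopes — so that the slopes reported in the table are the genuine surgery slopes of $K$.
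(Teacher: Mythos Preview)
Your proposal is correct and follows essentially the same approach as the paper: applying Strategy~\ref{strat:symmetryexceptional_slopes} (which subsumes Strategy~\ref{strat:exceptional_slopes}) to each knot in $\calA$, using the FPS bound to reduce to a finite list of candidate slopes and then verified SnapPy computations to determine which are exceptional or symmetry-exceptional. The paper's own proof is terser only because it cites Dunfield's prior classification~\cite{Du18} of all non-hyperbolic fillings of census knots rather than recomputing the exceptional slopes from scratch, but otherwise the argument is the same.
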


\begin{proof}
Dunfield has already determined all non-hyperbolic filling slopes for knots in the SnapPy census~\cite{Du18}.
We apply Strategy~\ref{strat:symmetryexceptional_slopes} to compute the symmetry-exceptional slopes of each knot in $\calA$. 
\end{proof}
   	
\begin{table}[htbp] 
\caption{The non-trivial symmetry-exceptional slopes of the asymmetric L-space knots from $\calA$ are shown in SnapPy's basis. They are categorized according to the symmetry group of the filled hyperbolic manifold or the type of non-hyperbolic filling.}
\label{tab:symmetric}
\small{
\begin{tabular}{@{}llcc@{}}
\toprule
knot	  & {$\Z_2$-symmetric} & {$\Z_2^2$-symmetric} & Seifert fibered\\
\midrule
$t12533$ & $(0,1), (1,1),(-1,1), (-2,1),(-1,2),(-3,1)$ &\phantom{$-$}{-}  &{-} \\
$t12681$ & $(-1,1),(-1,2), (-1, 3), (-2, 3)$ &\phantom{$-$}$(1,1)$&$(0,1)$\\
$o9\_38928$ &  $(0,1),(1,1),(-1,1),(-2,1)$ & \phantom{$-$}$(2,1)$&{-} \\
$o9\_39162$ & $(1,1),(-1,1),(2,1),(1, 2),(1, 4)$ &\phantom{$-$}{-} &$(0,1)$\\
$o9\_40363$ &   $(-1,1),(-1, 2),(-1, 3),(-3, 4)$ & \phantom{$-$}$(1,1)$&$(0,1)$\\
$o9\_40487$ & $(0,1),(1,1),(-1,1),(1, 2),(2,1),(3,1),(-2,1)$ &\phantom{$-$}{-} &{-} \\
$o9\_40504$ &  $(0,1),(1,1),(-1,1),(-2,1)$ &\phantom{$-$}$(2,1)$&{-} \\
$o9\_40582$ &  $(0,1),(1,1),(-1,1), (1,2),(2,1),(4,1)$ &\phantom{$-$}{-} &{-} \\
$o9\_42675$ &  $(0,1),(1,1),(-1,1),(2,1)$  &$(-2,1)$ &- \\
\bottomrule
\end{tabular}
}
\end{table}
			
\subsection{Quasi-alternating slopes of the asymmetric census knots}
\label{sec:asym_clasification_dbc}
			
Using Theorem~\ref{thm:exceptionalsym}, we classify all dbc, thin, and quasi-alternating slopes of the asymmetric census knots.
			
\begin{thm}\label{thm:asym_QA}
	Each knot in $\calA$ admits exactly two dbc slopes, these two dbc slopes are consecutive integers, and each of these dbc slopes is quasi-alternating but not alternating. These slopes are presented in Tables~\ref{tab:symInfo1} and~\ref{tab:symInfo2}. 
\end{thm}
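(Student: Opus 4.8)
The proposal is to run Strategy~\ref{strat:dbc_slopes} for each of the nine knots of $\calA$, using Theorem~\ref{thm:exceptionalsym} to cut the problem down to a finite list of candidate slopes, and then to certify the resulting branch links with Strategy~\ref{strat:checkiflinkisQA}. The first point is that a non-trivial slope $r$ of a knot $K\in\calA$ can be a dbc slope only if it occurs in Table~\ref{tab:symmetric}: if $K(r)$ is the double branched cover of a link in $S^3$, then it carries an orientation-preserving involution with non-empty fixed set, and when $K(r)$ is hyperbolic the argument in the proof of Lemma~\ref{lem:dbc_lemma} (orbifold geometrization and Mostow rigidity) identifies this involution, up to isotopy, with an isometry, so the symmetry group of $K(r)$ is non-trivial; since $K$ is asymmetric, $r$ is then a symmetry-exceptional slope (cf.\ Corollary~\ref{cor:slope}), while if $K(r)$ is non-hyperbolic then $r$ is exceptional. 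Either way $r$ lies in the finite list of Theorem~\ref{thm:exceptionalsym}, so only finitely many candidate slopes need to be examined per knot.

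Next I would decide, for each candidate, whether the filling is actually a double branched cover of a link in $S^3$. For the symmetry-exceptional candidates I would run the Branch Locus Search (Strategy~\ref{strat:branchlocussearch}): by Lemma~\ref{lem:dbc_lemma} a $\Z_2$-symmetric filling $K(r)$ has a unique involution up to isotopy, hence a unique quotient orbifold, while a $\Z_2^2$-symmetric $K(r)$ has exactly three; the slope $r$ is a dbc slope precisely when one of these quotient orbifolds has underlying manifold $S^3$, in which case the search also outputs an explicit diagram of the branch link. For the three exceptional candidates, each a $0$-surgery yielding a Seifert fibered space, I would argue directly from the Seifert invariants (via the Equivariant Torus Theorem~\cite{equivarianttorustheorem} and the correspondence between Seifert fibered spaces over $S^2$ and double branched covers of Montesinos links) to determine the dbc status; note in addition that $H_1(K(0);\Z)$ is infinite, so a $0$-surgery is never an L-space and in particular never a quasi-alternating slope. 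The expected outcome of this bookkeeping, to be tabulated in Tables~\ref{tab:symInfo1} and~\ref{tab:symInfo2}, is that for each of the nine knots exactly two candidates survive as dbc slopes, that these two slopes are consecutive integers, and that they lie among the symmetry-exceptional (hence hyperbolic-filling) slopes.

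Finally I would certify the branch links: for each of the two dbc slopes of each knot I would feed the branch link into Strategy~\ref{strat:checkiflinkisQA}, computing Khovanov homology to confirm thinness and then displaying a sequence of determinant-additive smoothings terminating at unknots, which witnesses quasi-alternatingness. That none of these slopes is alternating follows from~\cite{BKM_alt}, where the asymmetric census L-space knots are shown to admit no alternating slopes (equivalently, the branch links found here are non-alternating links). Together with the two preceding steps this yields the claimed classification, and in particular answers Question~12 of~\cite{ABG+19}.

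The heart of the argument, and its main obstacle, is the Branch Locus Search. It must not only locate the branch links of the two genuine dbc slopes but also---this is the subtler point---recognize the quotient orbifolds of the remaining symmetry-exceptional slopes well enough to confirm that their underlying $3$-manifolds are \emph{not} $S^3$; this is a $3$-manifold recognition problem with no a priori termination guarantee, since it depends on verified symmetry-group computations and on actually finding the branch loci. In practice these branch loci turn out to have small crossing number, so the search completes quickly, and for the same reason Strategy~\ref{strat:checkiflinkisQA} succeeds on each of them, even though it too has no general termination guarantee.
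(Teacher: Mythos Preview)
Your overall strategy matches the paper's: restrict to the finite list of exceptional and symmetry-exceptional slopes supplied by Theorem~\ref{thm:exceptionalsym}, run the Branch Locus Search (Strategy~\ref{strat:branchlocussearch}) to decide for each candidate whether some quotient orbifold has underlying manifold $S^3$, and then certify the resulting branch links with Strategy~\ref{strat:checkiflinkisQA}. Citing \cite{BKM_alt} for the non-alternating conclusion is also what the paper does, though the paper supplements this with a direct argument using the uniqueness of the branching set (Lemma~\ref{lem:dbc_lemma}) and, for the Seifert fibered fillings, Motegi's uniqueness theorem.

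There is, however, a genuine error in your handling of the three Seifert fibered exceptional slopes. You read the label $(0,1)$ in Table~\ref{tab:symmetric} as the $0$-surgery slope in the $S^3$-basis and conclude that $H_1(K(0);\Z)$ is infinite, hence these fillings are not L-spaces and in particular not quasi-alternating; you then expect both dbc slopes to lie among the symmetry-exceptional (hyperbolic) fillings. But the slopes in Table~\ref{tab:symmetric} are recorded in SnapPy's geometric basis for the census manifolds, not in the $S^3$-basis, so $(0,1)$ is \emph{not} the homological longitude. In fact each of the Seifert fibered $(0,1)$-fillings on $t12681$, $o9\_39162$, $o9\_40363$ is a rational homology sphere and is the double branched cover of a Montesinos knot in $S^3$ (namely $K11n89$, $K12n278$, $K12n479$ in Tables~\ref{tab:symInfo1}--\ref{tab:symInfo2}); for these three knots the Seifert fibered slope is precisely one of the two quasi-alternating dbc slopes. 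Your expected bookkeeping---that the two dbc slopes always come from hyperbolic fillings---would therefore fail for these three knots, and the Seifert fibered case must be treated as in the paper: recognize the filling as the double branched cover of a Montesinos link and then verify that link is quasi-alternating.
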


\begin{table}[htbp]
{\small
\caption{Information on the symmetric slopes of the asymmetric census L-space knots. A link marked with $(QA)$ is proven to be quasi-alternating. If a symmetry group is marked with $SFS$ the surgered manifold is proven to be a Seifert fibered space.}
\label{tab:symInfo1}
\begin{tabular}{@{}
>{\raggedright}p{0.1\linewidth}
>{\raggedright}p{0.15\linewidth}	
>{\raggedright}p{0.21\linewidth}
>{\raggedright\arraybackslash}p{0.31\linewidth}@{}}
\toprule
slope  & symmetry &  quotient manifold& branching set \\
\midrule
\multicolumn{4}{l}{\textbf{$t12533$}}\\
\midrule
$(0,1)$&$\Z_2$&$S^3$&$K12n407$ (QA)\\
$(1,1)$&$\Z_2$&$-L(2,1)$&$L13n7360(0,0)(2,1)$\\
$(-1,1)$&$\Z_2$&$S^3$&$L12n789$ (QA)\\
$(-2,1)$&$\Z_2$&$-L(3,1)$&$L12n722(3,1)(0,0)$\\
$(-1,2)$&$\Z_2$&$L(3,1)$&$L11n192(-3,1)(0,0)$\\
$(-3,1)$&$\Z_2$&$-L(4,1)$&$L11n350(0,0)(-4,1)(0,0)$\\
\midrule
\multicolumn{4}{l}{\textbf{$t12681$}}\\		
\midrule
 $(0,1)$& SFS &$S^3$&$K11n89$ (QA)\\
 \multirow{3}{*}{$(1,1)$}& 
\multirow{3}{*}{\begin{tabular}{@{}>{\raggedright}p{0.8\linewidth}>{\raggedleft}p{0.1\linewidth}}$\Z_2^2$ & {\Vast\{} \end{tabular}}  
&$-L(5,1)$ &$L11n419(0,0)(5,1)(0,0)$ \\ 
	&  &$-L(3,1)$&$L12n1907(0,0)(3,1)(0,0)$\\ 
	&  &$-L(2,1)$&$L13n7356(0,0)(0,0)(2,1)$\\
 $(-1,1)$&$\Z_2$&$S^3$&$L11n172$ (QA)\\
$(-1,2)$&$\Z_2$&$L(3,1)$&$L10a81(-3,1)(0,0)$\\
$(-1,3)$&$\Z_2$&$L(4,1)$&$L11a282(-4,1)(0,0)$\\
$(-2,3)$&$\Z_2$&$L(5,3)$&$L13n4363(-5,3)(0,0)$\\
\midrule
\multicolumn{4}{l}{\textbf{$o9\_38928$}}\\
\midrule
$(0,1)$&$\Z_2$&$S^3$&$L11n178$ (QA)\\
$(1,1)$&$\Z_2$&$S^3$&$K11n172$ (QA)\\
$(-1,1)$&$\Z_2$&$L(3,1)$&$L12n703(-3,1)(0,0)$\\
$(-2,1)$&$\Z_2$&$L(2,1)$&$L12n1949(0,0)(2,1)(0,0)$\\
\multirow{3}{*}{$(2,1)$}&
\multirow{3}{*}{\begin{tabular}{@{}>{\raggedright}p{0.8\linewidth}>{\raggedleft}p{0.1\linewidth}}$\Z_2^2$ & {\Vast\{} \end{tabular}}
&	$L(3,1)$&$L13n7433(0,0)(-3,1)(0,0)$  \\ 
&&	$L(2,1)$&$L10a105(0,0)(-2,1)$\\ 
&&	$L(8,1)$&$L10n54(-8,1)(0,0)$\\
\midrule
\multicolumn{4}{l}{\textbf{$o9\_39162$}}\\
\midrule
$(0,1)$& SFS &$S^3$&$K12n278$ (QA)\\
$(1,1)$&$\Z_2$&$S^3$&$L12n1050$ (QA)\\
$(-1,1)$&$\Z_2$&$L(3,1)$&$L13n9864(-3,1)(0,0)(0,0)$\\
$(2,1)$&$\Z_2$&$-L(7,1)$&$L10n44(7,1)(0,0)$\\
$(1,2)$&$\Z_2$&$-L(3,1)$&$L12n784(3,1)(0,0)$\\
$(1,4)$&$\Z_2$&$L(7,4)$&$L13n4343(-7,4)(0,0)$\\
\midrule
\multicolumn{4}{l}{\textbf{$o9\_40363$}}\\
\midrule
$(0,1)$&SFS &$S^3$&$K12n479$ (QA)\\
\multirow{3}{*}{$(1,1)$}&\multirow{3}{*}{\begin{tabular}{@{}>{\raggedright}p{0.8\linewidth}>{\raggedleft}p{0.1\linewidth}}$\Z_2^2$ & {\Vast\{} \end{tabular}}
&$-L(2,1)$& $L14n43377(0,0)(2,1)$ \\ 
&&$-L(3,1)$&$L13n9451(0,0)(3,1)(0,0)$\\ 
&&$-L(7,1)$&$L11n419(0,0)(7,1)(0,0)$\\
$(-1,1)$&$\Z_2$&$S^3$&$L12n785$ (QA)\\
$(-1,2)$&$\Z_2$&$L(3,1)$&$L11a225(-3,1)(0,0)$\\
$(-1,3)$&$\Z_2$&$-L(4,1)$&$L12a1861(0,0)(4,1)(0,0)$\\
$(-3,4)$&$\Z_2$&$L(7,4)$&$L13n4363(-7,4)(0,0)$\\

\bottomrule
\end{tabular}
}
\end{table}
			
\begin{table}[htbp]
{\small
\caption{Table~\ref{tab:symInfo1} continued.}
\label{tab:symInfo2}
\begin{tabular}{@{}
>{\raggedright}p{0.1\linewidth}
>{\raggedright}p{0.15\linewidth}	
>{\raggedright}p{0.21\linewidth}
>{\raggedright\arraybackslash}p{0.31\linewidth}@{}}
\toprule
slope  & symmetry &  quotient manifold& branching set \\
\midrule
\multicolumn{4}{l}{\textbf{$o9\_40487$}}\\
\midrule
$(0,1)$&$\Z_2$&$S^3$&$L11n152$ (QA)\\
$(1,1)$&$\Z_2$&$S^3$&$K11n147$ (QA)\\
$(-1,1)$&$\Z_2$&$L(3,1)$&$L12n722(-3,1)(0,0)$\\
$(1,2)$&$\Z_2$&$L(3,1)$&$L12n968(-3,1)(0,0)$\\
$(2,1)$&$\Z_2$ &$L(2,1)$&$L11n333(0,0)(2,1)(0,0)$\\
$(3,1)$&$\Z_2$&$-L(7,1)$&$L12n1314(7,1)(0,0)$\\
$(-2,1)$&$\Z_2$&$L(2,1)$&$L12n1041(-2,1)(0,0)$\\
\midrule
\multicolumn{4}{l}{\textbf{$o9\_40504$}}\\
\midrule
$(0,1)$&$\Z_2$&$S^3$&$L11n179$ (QA)\\
$(1,1)$&$\Z_2$&$S^3$&$K11n166$ (QA)\\
$(-1,1)$&$\Z_2$&$L(3,1)$&$L12n715(-3,1)(0,0)$\\
$(-2,1)$&$\Z_2$&$L(2,1)$&$L12n1037(-2,1)(0,0)$\\
\multirow{3}{*}{$(2,1)$}&
\multirow{3}{*}{\begin{tabular}{@{}>{\raggedright}p{0.8\linewidth}>{\raggedleft}p{0.1\linewidth}}		$\Z_2^2$ & {\Vast\{} \end{tabular}}
& $L(3,1)$& $L13n7421(0,0)(-3,1)(0,0)$  \\ 
& 	&$-L(5,1)$&$L13n8521(0,0)(5,1)(0,0)$\\ 
& 	&$L(2,1)$&$L10a146(0,0)(-2,1)(0,0)$\\
\midrule
\multicolumn{4}{l}{\textbf{$o9\_40582$}}\\
\midrule
$(0,1)$&$\Z_2$&$S^3$&$L13n4413$ (QA)\\
$(1,1)$&$\Z_2$&$S^3$&$K13n2958$ (QA)\\
$(-1,1)$&$\Z_2$&$L(5,3)$&$L10n44(-5,3)(0,0)$\\
$(1,2)$&$\Z_2$&$-L(3,1)$&$L12n996(3,1)(0,0)$\\
$(2,1)$&$\Z_2$&$-L(3,1)$&$L13n9833(3,1)(0,0)(0,0)$\\
$(4,1)$&$\Z_2$&$L(5,1)$&$L11n350(0,0)(-5,1)(0,0)$\\
\midrule	
\multicolumn{4}{l}{\textbf{$o9\_42675$}}\\
\midrule
$(0,1)$&$\Z_2$&$S^3$&$L12n702$ (QA)\\
$(1,1)$&$\Z_2$&$-L(5,3)$&$L14n24428(5,3)(0,0)$\\
$(-1,1)$&$\Z_2$&$S^3$&$K12n730$ (QA)\\
$(2,1)$&$\Z_2$&$L(2,1)$&$L13n7552(0,0)(-2,1)(0,0)$\\
\multirow{3}{*}{$(-2,1)$}&
\multirow{3}{*}{\begin{tabular}{@{}>{\raggedright}p{0.8\linewidth}>{\raggedleft}p{0.1\linewidth}}		$\Z_2^2$ & {\Vast\{} \end{tabular}}
& $L(2,1)$
&$L11n223(-2,1)(0,0)$ \\ 
& 	&$-L(6,1)$&$L10n24(0,0)(6,1)$\\ 
& &$-L(8,3)$&$L13n9832(0,0)(8,3)(0,0)$\\
\bottomrule
\end{tabular}
}
\end{table}
		
\begin{proof} 
Let $K$ be an asymmetric census knot from $\mathcal A$. We start by classifying the dbc slopes of $K$ using Strategy~\ref{strat:dbc_slopes}.  

From Theorem~\ref{thm:exceptionalsym} we see that we have three different cases to consider: $K(r)$ has symmetry group $\Z_2$, $K(r)$ has symmetry group $\Z_2^2$, or $K(r)$ is a small Seifert fibered space.
				
In the first two cases, we perform the branch set search (Strategy~\ref{strat:branchlocussearch}) to search for an explicit branching set if the symmetry group is $\Z_2$ and for three explicit branching sets if the symmetry group is $\Z_2^2$.  In Tables~\ref{tab:symInfo1} and~\ref{tab:symInfo2} we present the simplest surgery description of the branching set that we found via this search. The symmetry exceptional slope is a dbc slope if and only if the quotient manifold for one of these symmetries is $S^3$.
				
If $K(r)$ is a small Seifert fibered space, then $K(r)$ is the double branched cover of a corresponding Montesinos link in $S^3$. Thus the three Seifert fibered surgery slopes are dbc slopes. This finishes the classification of the dbc slopes of the knots in $\mathcal A$.
				
To classify the set of quasi-alternating surgeries of the knots in $\mathcal A$ we use Strategy~\ref{strat:checkiflinkisQA} to show that all dbc slopes are quasi-alternating slopes. 
				
Finally, we show that no knot in $\mathcal A$ admits an alternating surgery. This fact was established by other techniques in \cite{BKM_alt}. We can immediately deduce that none of the symmetry-exceptional slopes are alternating using the uniqueness of the branching sets given by Lemma~\ref{lem:dbc_lemma}. Since each of the links in Tables~\ref{tab:symInfo1} and~\ref{tab:symInfo2} are non-alternating as evident from the \textit{n} in their DT name, it only remains to argue that none of the three small Seifert fibered slopes is alternating. In general, small Seifert fibered spaces may arise as the double branched cover of several different links in $S^3$. However, Motegi~\cite[Theorem 3.1]{Mo17} applies to show that these specific Seifert fibered spaces arise only as the double branched covers of a unique link, which is the Montesinos link associated to the Seifert invariants of the manifold.
\end{proof}
					
\section{The symmetric census L-space knots}\label{sec:sym_census}
Having classified the quasi-alternating slopes of the asymmetric census knots in Theorem~\ref{thm:asym_QA},  we next ask which symmetric census L-space knots admit a quasi-alternating surgery. All these knots are strongly invertible and thus every slope is a dbc slope.

\subsection{The classification of the census L-space knots} \label{sec:classification}
Amongst the knot exteriors in the SnapPy census, there are exactly two knots where the L-space status is not decided~\cite{Du19}. These are the knots $o9\_30150$ and $o9\_31440$. 
Here we confirm that they are L-space knots by showing that they admit quasi-alternating surgeries.

\begin{thm}\label{thm:2uncertain}
The knots $o9\_30150$ and $o9\_31440$ both have a quasi-alternating surgery. Hence they are L-space knots. 
\end{thm}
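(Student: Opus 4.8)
The plan is to apply Strategy~\ref{strat:singleQA_slope} to each of the knots $o9\_30150$ and $o9\_31440$. Both are strongly invertible, as one checks from SnapPy's census data, so by the Montesinos trick every slope of either knot is a dbc slope; and neither knot is currently known to fail to be an L-space knot, so nothing in the preliminary checks of Strategy~\ref{strat:singleQA_slope} rules out the search.

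Write $K$ for one of the two knots. The first substantive step is to produce a tangle exterior $T_K$ of $K$ via the Tangle Exterior Search (Strategy~\ref{strat:tangle_ext}): run the Branch Locus Search (Strategy~\ref{strat:branchlocussearch}) to find, for some integer $n$, a triple of links $B_n, B_{n+1}, B_{n+1/2}$ in $S^3$ with $\dbc(B_n)=K(n)$, $\dbc(B_{n+1})=K(n+1)$, and $\dbc(B_{n+1/2})=K(n+1/2)$; then locate a crossing of a reduced diagram of $B_{n+1/2}$ whose two smoothings are $B_n$ and $B_{n+1}$, whose crossing change unknots $B_{n+1/2}$, and whose exterior is certified to be a tangle exterior of $K$ by the normal-surface splitting test. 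With $T_K$ fixed, the branching set of $K(r)$ is the rational tangle filling $T_K(r)$, and because the Branch Locus Search returns branch loci of low crossing number, $T_K(r)$ stays simple for small integral $r$.

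Next I would compute the Alexander polynomial of $K$ and use its span as a guide: since a quasi-alternating surgery is an L-space surgery, \cite{genus_L_spaces} forces any quasi-alternating slope $r$ to satisfy $|r|\geq 2g(K)-1$, so it suffices to march through the integral (and, if needed, half-integral) slopes in this range, form the branching set $L=T_K(r)$, and test $L$ with Strategy~\ref{strat:checkiflinkisQA}: verify Khovanov thinness, then recursively look for a crossing of a low-crossing diagram of $L$ at which the determinants of the two smoothings add to $\det(L)$ and both smoothings are already known to be quasi-alternating, seeding the recursion with Jablan's list of prime quasi-alternating knots up to twelve crossings~\cite{Ja14}. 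As soon as some slope $r$ yields a quasi-alternating $L$, the proof is complete: $K(r)=\dbc(L)$ is the double branched cover of a quasi-alternating link, hence an L-space~\cite{OS05,QA_thin}, so $r$ is a quasi-alternating surgery and $K$ is an L-space knot. The resulting slopes and branching sets would be recorded in Table~\ref{tab:QATable}.

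The hard part is that quasi-alternating-ness is not known to be algorithmically decidable: the recursion in Strategy~\ref{strat:checkiflinkisQA} can stall on a branching set that is genuinely thin but admits no apparent reducing crossing, and the Branch Locus Search could in principle hand back an unwieldy link. As everywhere else in the paper, the resolution is simply that for these two particular knots the search does terminate --- a single well-chosen small integral slope produces a non-alternating (hence the ``n'' in its DT name) but quasi-alternating branching set --- and pinning down that slope is the real content of the argument.
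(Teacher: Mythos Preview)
Your outline follows a valid strategy, but it diverges from the paper's actual proof in two ways worth noting.

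First, the paper does not go through the tangle exterior search at all. It applies the Branch Locus Search (Strategy~\ref{strat:branchlocussearch}) directly to the single slope $(-1,1)$ on each knot and finds explicit branching sets in $S^3$: $o9\_30150(-1,1)=\dbc(K13n3009)$ and $o9\_31440(-1,1)=\dbc(K13n2028)$. Your detour through Strategy~\ref{strat:tangle_ext} would also succeed, but it is more machinery than needed here---one does not need a tangle exterior parametrizing \emph{all} fillings when a single well-chosen slope already yields a tractable branching set.

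Second, and more importantly, the paper's proof is an explicit certificate rather than a description of a search. It exhibits, for each 13-crossing branching set, a specific crossing at which the quasi-alternating condition is verified by hand: $K13n3009$ has determinant $87$ and smooths at the indicated crossing to $K12n666$ (determinant $81$, quasi-alternating by Jablan's table) and to a trefoil connect-sum Hopf link (determinant $6$, alternating); likewise $K13n2028$ has determinant $105$ and smooths to $K12n598$ (determinant $99$) and the same alternating link. You correctly identify that termination of the recursive QA check is the crux, and you even say that ``pinning down that slope is the real content of the argument''---but you then stop short of pinning it down. For a computer-assisted statement of this kind, the explicit data \emph{is} the proof; without it, what you have written is a plan rather than an argument.
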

 
\begin{proof}
We identify a quasi-alternating slope using Strategy~\ref{strat:singleQA_slope}. For that, we first use Strategy~\ref{strat:branchlocussearch} to demonstrate that the $(-1,1)$-fillings on both of these manifolds are double branched coverings of small crossing knots:  
\[o9\_30150(-1,1) = \operatorname{DBC}(K13n3009) \,\,\, \mbox{and} \,\,\, o9\_31440(-1,1)= \operatorname{DBC}(K13n2028).\]  
To complete the proof, we use Strategy~\ref{strat:checkiflinkisQA} to show that both knots $K13n3009$ and $K13n2028$ are quasi-alternating.
 
Figure~\ref{fig:K13n3009-K13n2028}(Left) shows a diagram of the knot $K13n3009$ with a crossing circled. This knot has determinant $87$. One smoothing of the knot gives the knot $K12n666$ which is quasi-alternating by Jablan's table~\cite{Ja14} and has determinant $81$. The other smoothing gives the connected sum of a trefoil and a Hopf link which is alternating and has determinant $6$. Since $87=81+6$ and both smoothings are quasi-alternating, the knot $K13n3009$ is quasi-alternating.
 
Figure~\ref{fig:K13n3009-K13n2028}(Right) shows a diagram of the knot $K13n2028$ with a crossing circled. This knot has determinant $105$. One smoothing of the knot gives the knot $K12n598$ which is quasi-alternating by Jablan's table~\cite{Ja14} and has determinant $99$. The other smoothing gives the connected sum of a trefoil and a Hopf link which is alternating and has determinant $6$.  Since $105=99+6$ and both smoothings are quasi-alternating, the knot $K13n2028$ is quasi-alternating.
 \end{proof}

 \begin{figure}
     \centering
     \includegraphics[width=.46\textwidth]{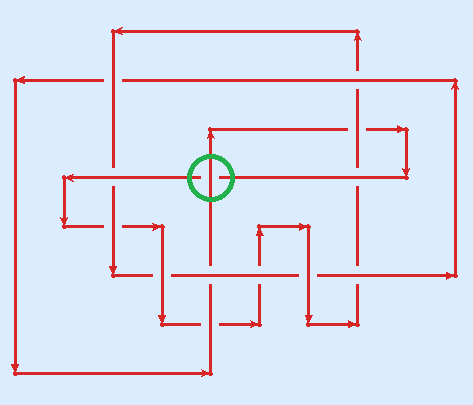} \qquad \includegraphics[width=.4\textwidth]{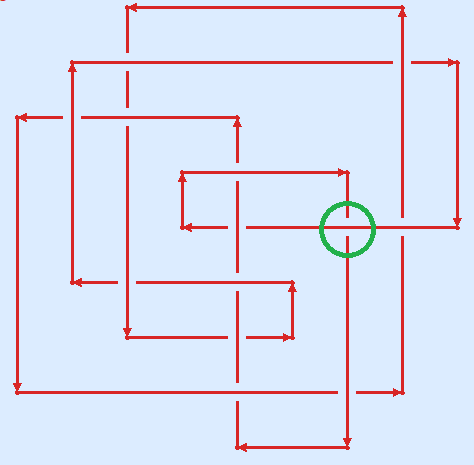} 
     \caption{(Left) A diagram of the knot $K13n3009$ with a quasi-alternating crossing circled. (Right) A diagram of the knot $K13n2028$ with a quasi-alternating crossing circled. Both diagrams were created using SnapPy~\cite{CDGW}.}
     \label{fig:K13n3009-K13n2028}
 \end{figure}

\subsection{Existence of quasi-alternating surgeries}\label{sec:QA_existence}
Here our main results are summarized in the following theorem. 
			
\begin{thm} \label{thm:QAsurgeries}
Every census L-space knot admits a quasi-alternating surgery, except for $t09847$ and $o9\_30634$. 
				
More precisely:
\begin{itemize}
	\item $431$ of the $632$ SnapPy census L-space knots admit a Seifert fibered surgery.  Some admit multiple Seifert fibered surgeries giving a total of $665$ Seifert fibered surgeries.
	\item These Seifert fibered surgeries are all quasi-alternating surgeries, except for the $8$ surgeries displayed in Table~\ref{tab:nonQA}.  
    \item Every census knot except for $t09847$ and $o9\_30634$ admits a quasi-alternating surgery. A quasi-alternating surgery for each census L-space knot without an alternating surgery or a quasi-alternating Seifert fibered space surgery is displayed in Table~\ref{tab:QATable}. (The alternating surgeries can be found in~\cite{BKM_alt}.)
\end{itemize}
\end{thm}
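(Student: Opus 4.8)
\textbf{Proof proposal for Theorem~\ref{thm:QAsurgeries}.}
The plan is to carry out the proof as a large finite computation, organized around the strategies of \S\ref{sec:strategies}, and split according to the symmetry type of the knot. For the nine asymmetric census L-space knots $\calA$, the statement follows immediately from Theorem~\ref{thm:asym_QA}, which already exhibits two quasi-alternating slopes for each. So the bulk of the work concerns the $623$ strongly invertible census L-space knots together with the two knots $o9\_30150$ and $o9\_31440$ from Theorem~\ref{thm:2uncertain}, whose L-space status is settled there by producing an explicit quasi-alternating surgery; after incorporating these, one has a definite list of $632$ strongly invertible L-space knots to treat. The first step is to separate out the knots admitting an alternating surgery, handled in~\cite{BKM_alt}; for these, an alternating slope is in particular quasi-alternating and nothing further is needed.

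Next I would treat the Seifert fibered surgeries. Using Dunfield's census data~\cite{Du18} (or the accompanying data at~\cite{BKM}) one lists all Seifert fibered fillings of census L-space knots; there are $665$ such surgeries on $431$ of the knots. Each small Seifert fibered L-space is the double branched cover of a Montesinos link, so every such slope is a dbc slope, and one applies Strategy~\ref{strat:checkiflinkisQA} to the resulting Montesinos link: compute Khovanov homology, check thinness, and then recursively verify the determinant-additivity condition against the growing lists $\mathcal{QA}$ and $\mathcal{NQA}$, seeded by Jablan's classification~\cite{Ja14}. This identifies all but $8$ of the Seifert fibered surgeries as quasi-alternating; the $8$ exceptions are recorded in Table~\ref{tab:nonQA}.

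The remaining step is to handle the census L-space knots that have neither an alternating surgery nor a quasi-alternating Seifert fibered surgery. For each such knot $K$, run the Tangle Exterior Search (Strategy~\ref{strat:tangle_ext}): using the Branch Locus Search (Strategy~\ref{strat:branchlocussearch}) identify branch loci $B_n, B_{n+1}, B_{n+1/2}$ realizing $K(n), K(n+1), K(n+1/2)$ as double branched covers, find a crossing of a small diagram of $B_{n+1/2}$ whose two smoothings are $B_n$ and $B_{n+1}$ and whose crossing change unknots $B_{n+1/2}$, and verify via the normal-surface/splitting-torus check that the crossing exterior is genuinely a tangle exterior $T_K$ for $K$. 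Then for each integral slope $r$ with $|r|\geq 2g(K)-1$ (equivalently, with $|r|$ at least the span of the Alexander polynomial, since $K$ is an L-space knot) the rational tangle filling $T_K(r)$ is a branch locus for $K(r)$, and one applies Strategy~\ref{strat:checkiflinkisQA} to each such $T_K(r)$ until a quasi-alternating one is found; that slope is recorded in Table~\ref{tab:QATable}. Finally, for the two knots $t09847$ and $o9\_30634$ one must instead \emph{rule out} all quasi-alternating surgeries: here one shows no slope yields a Khovanov thin double branched cover — since a quasi-alternating slope is in particular a thin slope — so by the implication chain in the introduction neither knot has a quasi-alternating surgery.

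The main obstacle, as flagged in the remarks following Strategies~\ref{strat:branchlocussearch}--\ref{strat:checkiflinkisQA}, is that none of these steps is guaranteed to terminate in principle: the Branch Locus Search rests on symmetry-group computations of closed hyperbolic $3$-manifolds (no algorithm is known, though SnapPy's verified computations succeed in practice), and recognizing a link as quasi-alternating is not known to be decidable (there exist thin links that are not quasi-alternating~\cite{Greene_thin_QA}). So the theorem is not a corollary of a uniformly terminating algorithm; rather, the content is that for each of the finitely many census L-space knots the searches \emph{do} in fact terminate with a definitive answer, the branching sets that arise have small enough crossing number to be recognized, and the thin-but-not-quasi-alternating obstruction genuinely occurs only in the eight tabulated Seifert fibered cases and, at the level of whole knots, only for $t09847$ and $o9\_30634$. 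Verifying the two negative cases — that \emph{every} slope on $t09847$ and $o9\_30634$ fails to be thin — is the most delicate part, since it is an assertion about infinitely many slopes and must be reduced to a finite check using the structure of thin/L-space slopes for L-space knots together with the Khovanov-homology computations of the relevant branch loci.
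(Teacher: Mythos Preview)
Your proposal is broadly correct and follows the same computational philosophy as the paper, but it diverges from the paper's actual proof in two places worth noting.

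First, for the Seifert fibered surgeries you propose applying Strategy~\ref{strat:checkiflinkisQA} (the recursive quasi-alternating check) to each Montesinos branch locus. The paper instead invokes Issa's classification of quasi-alternating Montesinos links~\cite{Is18} directly. This is not merely a matter of efficiency: Strategy~\ref{strat:checkiflinkisQA} can only \emph{certify} that a link is quasi-alternating (or rule it out if it happens to be thick), but cannot in general certify that a thin link is \emph{not} quasi-alternating. To establish the second bullet --- that the $8$ surgeries in Table~\ref{tab:nonQA} genuinely fail to be quasi-alternating --- you need a negative criterion, and Issa's theorem (together with his Corollary~1, which says a Seifert fibered space over $S^2$ is the branched double cover of a quasi-alternating link if and only if it is the branched double cover of a quasi-alternating Montesinos link) supplies exactly that.

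Second, for the remaining knots you go straight to the Tangle Exterior Search (Strategy~\ref{strat:tangle_ext}) uniformly. The paper is more economical: it first runs Strategy~\ref{strat:singleQA_slope}, which relies only on the Branch Locus Search (Strategy~\ref{strat:branchlocussearch}) to find a small-crossing branching set for some slope and then checks that link for quasi-alternating status. This already succeeds for all but $6$ knots; only for those $6$ does the paper fall back to the tangle exterior construction, succeeding for $4$ of them. The remaining two are $t09847$ and $o9\_30634$, and the paper does not prove their non-existence of thin slopes here --- that is delegated entirely to~\cite{BKM_thin}. Your sketch of how to handle these two is in the right spirit, but the actual argument lives in that companion paper.
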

			
\begin{table}[htbp]
{\tiny
\caption{We display a quasi-alternating surgery for every census L-space knot that does not have an alternating surgery or a quasi-alternating Seifert fibered space surgery. The $17$ crossing knot $17nh\_0008151$ is in Burton's notation~\cite{Burton}.}
\label{tab:QATable}
\begin{tabular}
{@{}lll@{}c@{}lll@{}c@{}lll@{}}
\addlinespace[-\aboverulesep] 
\cmidrule[\heavyrulewidth]{1-3}  \cmidrule[\heavyrulewidth]{5-7}  \cmidrule[\heavyrulewidth]{9-11} 
knot  & slope  &  QA br set& \phantom{xxx} &knot  & slope &  QA br set& \phantom{xxx} &knot  & slope &  QA br set \\
\cmidrule{1-3}  \cmidrule{5-7}  \cmidrule{9-11}
	$s652$ & $(0, 1)$ & $L10n22$ &&
	$v1915$ & $(0, 1)$ & $K11n26$ &&
	$v2384$ & $(0, 1)$ & $L11n90$ \\
	$v2871$ & $(2, 1)$ & $K14n8212$ &&
	$v3105$ & $(0, 1)$ & $K10n8$ &&
	$v3335$ & $(0, 1)$ & $L10n52$ \\
	$v3482$ & $(0, 1)$ & $L11n37$ &&
	$t04557$ & $(0, 1)$ & $L12n761$ &&
	$t06573$ & $(0, 1)$ & $L12n373$ \\
	$t06715$ & $(0, 1)$ & $K11n25$ &&
	$t07104$ & $(1, 1)$ & $L12n1043$ &&
	$t08114$ & $(-1, 1)$ & $K11n142$ \\
	$t08532$ & $(0, 1)$ & $L12n674$ &&
	$t08936$ & $(0, 1)$ & $L11n157$ &&
	$t09126$ & $(0, 1)$ & $L12n737$ \\
	$t09284$ & $(0, 1)$ & $L12n8$ &&
	$t09882$ & $(-2, 1)$ & $K14n11995$ &&
	$t10215$ & $(0, 1)$ & $L11n73$ \\
	$t10292$ & $(0, 1)$ & $K11n68$ &&
	$t10496$ & $(0, 1)$ & $L12n83$ &&
	$t10832$ & $(0, 1)$ & $L11n231$ \\
	$t11198$ & $(0, 1)$ & $K11n5$ &&
	$t11376$ & $(1, 1)$ & $L11n77$ &&
	$t11548$ & $(0, 1)$ & $L12n82$ \\
	$t11887$ & $(0, 1)$ & $L12n923$ &&
	$t12288$ & $(0, 1)$ & $L12n338$ &&
	$t12533$ & $(0, 1)$ & $K12n407$ \\
	$o9\_10020$ & $(1, 1)$ & $L13n5889$ &&
	$o9\_16685$ & $(0, 1)$ & $L13n2087$ &&
	$o9\_17646$ & $(0, 1)$ & $K12n76$ \\
	$o9\_18341$ & $(-1, 1)$ & $K13n2479$ &&
	$o9\_18646$ & $(0, 1)$ & $L12n1066$ &&
	$o9\_19247$ & $(0, 1)$ & $L13n746$ \\
	$o9\_19364$ & $(-1, 1)$ & $L13n4475$ &&
	$o9\_24290$ & $(0, 1)$ & $L12n367$ &&
	$o9\_24401$ & $(0, 1)$ & $L11n130$ \\
	$o9\_24779$ & $(0, 1)$ & $K13n677$ &&
	$o9\_24946$ & $(-1, 1)$ & $L12n692$ &&
	$o9\_25110$ & $(0, 1)$ & $K12n81$ \\
	$o9\_25199$ & $(0, 1)$ & $K12n108$ &&
	$o9\_25444$ & $(0, 1)$ & $L13n2742$ &&
	$o9\_26471$ & $(-1, 1)$ & $L13n3693$ \\
	$o9\_26570$ & $(0, 1)$ & $L12n407$ &&
	$o9\_28284$ & $(0, 1)$ & $L12n753$ &&
	$o9\_29048$ & $(-1, 1)$ & $K12n311$ \\
	$o9\_29551$ & $(1, 1)$ & $K13n3186$ &&
	$o9\_29648$ & $(0, 1)$ & $K12n183$ &&
	$o9\_29751$ & $(0, 1)$ & $K12n290$ \\
	$o9\_29766$ & $(-1, 1)$ & $K14n9043$ &&
	$o9\_30142$ & $(-1, 1)$ & $K13n668$ &&
	$o9\_30150$ & $(0, 1)$ & $L12n784$ \\
	$o9\_31267$ & $(0, 1)$ & $K12n14$ &&
	$o9\_31321$ & $(0, 1)$ & $K12n327$ &&
	$o9\_31440$ & $(0, 1)$ & $L12n782$ \\
	$o9\_31481$ & $(0, 1)$ & $K12n70$ &&
	$o9\_32044$ & $(-3, 1)$ & $K15n42102$ &&
	$o9\_32065$ & $(0, 1)$ & $L12n975$ \\
	$o9\_32150$ & $(-1, 1)$ & $K13n2177$ &&
	$o9\_32314$ & $(0, 1)$ & $L12n340$ &&
	$o9\_32471$ & $(0, 1)$ & $K12n112$ \\
	$o9\_32964$ & $(-4, 1)$ & $17nh\_0008151$ &&
	$o9\_33189$ & $(1, 1)$ & $L12n321$ &&
	$o9\_33284$ & $(0, 1)$ & $K13n668$ \\
	$o9\_33380$ & $(0, 1)$ & $K13n587$ &&
	$o9\_33430$ & $(0, 1)$ & $K13n3880$ &&
	$o9\_33486$ & $(0, 1)$ & $L13n406$ \\
	$o9\_33801$ & $(0, 1)$ & $L12n686$ &&
	$o9\_33944$ & $(0, 1)$ & $L12n353$ &&
	$o9\_33959$ & $(0, 1)$ & $L12n874$ \\
	$o9\_34000$ & $(0, 1)$ & $L12n403$ &&
	$o9\_34409$ & $(0, 1)$ & $K13n602$ &&
	$o9\_34689$ & $(-1, 1)$ & $L12n1086$ \\
	$o9\_35666$ & $(0, 1)$ & $K12n17$ &&
	$o9\_35720$ & $(0, 1)$ & $K12n395$ &&
	$o9\_35928$ & $(0, 1)$ & $K12n85$ \\
	$o9\_36114$ & $(0, 1)$ & $K12n197$ &&
	$o9\_36250$ & $(-1, 1)$ & $K12n401$ &&
	$o9\_36380$ & $(0, 1)$ & $L13n1195$ \\
	$o9\_36544$ & $(0, 1)$ & $L11n177$ &&
	$o9\_36809$ & $(0, 1)$ & $K11n123$ &&
	$o9\_36958$ & $(0, 1)$ & $K12n73$ \\
	$o9\_37050$ & $(0, 1)$ & $L13n1166$ &&
	$o9\_37291$ & $(0, 1)$ & $L12n975$ &&
	$o9\_37482$ & $(0, 1)$ & $L12n855$ \\
	$o9\_37551$ & $(0, 1)$ & $K12n280$ &&
	$o9\_37751$ & $(0, 1)$ & $L12n754$ &&
	$o9\_37851$ & $(0, 1)$ & $L12n823$ \\
	$o9\_38287$ & $(0, 1)$ & $L12n801$ &&
	$o9\_38679$ & $(0, 1)$ & $L13n2013$ &&
	$o9\_38811$ & $(0, 1)$ & $K11n154$ \\
	$o9\_38928$ & $(0, 1)$ & $L11n178$ &&
	$o9\_38989$ & $(0, 1)$ & $L12n391$ &&
	$o9\_39521$ & $(0, 1)$ & $L13n1206$ \\
	$o9\_39606$ & $(-1, 1)$ & $K12n139$ &&
	$o9\_39859$ & $(1, 1)$ & $L12n923$ &&
	$o9\_39879$ & $(-1, 1)$ & $K12n593$ \\
	$o9\_39981$ & $(0, 1)$ & $L12n362$ &&
	$o9\_40026$ & $(0, 1)$ & $L13n3152$ &&
	$o9\_40075$ & $(0, 1)$ & $K12n26$ \\
	$o9\_40487$ & $(0, 1)$ & $L11n152$ &&
	$o9\_40504$ & $(0, 1)$ & $L11n179$ &&
	$o9\_40582$ & $(0, 1)$ & $L13n4413$ \\
	$o9\_42224$ & $(0, 1)$ & $K12n2$ &&
	$o9\_42493$ & $(0, 1)$ & $K12n536$ &&
	$o9\_42675$ & $(0, 1)$ & $L12n702$ \\
	$o9\_42961$ & $(0, 1)$ & $L13n4875$ &&
	$o9\_43750$ & $(0, 1)$ & $K13n4022$ &&
	$o9\_43857$ & $(0, 1)$ & $L13n3470$ \\
	\cmidrule{1-3}  \cmidrule{5-7}  \cmidrule{9-11}
	\end{tabular}
}
\end{table}
			
\begin{proof}
	For the first two bullets we load from Dunfield's list of exceptional surgeries~\cite{Du18} all Seifert fibered space surgeries and use Issa's classification of quasi-alternating Montesinos links~\cite{Is18} to verify the statements. 
				
	For the last bullet, we use Strategy~\ref{strat:singleQA_slope} to find an explicit quasi-alternating branching set. This identifies quasi-alternating slopes for all but $6$ knots.	
				
    For the remaining $6$ knots, we used Strategy~\ref{strat:tangle_ext} to create the tangle exteriors. Then we can fill the tangle exteriors to create explicit branching sets of surgeries and check with Strategy~\ref{strat:checkiflinkisQA} if this branching set is quasi-alternating. This identifies quasi-alternating branching sets for $4$ more knots. 
	
    The remaining two knots are $t09847$ and $o9\_30634$ for which this search did not find a quasi-alternating branching set. In fact, these two knots admit no Khovanov thin slope and hence no quasi-alternating slope as shown in~\cite{BKM_thin}.
\end{proof}
			
\begin{table}[htbp] 
\caption{The $8$ SFS surgeries along census L-space knots where the corresponding Montesinos links are not quasi-alternating. Here we are using the Regina notation for Seifert fibered spaces~\cite{BBP+}. Refer to Section~\ref{sec:QA_torus} for more details.}
\label{tab:nonQA}
\begin{tabular}{cccc}
	\toprule
	Knot	& slope & SFS & Montesinos link\\
	\midrule
	$v2871$ & $(0, 1)$ & $SFS [S2: (2,1) (5,2) (7,-4)]$& $K11n20$ \\
	$v2871$&$(-1, 1)$&$SFS [S2: (2,1) (5,2) (8,-5)$&$L11n127$\\
	$t09847$&$(0, 1)$&$SFS [S2: (2,1) (7,2) (8,-5)]$&$L12n612$\\
	$t09882$&$(0, 1)$&$SFS [S2: (3,1) (5,3) (7,-5)]$&$K12n322$\\
	$o9\_30634$&$(0, 1)$&$SFS [S2: (2,1) (8,3) (9,-7)]$&$L13n2575$\\
	$o9\_32044$&$(0, 1)$&$SFS [S2: (2,1) (5,2) (12,-7)]$&$L12n799$\\
	$o9\_32964$&$(0, 1)$&$SFS [S2: (2,1) (7,3) (11,-7)]$&$K13n2557$\\
	$o9\_32964$&$(-1, 1)$&$SFS [S2: (2,1) (8,3) (9,-5)]$&$L13n2596$\\
	\bottomrule
\end{tabular}
\end{table}
			
\section{The Baker--Luecke asymmetric L-space knots}\label{sec:Baker_Luecke}

In~\cite{BL17} an infinite family of interesting hyperbolic asymmetric L-space knots was constructed and in~\cite{BL17,BKM_alt} their alternating slopes were studied. 
The subfamily of knots $K_{(m,b1,a1,a2,a3)}$ defined in~\cite[\S 11.4]{BL17} for non-negative integers $a_3, a_2, a_1, m, b_1$ gives a small collection of examples. In~\cite{BKM_alt} we have shown that the $14$ simplest of these knots all have exactly two alternating slopes, and these pairs of slopes are consecutive integers. Here we show that $6$ of these have no further dbc slopes and give evidence that the same holds true for the other $8$ knots. 

\begin{thm} \label{thm:BLknots}\hfill
\begin{enumerate}
    \item $K_{(m,b1,a1,a2,a3)}$ is an asymmetric hyperbolic L-space knot for the $14$ values of $(m,b1,a1,a2,a3)$ shown in Table~\ref{tab:BL_Knots_alt}. 
    \item All their symmetry-exceptional slopes are presented in Table~\ref{tab:BL_Knots_alt}. 
    \item For $6$ of the above $14$ knots there are exactly two dbc surgeries. Both dbc surgeries are consecutive integers and alternating. These surgeries together with their branching sets are displayed in Table~\ref{tab:BL_Knots_alt}. 
    \item For the remaining $8$ of the above $14$ knots there are still two dbc surgeries. Both these dbc surgeries are consecutive integers and alternating. These surgeries together with their branching sets are displayed in Table~\ref{tab:BL_Knots_alt}. Moreover, for each of these $8$ knots there are at most two more slopes that might be dbc slopes. These further possible dbc slopes are present in Table~\ref{tab:BL_Knots_alt}. All other slopes are proven to not be dbc slopes. 
\end{enumerate}
\end{thm}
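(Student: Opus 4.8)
The plan is to follow Strategy~\ref{strat:dbc_slopes} essentially verbatim, exploiting that each knot $K_{(m,b1,a1,a2,a3)}$ in question is asymmetric: by Corollary~\ref{cor:slope}, hence Corollary~\ref{cor:computableexceptionalsymmetryslopes}, every dbc slope of an asymmetric knot is exceptional or symmetry-exceptional, and these form a computable finite list. Part~(1) is inherited from \cite{BL17} and \cite{BKM_alt}: the family is built in \cite{BL17} precisely to produce asymmetric hyperbolic L-space knots, and the $14$ tuples $(m,b1,a1,a2,a3)$ recorded in Table~\ref{tab:BL_Knots_alt} were already singled out in \cite{BKM_alt}; one re-verifies with SnapPy that each complement is hyperbolic with trivial symmetry group, and recalls the L-space property from \cite{BL17}. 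Along the way one reads off the genus $g(K_{(m,b1,a1,a2,a3)})$ from the Alexander polynomial, since the L-space slopes fill $[2g-1,\infty)$ and this confines where a dbc slope that is also an L-space slope can sit.

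For part~(2) I would run Strategy~\ref{strat:symmetryexceptional_slopes} knot by knot: confirm the symmetry group is trivial; compute the systole and cusp shape and form the Futer--Purcell--Schleimer constant $C(M)$ of Theorem~\ref{thm:FPS}; enumerate the finitely many slopes $r$ with normalized length $L(r)<C(M)$; peel off the genuinely exceptional slopes with Strategy~\ref{strat:exceptional_slopes} (Regina together with Dunfield's recognizer), which here yield only small Seifert fibered or otherwise recognizable fillings; and for each remaining short slope ask SnapPy for a verified hyperbolic structure and a verified symmetry-group computation, flagging those slopes where the filled manifold acquires a $\Z_2$ or $\Z_2^2$ symmetry. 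The flagged slopes, together with the exceptional ones, are exactly the symmetry-exceptional and exceptional slopes tabulated in Table~\ref{tab:BL_Knots_alt}; every slope not on this list is automatically \emph{not} a dbc slope.

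Parts~(3) and~(4) then reduce to deciding, for each exceptional or symmetry-exceptional slope $r$, whether $K_{(m,b1,a1,a2,a3)}(r)$ is a double branched cover of a link in $S^3$. For a small Seifert fibered filling one argues as in the proof of Theorem~\ref{thm:asym_QA}, using Motegi~\cite{Mo17} to see the Montesinos branch presentation is unique and checking whether its base is $S^3$ (or invoking the Equivariant Torus Theorem for an ad hoc analysis). For a hyperbolic filling with symmetry group $\Z_2$ or $\Z_2^2$ one runs the Branch Locus Search, Strategy~\ref{strat:branchlocussearch}, which by Lemma~\ref{lem:dbc_lemma} must return the unique branch pair $(M,B)$ (respectively the three branch pairs) realizing $K_{(m,b1,a1,a2,a3)}(r)$, and $r$ is a dbc slope precisely when one of these quotients $M$ is $S^3$. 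From \cite{BKM_alt} each of the $14$ knots already has two consecutive integer alternating slopes, so at least those two are dbc, with branch loci recorded in Table~\ref{tab:BL_Knots_alt}. For the $6$ ``complete'' knots the Branch Locus Search terminates on every short slope and shows no other quotient is $S^3$, giving exactly two dbc slopes; for the remaining $8$ knots the search still shows the quotient is not $S^3$ for all but at most two short slopes, which therefore are the only undetermined candidates, and all other slopes are proven not to be dbc. Assembling these determinations fills in Table~\ref{tab:BL_Knots_alt} and proves the theorem.

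The main obstacle is exactly the source of the residual ambiguity in part~(4): both Strategy~\ref{strat:symmetryexceptional_slopes} and Strategy~\ref{strat:branchlocussearch} rest on symmetry-group computations of closed hyperbolic $3$-manifolds, for which no terminating algorithm is known (Remark~\ref{rem:sym}), and even once the involutions are pinned down, a branch locus of a symmetry-exceptional filling may be too complicated to locate within a feasible search radius. For the eight exceptional knots this is precisely what happens at the two leftover short slopes, so the strongest honest statement is that they \emph{might} be dbc slopes; deciding them would require either a verified symmetry computation beyond current SnapPy capabilities or a dedicated topological argument for those specific fillings.
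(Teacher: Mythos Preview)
Your proposal is correct and follows essentially the same approach as the paper: verify asymmetry and hyperbolicity in SnapPy, cite \cite{BL17} for the L-space property, run Strategy~\ref{strat:symmetryexceptional_slopes} to enumerate the finitely many exceptional and symmetry-exceptional slopes, and then apply the Branch Locus Search (Strategy~\ref{strat:branchlocussearch}) to each, with the two alternating dbc slopes already supplied by \cite{BKM_alt}.

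Two minor points of divergence. First, the genus/Alexander polynomial remark and the Seifert fibered/Motegi discussion are superfluous here: Table~\ref{tab:BL_Knots_alt} shows that for these $14$ knots every short slope is in fact symmetry-exceptional with hyperbolic filling (symmetry $\Z_2$ or $\Z_2^2$), so no non-hyperbolic case actually arises. Second, your diagnosis of the residual ambiguity in part~(4) slightly misplaces the emphasis: in the paper the symmetry groups of all fillings \emph{were} successfully verified, and the obstruction is purely that the Branch Locus Search failed to locate a branching pair $(M,B)$ for the remaining slopes (the branch loci are presumably too complicated to appear in the search range), not that the symmetry computation failed. This does not affect the validity of your outline.
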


\begin{table}[htbp]
{\scriptsize
\caption{The first $14$ asymmetric L-space knots from~\cite{BL17}, their symmetry-exceptional slopes, and their branching sets and quotient manifolds (if known). We observe that every knot has exactly two dbc slopes that are both alternating and consecutive integers. The slopes that are marked with \textit{unknown} branching set are just conjectured (but not proven) to be not dbc slopes.} 
\label{tab:BL_Knots_alt}
\begin{tabular}{@{}
>{\raggedright}p{0.155\linewidth}
>{\raggedright}p{0.095\linewidth}
>{\raggedright}p{0.09\linewidth}	
>{\raggedright}p{0.13\linewidth}
>{\raggedright\arraybackslash}p{0.37\linewidth}@{}}
\toprule
$(m,b_1,a_1,a_2,a_3)$ & slope  & symmetry &  quotient& branching set \\
\midrule
$(1, 1, 1, 1, 0)$   &$(271,1)$ & $\Z_2$ & $S^3$& $K12a402$\\
 &$(272,1)$ & $\Z_2$   & $S^3$& $L12a955$\\
\multirow{3}{*}{}&\multirow{3}{*}{$(273, 1)$}&
\multirow{3}{*}{\begin{tabular}{@{}>{\raggedright}p{0.8\linewidth}>{\raggedleft}p{0.1\linewidth}}		$\Z_2^2$ & {\Vast\{} \end{tabular}}
&$-L(3,1)$& $L13n3686(3, 1)(0,0)$  \\ 
&& 	 &unknown&unknown\\ 
&& 	&unknown&unknown\\
&$(543, 2)$ & $\Z_2$ & $-L(3,1)$&$L13a2543(3, 1)(0,0)$\\
&$(2171, 8)$ & $\Z_2$ & unknown&unknown\\
\hline
$(1, 1, 0, 1, 1)$       &$(469, 1)$ & $\Z_2$& unknown & unknown\\
&$(470,1)$ & $\Z_2$ & $S^3$& $L13a1826$ \\
&$(471,1)$ & $\Z_2$   & $S^3$&$K13a4669$\\
&$(472, 1)$ & $\Z_2$ & $-L(4,1)$& $L14n55138(0,0)(4, 1)(0,0)$\\
&$(1411, 3)$ & $\Z_2$ & unknown& unknown\\
\hline                 
$(1, 1, 1, 2, 0)$       &$(415,1)$ & $\Z_2$ & $S^3$&$K13a1838$  \\
&$(416,1)$ & $\Z_2$   & $S^3$&$L13a3060$\\
&$(417, 1)$ & $\Z_2$ & $-L(3,1)$&$L14n16219(3, 1)(0,0)$ \\
&$(831, 2)$ & $\Z_2$ & $L(3,1)$& $L14a9663(-3, 1)(0,0)$\\
\hline                  
$(1, 1, 2, 1, 0)]$      &$(556,1)$ & $\Z_2$ & $S^3$& $L14a13430$ \\
&$(557,1)$ & $\Z_2$   & $S^3$& $K14a14150$\\
&$(558, 1)$ & $\Z_2$ & unknown& unknown\\
&$(6120, 11)$ & $\Z_2$ & unknown& unknown\\
 \hline                 
$(1, 2, 1, 1, 0)$       &$(554,1)$ & $\Z_2$ & $S^3$&  $L14a6302$\\
&$(555,1)$ & $\Z_2$   & $S^3$&$K14a12040$\\
  \hline                
$(2, 1, 1, 1, 0)$       &$(587,1)$ & $\Z_2$ & $S^3$& $K14a5753$  \\
&$(588,1)$ & $\Z_2$   & $S^3$&$L14a12460$\\
 &$(589, 1)$ & $\Z_2$ & unknown& unknown\\
&$(6460, 11)$ & $\Z_2$ & unknown& unknown\\
   \hline               
$(1, 1, 1, 1, 1)$       &$(1155,1)$ & $\Z_2$ & $S^3$& $K15a48589$ \\
 & $(1156,1)$ & $\Z_2 $ & $S^3$&$\text{15-crossing alternating link \cite{BL17}}$\\
&$(5778, 5)$ & $\Z_2$& unknown &unknown \\
   \hline               
$(1, 1, 0, 2, 1)$       &$(1008, 1)$ & $\Z_2$ & unknown&unknown \\
&$(1009,1)$ & $\Z_2$ & $S^3$& $K15a63354$ \\
& $(1010,1)$ & $\Z_2 $ & $S^3$& $\text{15-crossing alternating link \cite{BL17}}$\\
  \hline                
$(1, 2, 0, 1, 1)$ &$(965,1)$ & $\Z_2$ & $S^3$&  $K15a80635$\\
& $(966,1)$ & $\Z_2$ & $S^3$& $\text{15-crossing alternating link \cite{BL17}}$\\
\hline               
$(1, 1, 2, 2, 0)$ &$(845,1)$ & $\Z_2$ & $S^3$& $K15a71795$\\
&$(846,1)$ & $\Z_2$   & $S^3$&$\text{15-crossing alternating link \cite{BL17}}$\\
\hline             
$(1, 2, 1, 2, 0)$ &$(843,1)$ & $\Z_2$ & $S^3$&  $K15a27596$\\
 & $(844,1)$ & $\Z_2$  & $S^3$& $\text{15-crossing alternating link \cite{BL17}}$\\
\hline            
$(2, 1, 1, 2, 0)$ &$(911,1)$ & $\Z_2$ & $S^3$& $K15a50514$ \\
&$(912,1)$ & $\Z_2$   & $S^3$& $ \text{15-crossing alternating link \cite{BL17}}$ \\
\hline           
$(1, 1, 0, 1, 2)$ &$(1142,1)$ & $\Z_2 $& $S^3$& $\text{15-crossing alternating link \cite{BKM_alt}}$ \\
&$(1143,1)$ & $\Z_2$   & $S^3$& $K15a43818$\\
&$(1144, 1)$ & $\Z_2$ & unknown&unknown \\
\hline           
$(2, 1, 0, 1, 1)$ &$(1066,1)$ & $\Z_2 $& $S^3$& $\text{15-crossing alternating link \cite{BKM_alt}}$ \\
&$(1067,1)$ & $\Z_2$  & $S^3$ &$K15a84691$\\
&$(1068, 1)$ & $\Z_2$ & unknown& unknown\\     
\bottomrule
\end{tabular}
}
\end{table}

\begin{proof} As in~\cite{BKM_alt}, we load the surgery description of the $K_{(m,b1,a1,a2,a3)}$ given in~\cite{BL17} into SnapPy from which we can build the $14$ examples shown in Table~\ref{tab:BL_Knots_alt}. Then the verified functions in SnapPy tell us that these knots are hyperbolic with trivial symmetry group. That they are L-space knots was shown in~\cite{BL17} by demonstrating that they have alternating surgeries. 

Now let $K$ be one of the above $14$ knots. We then use Strategy~\ref{strat:symmetryexceptional_slopes} to compute the symmetry-exceptional slopes of $K$. In total, we have found $43$ slopes with symmetry groups $\Z_2$ and $1$ slope with symmetry group $\Z_2^2$.  These are displayed in Table~\ref{tab:BL_Knots_alt}.
Every dbc slope is contained in this list. 

Then we use the branch set search (Strategy~\ref{strat:branchlocussearch}) to look for the branching sets of the symmetry-exceptional slopes. Every one of these knots has exactly two alternating slopes; these were already found in~\cite{BKM_alt}. 
Moreover, we have identified explicit branching sets for $5$ more slopes among these knots. These other $5$ branching sets are given by links in manifolds different from $S^3$ and thus do not certify these slopes as dbc slopes. Since $4$ of these $5$ slopes have symmetry $\Z_2$, this is sufficient to conclude that those slopes are not dbc slopes.  The remaining slope is the one with symmetry $\Z_2^2$ and its other branching sets have not yet been determined. 
For the other symmetry-exceptional slopes our code could not identify explicit branching sets. These are marked as `unknown' in the table.
\end{proof}

\begin{rem}
    The fact that the branch set search did not terminate for some of these examples is because the branching sets are too complicated to appear in our search. We expect that these slopes are not dbc slopes, that is, we expect the quotient manifold is not $S^3$. However as the search was inconclusive, their status as dbc slopes remains unknown in Table~\ref{tab:BL_Knots_alt}.
\end{rem}

\section{The quasi-alternating slopes of torus knots}\label{sec:QA_torus}
In the previous sections, we dealt with finite
families of hyperbolic knots. In this section, we address an infinite family of non-hyperbolic knots, the collection of torus knots.
We classify the quasi-alternating slopes of torus knots. For coprime integers $a>b>1$, we denote by $T_{a,b}$ the $(a,b)$-torus knot. 
		
\begin{thm}\label{thm:torus_knotsQA}
A slope $p/q\in\Q$ is quasi-alternating for the torus knot $T_{a,b}$ with $a>b>1$ if and only if
\begin{equation*}
\frac{p}{q}>ab-\max\left\{\frac{a}{m},\frac{b}{n}\right\},
\end{equation*}
where integers $m,n$ satisfy $bm+an=ab+1$ and $1\leq m<a$ and $1\leq n <b$. 
\end{thm}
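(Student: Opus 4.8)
The plan is to realize each surgery $T_{a,b}(p/q)$ as the double branched cover of an explicit Montesinos link, to run that link through Issa's classification of quasi-alternating Montesinos links~\cite{Is18}, and to close the converse direction by ruling out competing branch loci via the uniqueness results already used in the proof of Theorem~\ref{thm:asym_QA}.

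\textbf{Step 1: the branching set.} Since $T_{a,b}$ is strongly invertible, the Montesinos trick~\cite{Montesinos_trick} produces a tangle exterior $\mathcal T$ of $T_{a,b}$ with $\dbc(\mathcal T(r))=T_{a,b}(r)$ for every slope $r$. Because the exterior of $T_{a,b}$ is Seifert fibered over the disk with two exceptional fibers of orders $a$ and $b$, the tangle $\mathcal T$ is a Montesinos tangle built from two rational tangles, and the integers $m,n$ of the statement enter as its Seifert invariants: the relation $bm+an=ab+1$ with $1\le m<a$, $1\le n<b$ is precisely the condition that this fibered structure closes up to $S^3$. Filling $\mathcal T$ with the rational tangle of slope $p/q$ then exhibits $T_{a,b}(p/q)$ as the double branched cover of a Montesinos link $L(p/q)$ with (generically) three rational tangles, the first two of slopes $m/a$ and $n/b$, with the third tangle and the integer part of the Montesinos datum depending affinely on $p/q$. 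I would record $L(p/q)$ in the normal form demanded by Issa's theorem.

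\textbf{Step 2: degenerate slopes and the L-space threshold.} When $|p-abq|\le 1$, the manifold $T_{a,b}(p/q)$ is a lens space or a connected sum of two lens spaces, hence the double branched cover of a non-split alternating link (a two-bridge link, or a connected sum of two-bridge links), which is quasi-alternating; and a direct check, using $\max\{a/m,b/n\}>1$, shows every such slope satisfies $p/q>ab-\max\{a/m,b/n\}$, consistent with the claim. Separately, double branched covers of quasi-alternating links are L-spaces~\cite{OS05,QA_thin}, while $T_{a,b}(p/q)$ is an L-space if and only if $p/q\ge 2g(T_{a,b})-1=ab-a-b$~\cite{genus_L_spaces}; since $\max\{a/m,b/n\}\le\max\{a,b\}<a+b$ we have $ab-a-b<ab-\max\{a/m,b/n\}$, so all slopes with $p/q<ab-a-b$ are non-L-spaces and in particular not quasi-alternating, again in agreement with the asserted inequality.

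\textbf{Step 3: the generic case, and the converse.} When $|p-abq|\ge 2$, the link $L(p/q)$ is a genuine three-tangle Montesinos link with all tangle denominators at least $2$, hence non-trivial, so Issa's criterion~\cite{Is18} applies; substituting the tangle slopes $m/a$ and $n/b$ into that criterion and simplifying should turn it into exactly $p/q>ab-\max\{a/m,b/n\}$, with the two quantities $a/m$ and $b/n$ appearing as the two competing thresholds attached to the first two tangles so that the worst one produces the maximum. This matching and simplification — keeping Issa's normalization and mirrors straight — is the computational heart of the proof and the step I expect to be most delicate. For the converse, given $ab-a-b\le p/q\le ab-\max\{a/m,b/n\}$, the manifold $T_{a,b}(p/q)$ is a small Seifert fibered L-space, and, exactly as in the proof of Theorem~\ref{thm:asym_QA}, Motegi~\cite[Theorem 3.1]{Mo17} shows it is the double branched cover of a unique link (up to mirror), namely $L(p/q)$, which Issa's criterion has just shown is not quasi-alternating; hence the slope is not quasi-alternating. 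The remaining obstacle, beyond the Issa computation, is to verify that Motegi's hypotheses hold uniformly over the relevant slopes, handling separately any small values of $a,b$ or coincidences among the orders $a$, $b$, $|p-abq|$ where a second branch locus could in principle occur.
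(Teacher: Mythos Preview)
Your forward direction is essentially the paper's argument: realize $T_{a,b}(p/q)$ as the double branched cover of an explicit Montesinos link and feed it through Issa's classification~\cite{Is18}. The paper gets there slightly more directly, reading off the Seifert invariants from Moser's classification~\cite{Moser1971elementary} as $SFS[S^2:(a,d)(b,c)(abq-p,q)]$ with $ac+bd=-1$, then normalizing the associated Montesinos link to Issa's standard form $M(1;a/d,b/(b+c),(abq-p)/q)$; your $m,n$ are the paper's $a-d,-c$. For slopes $p/q\ge ab-1$ the paper invokes~\cite{Wa11} rather than treating large $|p-abq|$ through Issa, but this is a cosmetic difference.

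The real divergence, and the gap, is your converse. You propose to rule out other quasi-alternating branch loci by invoking Motegi's uniqueness theorem~\cite{Mo17} across the whole range of small Seifert fibered surgeries. That theorem has genuine hypotheses on the multiplicities of the exceptional fibers, and small Seifert fibered spaces \emph{can} arise as double branched covers of non-Montesinos links (Seifert links, for instance); you flag this yourself as an obstacle, and it is one. The paper avoids the issue entirely: Issa proves in the same paper (\cite[Corollary~1]{Is18}) that a Seifert fibered space over $S^2$ is the double branched cover of a quasi-alternating link if and only if it is the double branched cover of a quasi-alternating \emph{Montesinos} link. That single sentence closes the converse without any uniqueness argument or case analysis, and it is the tool you want here rather than Motegi.
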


Since a Dehn surgery on a torus knot is either a lens space, a connected sum of lens spaces, or a small Seifert fibered space~\cite{Moser1971elementary} and lens spaces are double branched covers of $2$-bridge links (which are alternating), we only need to understand which small Seifert fibered spaces are double branched covers of quasi-alternating links. For that, we appeal to Issa's work on quasi-alternating Montesinos links~\cite{Is18} and Moser's classification of surgeries on torus knots~\cite{Moser1971elementary}.
		
\begin{thm} [Moser~\cite{Moser1971elementary}]\label{thm:Moser}
	The $(p/q)$-surgery on $T_{a,b}$ is diffeomorphic to the manifold 
	\begin{align*}
		SFS[S2:(a,d)(b,c)(abq-p,q)].
	\end{align*}
	with surgery diagram of Figure~\ref{fig:SFS} where $c$ and $d$ are integers such that $ac+bd=-1$.
	If $p/q \neq ab$, then this is a Seifert fibered space.
	If $p/q=ab$ then it is a generalized Seifert fibered space diffeomorphic to $L(a,b)\#L(b,a)$.
\end{thm}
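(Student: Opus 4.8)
The plan is to exploit the classical fact that $T_{a,b}$ is a regular fiber of the Seifert fibration of $S^3$ over the $2$-orbifold $S^2(a,b)$, and to read off the effect of Dehn filling directly from this fibration. Concretely, write $S^3=\{(z_1,z_2):|z_1|^2+|z_2|^2=1\}$ with the circle action $\theta\cdot(z_1,z_2)=(e^{ia\theta}z_1,e^{ib\theta}z_2)$. This fibration has exactly two exceptional fibers (the loci $z_1=0$ and $z_2=0$), of multiplicities $a$ and $b$, which are the cores of the two Heegaard solid tori, and every regular fiber is an $(a,b)$-torus knot. Removing an open fibered neighborhood of one regular fiber $K=T_{a,b}$ realizes the knot exterior $M=S^3\setminus\nbhd(K)$ as a Seifert fibered space over the disk $D^2$ with two cone points of orders $a$ and $b$.

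The crux is to locate the fiber slope $h$ on $\bdry M$ in the $S^3$-basis $\mu,\lambda$. Since nearby regular fibers are parallel push-offs of $T_{a,b}$ along the Heegaard torus, their linking number equals the surface framing $ab$; hence $h=ab\,\mu+\lambda$, i.e.\ the fiber slope is $ab$. From $h=(ab,1)$ I would record two consequences: first $\mu=(1,0)$ satisfies $\Delta(\mu,h)=1$, so the meridian is a section slope; second, for a general filling slope $\gamma=p\mu+q\lambda$ one has $\Delta(\gamma,h)=|p-abq|=|abq-p|$.

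Next I would apply the standard theory of Dehn filling a Seifert fibered space along a boundary torus. If $\gamma\neq h$ (equivalently $p/q\neq ab$), the fibration extends across the filling solid torus, whose core becomes an exceptional fiber of multiplicity $\Delta(\gamma,h)=|abq-p|$; rewriting $\gamma$ in the fiber--section basis via $\lambda=h-ab\mu$ gives $\gamma=q\,h-(abq-p)\mu$, which identifies the new Seifert pair as $(abq-p,q)$. The two original exceptional fibers persist with invariants $(a,d),(b,c)$ intrinsic to $M$. To pin these down I would fill along the meridian ($p/q=1/0$): this produces the closed Seifert fibered space over $S^2(a,b)$ with no third exceptional fiber, which must be $S^3$. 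Two-fiber Seifert fibered spaces over $S^2$ are lens spaces, and such a space is $S^3$ exactly when $|ac+bd|=1$; fixing orientation and normalization conventions yields $ac+bd=-1$. Assembling these facts gives $K(p/q)\cong SFS[S^2:(a,d)(b,c)(abq-p,q)]$, and converting the Seifert invariants into a star-shaped plumbing yields the surgery diagram of Figure~\ref{fig:SFS}.

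Finally, the degenerate case $p/q=ab$ is precisely filling along the fiber slope $h$, which never extends the fibration and instead yields a reducible manifold. Here I would use the structure of fiber-slope surgery on a Seifert fibration over $D^2(a,b)$: capping off the fiber splits the result along an essential sphere into the two lens-space summands contributed by the two exceptional fibers, giving $L(a,b)\#L(b,a)$; alternatively this follows from a direct Rolfsen-twist computation on the surgery diagram. I expect the main obstacle to be the sign and normalization bookkeeping: verifying that the fiber slope is $+ab$ in the chosen orientation, tracking the new Seifert invariant as exactly $(abq-p,q)$ rather than one of its equivalent normalizations, and deriving the precise constraint $ac+bd=-1$ with the correct sign from the $S^3$ condition. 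Obtaining exactly $L(a,b)\#L(b,a)$ (with the correct lens-space parameters), rather than merely ``a connected sum of two lens spaces,'' will likewise require care.
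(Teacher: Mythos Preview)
Your argument is correct and complete in outline, but it is genuinely different from the paper's proof. The paper argues purely by rational Kirby calculus: it starts from the star-shaped surgery diagram of Figure~\ref{fig:SFS}, performs a slam dunk of the $a/d$-framed component into the $0$-framed one, and then uses a sequence of Rolfsen twists (dictated by a continued fraction expansion of $-d/a$ with partial fraction $b/c$, using $ac+bd=-1$) to reduce the Hopf link to two $1/0$-framed components while carrying the third component to the $(a,b)$-curve on the Heegaard torus with framing $p/q$. In other words, the paper checks diagrammatically that the SFS surgery picture and $p/q$-surgery on $T_{a,b}$ coincide; the constraint $ac+bd=-1$ enters as the algebraic condition that makes the Rolfsen twists terminate at $1/0$.

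Your route instead uses the intrinsic Seifert fibration of $S^3$ over $S^2(a,b)$: you identify $T_{a,b}$ as a regular fiber, compute the fiber slope as $ab$, and then invoke the general fact that non-fiber fillings extend the fibration (adding an exceptional fiber with invariant read off from the fiber--section coordinates) while the fiber-slope filling is reducible. This is more conceptual and explains the appearance of $|abq-p|$ as the distance to the fiber slope, and it handles the degenerate $p/q=ab$ case uniformly. The paper's diagrammatic approach, by contrast, requires no Seifert-fibration theory and connects directly to the surgery picture of Figure~\ref{fig:SFS} needed downstream for the Montesinos link identification. Your own caveat is apt: the main work in your approach is the sign and normalization bookkeeping (matching Regina's SFS conventions and pinning down $ac+bd=-1$ with the correct sign), whereas the Kirby-calculus argument sidesteps this by working entirely at the level of framings.
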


Recall that we use Regina's notation for Seifert fibered spaces \cite{BBP+}. Since the notation and conventions used in \cite{Moser1971elementary} differ from ours, we briefly and roughly sketch a proof using rational Kirby calculus. Essentially the same proof can also be found in Lemma 4.4 of~\cite{Owens_Strle12}.
		
\begin{figure}[htbp] 
	\centering
	\def\svgwidth{0,6\columnwidth}
\begingroup%
  \makeatletter%
  \providecommand\color[2][]{%
    \errmessage{(Inkscape) Color is used for the text in Inkscape, but the package 'color.sty' is not loaded}%
    \renewcommand\color[2][]{}%
  }%
  \providecommand\transparent[1]{%
    \errmessage{(Inkscape) Transparency is used (non-zero) for the text in Inkscape, but the package 'transparent.sty' is not loaded}%
    \renewcommand\transparent[1]{}%
  }%
  \providecommand\rotatebox[2]{#2}%
  \newcommand*\fsize{\dimexpr\f@size pt\relax}%
  \newcommand*\lineheight[1]{\fontsize{\fsize}{#1\fsize}\selectfont}%
  \ifx\svgwidth\undefined%
    \setlength{\unitlength}{337.44700033bp}%
    \ifx\svgscale\undefined%
      \relax%
    \else%
      \setlength{\unitlength}{\unitlength * \real{\svgscale}}%
    \fi%
  \else%
    \setlength{\unitlength}{\svgwidth}%
  \fi%
  \global\let\svgwidth\undefined%
  \global\let\svgscale\undefined%
  \makeatother%
  \begin{picture}(1,0.42109724)%
    \lineheight{1}%
    \setlength\tabcolsep{0pt}%
    \put(0,0){\includegraphics[width=\unitlength,page=1]{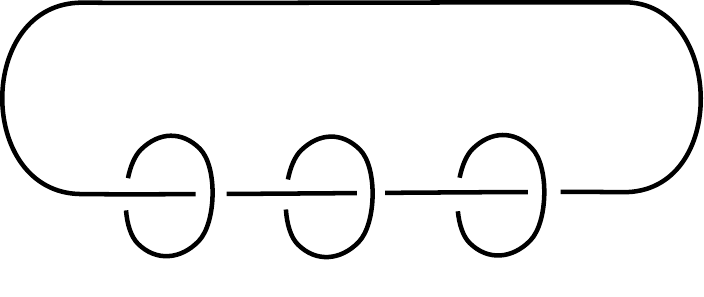}}%
    \put(0.22115315,0.01044094){\color[rgb]{0,0,0}\makebox(0,0)[lt]{\lineheight{0.1}\smash{\begin{tabular}[t]{l}$\frac{a}{d}$\end{tabular}}}}%
    \put(0.45283591,0.00737963){\color[rgb]{0,0,0}\makebox(0,0)[lt]{\lineheight{0.1}\smash{\begin{tabular}[t]{l}$\frac{b}{c}$\end{tabular}}}}%
    \put(0.64801262,0.01321451){\color[rgb]{0,0,0}\makebox(0,0)[lt]{\lineheight{0.1}\smash{\begin{tabular}[t]{l}$ab-\frac{p}{q}$\end{tabular}}}}%
    \put(0.47907139,0.36385091){\color[rgb]{0,0,0}\makebox(0,0)[lt]{\lineheight{0.1}\smash{\begin{tabular}[t]{l}$0$\end{tabular}}}}%
  \end{picture}%
\endgroup%

	\caption{A surgery diagram of the Seifert fibered space $SFS[S2:(a,d)(b,c)(abq-p,q)]$}
	\label{fig:SFS}
\end{figure}
		
\begin{proof}[Proof sketch of Theorem~\ref{thm:Moser}]
    Figure~\ref{fig:SFS} shows a surgery diagram for these manifolds.
	By a slam dunk on the knot with surgery coefficient $a/d$ into the $0$ framed component, we obtain a knot $K_0$ with surgery coefficient $-d/a$ linked by the components $K_1$ and $K$ with surgery coefficients $b/c$ and  $ab-p/q$ respectively. Together $K_0 \cup K_1$ form a Hopf link where $K$ is a curve in the Heegaard torus between them that is isotopic to the meridian of $K_0$ and the longitude of $K_1$.  Since $ac+bd=-1$, a sequence of Rolfsen twists on $K_0$ and $K_1$ informed from a continued fraction expansion of $-d/a$ with partial fraction $b/c$ reduces both the surgery coefficients of $K_0$ and $K_1$ to $1/0$, twists $K$ to the $(a,b)$ curve on the Heegaard torus, and reduces its surgery coefficient to $p/q$.
\end{proof}
	
In~\cite{Is18} Issa classifies quasi-alternating Montesinos knots as follows.
	
\begin{thm}[Issa~\cite{Is18}]\label{thm:QA_Montesinos}
	Let $L=M(e;t_1=\alpha_i/\beta_i,\ldots,t_n=\alpha_n/\beta_n)$ be a Montesinos link in standard form, i.e.\ such that $t_i>1$. Then $L$ is quasi-alternating if and only if
	\begin{enumerate}
		\item $e<1$, or
		\item $e=1$ and  $\frac{\beta_i}{\alpha_i}+\frac{\beta_j}{\alpha_j}>1$ for some $i\neq j$, or
		\item $e>n-1$, or
		\item $e=n-1$ and $\frac{\beta_i}{\alpha_i}+\frac{\beta_j}{\alpha_j}<1$ for some $i\neq j$.
	\end{enumerate}
\end{thm}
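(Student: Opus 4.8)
The plan is to prove the two implications separately, exploiting the mirror symmetry of the class $QA$ to halve the casework. Mirroring a Montesinos link sends $M(e;\alpha_1/\beta_1,\dots,\alpha_n/\beta_n)$ to $M\bigl(n-e;\,\alpha_1/(\alpha_1-\beta_1),\dots,\alpha_n/(\alpha_n-\beta_n)\bigr)$ once standard form is restored, and this interchanges conditions (1)$\leftrightarrow$(3) and (2)$\leftrightarrow$(4). Since a link is quasi-alternating iff its mirror is, it suffices to treat the large-$e$ side, namely cases (3) and (4), and deduce (1) and (2) by mirroring. Throughout I would use the determinant formula $\det(L)=\bigl|\alpha_1\cdots\alpha_n\bigr|\cdot\bigl|e-\sum_i \beta_i/\alpha_i\bigr|$, which follows from computing $|H_1|$ of the Seifert fibered double branched cover, as the arithmetic backbone of the sufficiency argument.

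For the \emph{if} direction I would induct on the total complexity $\sum_i \alpha_i$. The base cases are the extremes: when $e\ge n$ (case (3)) the standard Montesinos diagram isotopes to an alternating diagram, and a non-split alternating link is quasi-alternating; the symmetric statement handles $e\le 0$. For the boundary case $e=n-1$ with $\beta_i/\alpha_i+\beta_j/\alpha_j<1$ for some pair, I would resolve a single crossing at the end of the continued-fraction expansion of one tangle $\alpha_i/\beta_i$. Its two smoothings yield two Montesinos links: one in which that tangle is simplified (decreasing $\alpha_i$), and one in which the tangle degenerates to an integer absorbed into $e$, dropping the number of tangles to $n-1$. Using the determinant formula I would verify the additivity $\det(L)=\det(L_0)+\det(L_1)$ and check that both children again satisfy one of (1)--(4) (the first staying in case (4) or passing to an alternating case, the second landing in case (3)), so that they are quasi-alternating by induction. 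This exhibits $L$ as a quasi-alternating reduction at that crossing.

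For the \emph{only if} direction I would appeal to the Heegaard Floer obstruction. If $L$ is quasi-alternating then $\Sigma(L)$ is an L-space \cite{OS05}, and moreover, by Greene's sharpness obstruction \cite{Greene_thin_QA}, both $\Sigma(L)$ and $-\Sigma(L)=\Sigma(\overline{L})$ bound sharp negative-definite four-manifolds whose $d$-invariants are those forced by the quasi-alternating structure. Since $\Sigma(L)$ bounds the canonical star-shaped plumbing associated to the Seifert data, sharpness is equivalent (via Donaldson's diagonalization theorem) to the plumbing lattice embedding into a diagonal lattice $\Z^N$, and simultaneously the oppositely-oriented plumbing embedding into $\Z^M$. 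I would then carry out the combinatorial analysis of these two embeddings for the star-shaped graph, translating their joint existence into inequalities on $e$ and on the continued-fraction data $\beta_i/\alpha_i$. The outcome should be that admitting both embeddings forces $e\le 0$ or $e\ge n$, or the threshold equalities $e=1$, $e=n-1$ together with exactly the stated pairwise inequality on $\beta_i/\alpha_i+\beta_j/\alpha_j$; every other value of $e$ obstructs one of the two embeddings.

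The main obstacle is the necessity direction, specifically the boundary analysis at $e=1$ and $e=n-1$. Away from the threshold (the strict range $1<e<n-1$) the lattice obstruction should fail robustly, but at the boundary the existence of the embedding is genuinely sensitive to the fine arithmetic of the $\beta_i/\alpha_i$, and pinning down that it survives precisely when some pair satisfies $\beta_i/\alpha_i+\beta_j/\alpha_j<1$ (and fails otherwise) requires the delicate bookkeeping that separates quasi-alternating from merely homologically thin Montesinos links. On the sufficiency side, the parallel delicacy is choosing the crossing whose resolution keeps both children inside the quasi-alternating region while preserving determinant additivity; verifying this in the boundary cases is the crux of the inductive step.
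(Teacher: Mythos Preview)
This theorem is not proved in the paper; it is quoted from Issa~\cite{Is18} as an input to the proof of Theorem~\ref{thm:torus_knotsQA}. There is therefore no proof in the paper to compare your proposal against.

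That said, a brief assessment of your sketch on its own terms: the sufficiency direction is broadly the right shape---Issa does argue by induction on tangle complexity via a carefully chosen crossing resolution, and the mirror-symmetry reduction halving the casework is exactly the standard economy. Your necessity direction, however, is not a proof as it stands. The assertion that quasi-alternating implies both $\Sigma(L)$ and $-\Sigma(L)$ bound \emph{sharp} negative-definite four-manifolds, and that sharpness is ``equivalent (via Donaldson's diagonalization theorem) to the plumbing lattice embedding into a diagonal lattice $\Z^N$'', conflates several distinct conditions: Donaldson gives only the existence of an embedding, not sharpness, and sharpness is a strictly stronger constraint on the $d$-invariants. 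More to the point, the phrase ``the outcome should be'' concedes that the combinatorial analysis at the threshold values $e=1$ and $e=n-1$ has not been carried out, and that analysis is precisely where the content of the necessity direction lives. Issa's actual obstruction runs through the classification of Seifert fibered L-spaces over $S^2$ rather than a general dual-sharpness argument; if you want to reconstruct the proof, that is the route to follow.
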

	
In addition, it is shown in~\cite[Corollary~1]{Is18} that a Seifert fibered space is the double branched cover of a quasi-alternating link if and only if it is the double branched cover of a quasi-alternating Montesinos link. The Seifert fibered space $SFS[S2:(\alpha_1,\beta_1)\ldots(\alpha_n,\beta_n)]$ in Regina's notation is the double branched cover of the Montesinos knot $M(0;\alpha_i/\beta_i,\ldots,\alpha_n/\beta_n)$ which is in general not in Issa's standard form.
		
\begin{proof}[Proof of Theorem~\ref{thm:torus_knotsQA}]
	First, since a quasi-alternating slope $p/q$ is an L-space slope and $T_{a,b}$ is a positive L-space knot, we may assume that $p\geq q >0$.
		
	Now by Theorem~\ref{thm:Moser},  $(p/q)$-surgery on $T_{a,b}$ is diffeomorphic to the (generalized) Seifert fibered space
	\begin{align*}
	SFS[S2:(a,d)(b,c)(abq-p,q)] 
	&= \dbc(M\left(e=0;\frac{a}{d}, \frac{b}{c}, \frac{abq-p}{q}\right)) \\
	&= \dbc(M\left(e=1;\frac{a}{d}, \frac{b}{b+c}, \frac{abq-p}{q}\right))    
	\end{align*}
	where $c$ and $d$ are any integers such that $ac+bd=-1$. Since $a>b>0$ it follows that $d>0>c$. 
	Furthermore the integers $c,d$ satisfying $ac+bd=-1$ may be chosen so that $a>d>0$ and $b>-c$, and hence $\frac{a}{d}>1$ and $\frac{b}{b+c}>1$. 
  
	Also, notice that if $p/q=ab-1$ then the above manifold is a lens space, and thus the slope $ab-1$ is quasi-alternating. From~\cite{Wa11} it follows that any slope $p/q\geq ab-1$ is quasi-alternating. Otherwise, we have $p/q<ab-1$ so that $\frac{abq-p}{q}>1$ and 
	Theorem~\ref{thm:QA_Montesinos} informs us that the Montesinos link
	$M\left(1;a/d, b/(b+c), ab-p/q\right)$
	is quasi-alternating if and only if one of the following is fulfilled:
	\begin{enumerate}
		\item $\frac{d}{a}+\frac{b+c}{b} >1$
		\item $\frac{b+c}{b} + \frac{q}{abq-p}>1$     
		\item $\frac{d}{a} + \frac{q}{abq-p}>1$
	\end{enumerate}
	(1) cannot occur because $ad+bc=-1$.  Either (2) or (3) occurs if and only if either $\frac{q}{abq-p}>\frac{-c}{b}$ or  $\frac{q}{abq-p}>\frac{a-d}{a}$.   Since $abq-p>q>0$, $a>d>0$, and $b>-c>0$, this is occurs if and only if $\frac{p}{q} > ab-\max\{\frac{a}{a-d},\frac{b}{-c}\}$. 
	Thus, with \cite[Corollary~1]{Is18},  this gives the claimed classification of quasi-alternating slopes once we set $m=a-d$ and $n=-c$.
\end{proof}
\begin{rem}
For a knot $K$ in the 3-sphere, Owens and Strle \cite{Owens_Strle12} define $m(K)$ to be the infimum of all $p/q> 0$ such that $K(p/q)$ is the boundary of a smooth negative definite 4-manifold. It turns out that the lower bound in Theorem~\ref{thm:torus_knotsQA} coincides with $m(T_{a,b})$ with $a>b>1$. That is $p/q\in\Q$ is a quasi-alternating slope for $T_{a,b}$ if and only if $p/q>m(T_{a,b})$. Since every quasi-alternating surgery is the boundary of a smooth negative-definite 4-manifold, we have the inequality $m(T_{a,b})\leq ab-\max\left\{\frac{a}{m},\frac{b}{n}\right\}$. Equality readily follows by comparison with the formula for $m(T_{a,b})$ given in  \cite[Theorem~2]{Owens_Strle12}.
\end{rem}

\section{Formal L-space surgeries}\label{sec:FLSsurgeries}
In this section, we prove the results about formal L-space surgeries. As discussed in Remark~\ref{rem:formal_Lspace_QA} formal L-spaces can be seen as the $3$-manifold analogues of quasi-alternating links. First, we show that if the set of formal L-space surgery slopes is non-empty, then it is necessarily infinite.

\begin{thm}\label{thm:plusone}
    If $K(r)$ is a formal L-space for some $r>0$, then $K(r +n)$ is a formal L-space for every $n\in \Z_{\geq 0}$.
\end{thm}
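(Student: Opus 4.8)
The plan is to reduce to the case $n=1$ and iterate. Since $r+1>0$ whenever $r>0$, it suffices to show that $K(r)\in\F$ and $r>0$ imply $K(r+1)\in\F$; induction on $n$ then finishes. Writing $r=p/q$ in lowest terms with $p,q\geq 1$, I would exhibit $K(r+1)$ as the distinguished vertex $Y$ of a triad $(Y,Y_0,Y_1)$ in which $Y_0=K(r)$ is a formal L-space by hypothesis, $Y_1$ is a lens space --- hence a formal L-space, being the double branched cover of a $2$-bridge (so alternating, so quasi-alternating) link --- and the numerical condition $|H_1(Y)|=|H_1(Y_0)|+|H_1(Y_1)|$ holds automatically.

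The triad I have in mind comes from adding a meridian. Let $\mu$ be a meridian circle of $K$, and let $M$ be the exterior of the two-component link $K\cup\mu$ with the cusp coming from $K$ Dehn filled along the slope $p/q$; the single remaining boundary torus of $M$ is the cusp of $\mu$, on which I use the standard meridian--longitude basis of the unknot $\mu$ (so that $\infty=1/0$ refills $\mu$ trivially). First I would check that the three slopes $\infty$, $-1$, $0$ on $\partial M$ are pairwise of distance one, so that $(M(\infty),M(-1),M(0))$ is a triad. Then I would identify the three fillings: $M(\infty)=K(p/q)=K(r)$, with $|H_1|=p$; $M(-1)=K\!\left(\tfrac{p+q}{q}\right)=K(r+1)$, because blowing down the $(-1)$-framed unknot $\mu$ performs a single twist along the disc it bounds, which meets $K$ once and so leaves $K$ unchanged as a knot while shifting its surgery slope $p/q$ to $p/q+1$, with $|H_1|=p+q$; and $M(0)=(K\cup\mu)(p/q,0)$, which I claim is a lens space $L(q,\ast)$ with $|H_1|=q$, consistently with the direct computation $H_1\big((K\cup\mu)(p/q,0)\big)\cong\Z/q$.

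Granting these identifications, the numerical condition for the triad $(K(r+1),\,K(r),\,(K\cup\mu)(p/q,0))$ reads $|H_1(K(r+1))|=p+q=|H_1(K(r))|+|H_1((K\cup\mu)(p/q,0))|$, so the closure property defining $\F$ gives $K(r+1)\in\F$, completing the inductive step. \textbf{The step I expect to be the main obstacle} is the identification of $M(0)=(K\cup\mu)(p/q,0)$ as a lens space: one must run the Kirby calculus carefully --- morally, the $0$-surgery on the meridian $\mu$ converts the $(p/q)$-framed knot $K$ into a $(q/p)$-framed unknot --- and keep the conventions (the twist accompanying the blow-down, orientations, and the precise lens space) consistent so that both the distance-one condition and the homological additivity $|H_1|=p+q$ come out exactly right. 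Everything else is routine bookkeeping with the slope conventions fixed in the preliminaries; note in particular that the argument uses $p>0$ in an essential way (this is where $|p+q|=p+q$, hence the additivity, fails once $r<0$).
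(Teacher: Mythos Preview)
Your proposal is correct and follows essentially the same route as the paper: both arguments exhibit the triad $\bigl(K(r+1),\,K(r),\,\text{lens space}\bigr)$ obtained from three distance-one fillings on a meridian $\mu$ of $K$ (the paper's Figure~\ref{fig:triad1} is exactly your link $K\cup\mu$, presented with the slam-dunk unpacked so that $K$ carries an integer framing and an auxiliary $q/r$-framed meridian encodes the fractional part). Your anticipated ``main obstacle'' is handled in the paper simply by noting that the $0$-filling yields $U(q/r)$, i.e.\ a lens space, which matches your computation that $(K\cup\mu)(p/q,0)\cong L(q,\ast)$; so there is no real difficulty there beyond the bookkeeping you describe.
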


\begin{proof}
    Let $K$ be a knot in $S^3$ and let $r=p/q\in \Q$ be a slope with $q\geq 1$. As illustrated in Figure~\ref{fig:triad1}, the manifold $K(p/q)$ is contained in a triad consisting of $Y_0=U(q/t)$, $Y_1=K(p/q)$, and $Y=K((p+q)/q)$, where $p/q=n-t/q$ with $n\in \Z$ and an integer $t$ with $q>t\geq 0$. Recall that $K(p/q)$ has first homology isomorphic to $\Z_p$ and thus the triad $(Y_0,Y_1,Y)$ fulfills the homology condition in the definition of a formal L-space. Since every lens space is a formal L-space, we see that $K((p+q)/q)=K(p/q + 1)$ is a formal L-space, whenever $K(p/q)$ is a formal L-space and $p/q>0$. Thus the theorem follows by induction.
\end{proof}

\begin{figure}[htbp]
\vspace{1cm}
  \centerline{
    \begin{overpic}[width=0.99\textwidth]{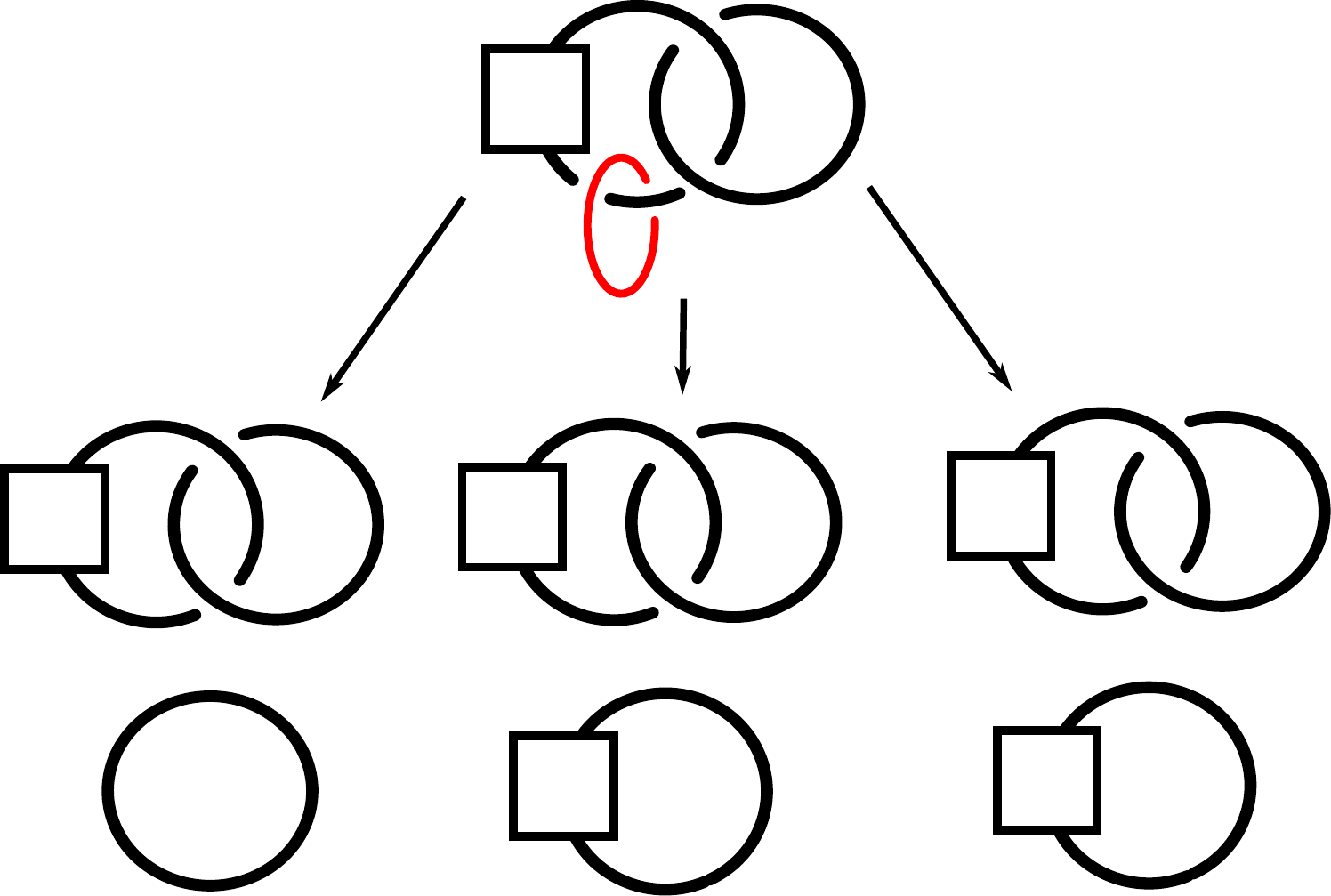}
      \put (2.5,27) {\Large $K$}
      \put (37,27) {\Large $K$}
      \put (73.5,27.5) {\Large $K$}
      \put (40,7) {\Large $K$}
      \put (77,7) {\Large $K$}
      \put (38,58) {\Large $K$}
      \put (12,16.5) {\Large $\cong$}
      \put (46,16.5) {\Large $\cong$}
      \put (84,16.5) {\Large $\cong$}
      \put (8,-3) {\Large $Y_0\cong U(q/t)$}
      \put (40,-3) {\Large $Y_1\cong K(p/q)$}
      \put (72,-3) {\Large $Y\cong K((p+q)/q)$}
      \put (3,35) {$\frac{1}{0}$}
      \put (38,34) {$n$}
      \put (77,37) {$n+1$}
      \put (60,11) {$\frac{p}{q}$}
      \put (94,11) {$\frac{p+q}{q}$}
      \put (40,66) {$n$}
      \put (28,34) {$\frac{q}{t}$}
      \put (63,34) {$\frac{q}{t}$}
      \put (98,37) {$\frac{q}{t}$}
      \put (65,66) {$\frac{q}{t}$}
      \put (25,11) {$\frac{q}{t}$}
      \put (40,48) {\Large $C$}
    \end{overpic}
  }
  \vspace{0.5cm}
\caption{The triad obtained by performing $0$, $1/0$ and $-1$ surgery on the knot $C$}
\label{fig:triad1}
\end{figure}
When paired with the existence of quasi-alternating surgeries on asymmetric knots, this necessarily implies the existence of many formal L-spaces that are not obtained as the double branched cover of any link in $S^3$.
\begin{cor}\label{cor:asymmetric_formal_L_spaces}
    There exist infinitely many asymmetric formal L-spaces. In particular, there are infinitely many formal L-spaces that are not the double branched cover of any link in $S^3$.
\end{cor}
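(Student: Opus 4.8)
The plan is to realize all of these manifolds as large Dehn fillings of a single hyperbolic knot having trivial symmetry group, bootstrapping from one formal L-space surgery via Theorem~\ref{thm:plusone} and then killing the symmetries of the fillings with the Futer--Purcell--Schleimer estimate. Concretely, I would fix $K$ to be one of the Baker--Luecke asymmetric hyperbolic L-space knots of Theorem~\ref{thm:BLknots} (an asymmetric census knot from $\calA$ would serve equally well after passing to its mirror, but the Baker--Luecke knots come with large \emph{positive} integer alternating slopes, which streamlines the argument). Let $r_0\in\Z_{>0}$ be one of the alternating surgery slopes of $K$, so that $K(r_0)$ is the double branched cover of an alternating link. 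By the implications alternating $\implies$ quasi-alternating $\implies$ formal L-space recorded in the introduction, $K(r_0)$ is a formal L-space.

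Applying Theorem~\ref{thm:plusone} with $r=r_0$ then shows that $K(r_0+n)$ is a formal L-space for every $n\in\Z_{\ge 0}$. Next I would verify that infinitely many of these fillings are asymmetric. The exterior $M_K$ of $K$ has trivial symmetry group, and the normalized length $L(r_0+n)$ of the slope $r_0+n$ in $\partial M_K$ grows without bound as $n\to\infty$ (only finitely many slopes in a cusp have length below any given constant), so $L(r_0+n)\ge C(M_K)$ for all sufficiently large $n$, where $C(M_K)$ is the constant of Theorem~\ref{thm:FPS}. For such $n$, Corollary~\ref{cor:slope} tells us that $K(r_0+n)$ is hyperbolic and that every symmetry of it restricts to a symmetry of $M_K$; since the latter has trivial symmetry group, so does $K(r_0+n)$. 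A closed hyperbolic $3$-manifold with trivial symmetry group admits no non-trivial orientation-preserving involution (its isometry group is trivial, and a branched-covering deck transformation would be a non-trivial isometry by the orbifold theorem, as in Lemma~\ref{lem:dbc_lemma}), so in particular $K(r_0+n)$ is not the double branched cover of any link in $S^3$. Finally, these manifolds are pairwise non-homeomorphic because $|H_1(K(r_0+n);\Z)|=r_0+n$, which yields the claimed infinitude of examples.

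The only real point of substance is the last qualitative step, upgrading ``$K$ is an asymmetric knot'' to ``$K(r_0+n)$ is an asymmetric manifold'', and that is exactly what Corollary~\ref{cor:slope} provides once the normalized slope length clears the Futer--Purcell--Schleimer bound; everything else is bookkeeping and standard homology computation. The one place requiring a little care arises if one prefers to run the argument with the knots of $\calA$, whose quasi-alternating slopes need not be positive: there one should first replace $K$ by its mirror (which remains an asymmetric hyperbolic L-space knot) and observe that orientation reversal carries formal L-spaces to formal L-spaces, since it sends triads to triads and preserves the orders of first homology groups, so that one may arrange a formal L-space surgery at a slope $r_0>0$ before invoking Theorem~\ref{thm:plusone}.
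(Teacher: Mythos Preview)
Your proof is correct and follows essentially the same approach as the paper: pick an asymmetric hyperbolic L-space knot with a quasi-alternating (or alternating) surgery, use Theorem~\ref{thm:plusone} to propagate formal L-space surgeries to all larger integer slopes, and then observe that sufficiently large fillings are asymmetric. The paper's own proof is considerably more terse---it simply asserts that ``for all $r$ sufficiently large $K(r)$ is an asymmetric L-space'' without spelling out the appeal to Corollary~\ref{cor:slope}---so your version actually supplies the details (the FPS bound, the pairwise non-homeomorphism via $|H_1|$, and the mirroring remark for the census knots) that the paper leaves implicit.
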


\begin{proof}
Let $K$ be any asymmetric knot admitting an alternating or quasi-alternating surgery. For example, let $K$ be one of the asymmetric L-space knots in the SnapPy census or one of the Baker--Luecke asymmetric L-space knots. By Corollary~\ref{cor:computableexceptionalsymmetryslopes}, for all $r$ sufficiently large $K(r)$ is an asymmetric L-space. However, infinitely many of these surgeries are also formal L-spaces by Theorem~\ref{thm:plusone}.
\end{proof}
For integer surgeries yielding a formal L-space, Theorem~\ref{thm:plusone} can be strengthened.
\begin{thm}\label{thm:formal_integer}
    If $K(n)$ is a formal L-space for some $n\in \Z_{>0}$, then $K(r)$ is a formal L-space for every $r\geq n$.
\end{thm}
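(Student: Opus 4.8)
The plan is to bootstrap from Theorem~\ref{thm:plusone} (which already handles integer slopes) to all rational slopes $\geq n$ by an induction on the denominator, using the Farey triangles of the knot exterior $M_K=S^3\cut\nbhd(K)$. The starting observation is that for $p/q$ in lowest terms with $p>0$ one has $|H_1(K(p/q);\Z)|=p$, so the numerical hypothesis in clause~(2) of the definition of a formal L-space is just additivity of numerators. Since $n\geq 1$, every slope $r\geq n$ may be written as $p/q$ with $p\geq q\geq 1$, and I would show $K(p/q)\in\F$ for all such slopes by strong induction on $q$. For $q=1$ the slope is an integer $m\geq n$, and applying Theorem~\ref{thm:plusone} to the formal L-space $K(n)$ (recall $n>0$) gives $K(m)=K(n+(m-n))\in\F$.

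For the inductive step, fix $p/q\geq n$ with $q\geq 2$, and let $p_1/q_1<p/q<p_2/q_2$ be the two Stern--Brocot parents of $p/q$, characterized by $p_1+p_2=p$, $q_1+q_2=q$ and $p_2q_1-p_1q_2=1$; then $1\leq q_i<q$, and one checks directly that the three slopes $p/q$, $p_1/q_1$, $p_2/q_2$ are pairwise at distance one, so $\bigl(K(p/q),K(p_1/q_1),K(p_2/q_2)\bigr)$ is a triad obtained by Dehn filling $M_K$. The point to verify is that $p_i/q_i\geq n$ for $i=1,2$: the integer $n$ has denominator $1$, while any rational strictly inside a Farey interval has denominator at least the sum of the denominators of the endpoints, which is $q_1+q_2\geq 2$ here; hence $n$ cannot lie strictly between $p_1/q_1$ and $p_2/q_2$, and since $p/q$ does lie strictly between them with $p/q>n$, the left endpoint must satisfy $p_1/q_1\geq n$ (while $p_2/q_2>p/q>n$ is automatic). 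In particular $p_1,p_2>0$, so $|H_1(K(p/q))|=p=p_1+p_2=|H_1(K(p_1/q_1))|+|H_1(K(p_2/q_2))|$. By the inductive hypothesis (or the base case, when some $q_i=1$), both $K(p_1/q_1)$ and $K(p_2/q_2)$ are formal L-spaces, so clause~(2) of the definition gives $K(p/q)\in\F$, completing the induction and hence the theorem.

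I do not anticipate a serious obstacle; the only things needing care are purely combinatorial, namely that the two Stern--Brocot parents have strictly smaller denominators, stay in the range $[n,\infty)$, and have numerators summing to $p$ with the correct sign so that the first-homology orders add rather than cancel. All three follow from positivity of the slopes involved (everything in sight is $\geq n\geq 1$), which is precisely why one needs the hypothesis $n>0$. One could equivalently run the same argument without invoking Theorem~\ref{thm:plusone} at all, by seeding the induction with $K(\infty)=S^3\in\F$ and $K(n)\in\F$ and noting that the full subgraph of the Farey graph on $\{\infty\}\cup\{\,r\in\Q:r\geq n\,\}$ is built up from the edge $\{n,\infty\}$ by attaching triangles; but citing Theorem~\ref{thm:plusone} for the integer slopes shortens the exposition.
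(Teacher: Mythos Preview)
Your proof is correct and is essentially the same argument as the paper's: both reduce to the integer case via Theorem~\ref{thm:plusone} and then climb through the Farey graph using triads whose slopes are the two ``parents'' of $p/q$. The paper packages this as a double induction on the length of the negative continued fraction $p/q=[a_1,\dots,a_\ell]^-$ and on the last coefficient $a_\ell$, with the triad exhibited via a chain-link surgery diagram; your Stern--Brocot parents $p_1/q_1,p_2/q_2$ are exactly the paper's $[a_1,\dots,a_{\ell-1}]^-$ and $[a_1,\dots,a_\ell-1]^-$, so the triads coincide. Your single induction on the denominator and the Farey-interval denominator bound to keep both parents $\geq n$ is arguably a cleaner bookkeeping device than the paper's double induction, while the paper's chain-link picture makes the triad more visually explicit; mathematically the two proofs are the same.
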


\begin{proof}
Suppose that $K(n)$ is a formal L-space for some $n\in \Z_{>0}$. For simplicity, we may assume that $n\geq 2$.  
Recall that every rational number $p/q > 1$ has a unique negative continued fraction expansion of the form
\[
p/q=[a_1,\dots, a_\ell]^-,
\]
where $a_i\geq 2$ for all $i$. We call $\ell$ the length of this continued fraction expansion. Note also that if $p/q>n$, then $a_1>n$. We prove the theorem by induction on $\ell$ and then by induction on the final coefficient $a_\ell$. The base case for this induction is the case $\ell=1$. In this case, $p/q$ is an integer and Theorem~\ref{thm:plusone} gives the necessary result.

Thus suppose that $p/q>n$ has continued fraction expansion
\[
p/q=[a_1,\dots, a_{\ell -1}, a_\ell]^-,
\]
where $a_i\geq 2$ for all $i$ and $\ell>1$. Let $p_0/q_0$ be the rational number
\[
p_0/q_0=[a_1,\dots, a_{\ell -2},a_{\ell -1}]^-.
\]
Since $a_i\geq 2$ for all $i$, we have that $p_0/q_0>a_1-1\geq n$ and the length of this continued fraction is $\ell-1$. Thus, we may inductively suppose that $K(p_0/q_0)$ is a formal L-space.
Let $p_0/q_0$ be the rational number
\[
p_1/q_1=[a_1,\dots, a_{\ell -1},a_{\ell}-1]^-.
\]
Note that $p_1/q_1$ satisfies
\[
p_1/q_1=[a_1,\dots,a_{k-1}, a_{k}-1]^-,
\]
where $k\leq \ell$ is maximal such that $a_k>2$.
In particular, $p_1/q_1\geq a_1-1\geq n$ and by induction either on the length or final coefficient of the continued fraction expansion we may assume that $K(p_1/q_1)$ is a formal L-space.

On the other hand, Figure~\ref{fig:triad2} shows that the manifolds $K(p_0/q_0)$, $K(p_1/q_1)$ and $K(p/q)$ sit in a surgery triad. Furthermore standard continued fraction identities show that $\frac{p}{q}=\frac{p_0 + p_1}{q_1+q_0}$. Thus we see that the homology groups of these manifolds satisfy the relation from the definition of a formal L-space. Since $K(p_0/q_0)$ and $K(p_1/q_1)$ are formal L-spaces, it follows that $K(p/q)$ is also a formal L-space. 
\end{proof}

The fact that a Seifert fibered space is a formal L-space if and only if it is the double branched cover of a quasi alternating link \cite{Is18} implies that for torus knots the set of formal L-space slopes coincides exactly with the set of quasi-alternating slopes.
\begin{cor}\label{cor:formal_torus}
    For positive torus knots the set of formal L-space surgeries agrees with the set of quasi-alternating surgeries given in Theorem~\ref{thm:torus_knotsQA}.
\end{cor}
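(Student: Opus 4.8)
The plan is to prove the two inclusions of slope sets separately, one of which is essentially free. Since the implications recorded in the introduction give quasi-alternating $\implies$ formal L-space, Theorem~\ref{thm:torus_knotsQA} already shows that every slope $p/q$ with $p/q>ab-\max\{a/m,b/n\}$ is a formal L-space slope of the positive torus knot $T_{a,b}$. So the whole content is the reverse inclusion: one must show that if $T_{a,b}(p/q)$ is a formal L-space then $p/q>ab-\max\{a/m,b/n\}$.

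First I would reduce to small Seifert fibered spaces. A formal L-space is in particular an L-space, and the L-space slopes of the positive L-space knot $T_{a,b}$ are exactly $[2g-1,\infty)=[ab-a-b,\infty)$ by~\cite{genus_L_spaces}. One has $ab-a-b<ab-\max\{a/m,b/n\}<ab-1$ (since $1<\max\{a/m,b/n\}\le\max\{a,b\}=a<a+b$), so it suffices to prove that $T_{a,b}(p/q)$ fails to be a formal L-space for every slope in the gap $ab-a-b\le p/q\le ab-\max\{a/m,b/n\}$. By Moser's classification (Theorem~\ref{thm:Moser}), each such filling lies strictly below $ab-1$, so it is a genuine small Seifert fibered space $SFS[S^2:(a,d)(b,c)(abq-p,q)]$ with three exceptional fibers, not a lens space and not a connected sum of lens spaces. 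The problem thus becomes: among small Seifert fibered spaces over $S^2$ with three exceptional fibers, which ones are formal L-spaces?

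The main step is to answer this by passing to the star-shaped plumbing graph that presents $SFS[S^2:(a,d)(b,c)(abq-p,q)]$, extracting from \cite{Greene2016Strong} the combinatorial condition on that graph which is equivalent to the manifold belonging to $\mathcal{F}$, and matching it against the arithmetic inequalities that the proof of Theorem~\ref{thm:torus_knotsQA} distills out of Issa's criterion (Theorem~\ref{thm:QA_Montesinos}) together with \cite[Corollary~1]{Is18}. I expect the two conditions to coincide, so that such a Seifert fibered space is a formal L-space precisely when it is the double branched cover of a quasi-alternating Montesinos link. Granting this, the continued-fraction bookkeeping already carried out in the proof of Theorem~\ref{thm:torus_knotsQA} --- including the standard shift $M(0;a/d,b/c,(abq-p)/q)=M(1;a/d,b/(b+c),(abq-p)/q)$ into Issa's standard form --- shows the relevant inequality fails exactly when $p/q\le ab-\max\{a/m,b/n\}$, which completes the proof.

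The hard part is this last structural input: obstructing a Seifert fibered space from being a formal L-space. The genus bound of Proposition~\ref{prop:formal_bound} is not strong enough on its own --- for $T_{5,3}$ it only rules out $p/q\lesssim 10.37$, whereas the quasi-alternating threshold is $12.5$, so there are gap slopes it cannot reach. Thus the argument must genuinely exploit the plumbing/lattice description of membership in $\mathcal{F}$ (equivalently, the theory of strong L-spaces), not merely a numerical invariant of the filled manifold; verifying that this description specializes on star-shaped graphs to exactly Issa's arithmetic condition is where the real care is needed.
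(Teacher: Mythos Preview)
Your overall structure is sound --- the easy inclusion is indeed free, and the reduction to small Seifert fibered spaces over $S^2$ is correct --- but you have misidentified where the work lies. The ``hard part'' you single out, namely showing that a small Seifert fibered space over $S^2$ is a formal L-space if and only if it is the double branched cover of a quasi-alternating (Montesinos) link, is not something you need to extract from~\cite{Greene2016Strong} and verify by hand against Issa's arithmetic; it is already a theorem in~\cite{Is18}. Issa proves exactly this equivalence as part of his classification of quasi-alternating Montesinos links, and the paper's proof of the corollary is simply a direct citation of that fact together with Moser's classification of torus knot surgeries. So what you describe as the place ``where the real care is needed'' is in fact already done in the literature you cite elsewhere in your argument.

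Consequently, the paper's proof is a two-sentence appeal to Moser and Issa, with no need for the gap-interval reduction, the continued-fraction bookkeeping, or the plumbing/strong-L-space analysis you outline. Your route is not wrong --- your expectation that the Greene condition on star-shaped graphs matches Issa's inequalities is correct, since that is essentially what Issa establishes --- but it amounts to reproving a known result rather than invoking it. The only thing to change in your write-up is to replace the speculative paragraph about~\cite{Greene2016Strong} with a reference to the relevant statement in~\cite{Is18}.
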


\begin{proof}
     Surgery on a torus knot is a Seifert fibered space over $S^2$ (including the possibility of a lens space) or a connected sum of two lens spaces. From Theorem~\ref{thm:Moser} it follows that $T_{a,b}(p/q)$ is a connected sum of two lens spaces if and only if $p/q=ab$ which is by Theorem~\ref{thm:torus_knotsQA} a quasi-alternating slope and thus also a formal L-space slope. 
     In his classification of quasi-alternating Montesinos knots, Issa showed that a Seifert fibered space over $S^2$ is a formal L-space if and only if it is the double branched cover of a quasi-alternating link \cite{Is18}. 
\end{proof}
Finally we check that every non-trivial L-space knot has L-space surgery slopes that are not formal L-space surgeries.
\begin{prop}\label{prop:formal_bound}
    Let $K$ be a non-trivial knot and $r>0$ be a formal L-space surgery. Then
    \[ r > 2g(K) + \frac{1}{2}\left(\sqrt{1+8g(K)}-1\right).\]
\end{prop}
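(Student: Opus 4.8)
The plan is to combine the genus bound for L-space surgeries with the rigidity built into the definition of a formal L-space as an iterated triad over $S^3$, organizing the argument as an induction that mirrors the proofs of Theorems~\ref{thm:plusone} and~\ref{thm:formal_integer}.

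Since every formal L-space is an L-space, $K(r)$ is an L-space, so $K$ is an L-space knot; orienting $K$ so that $r>0$, the set of L-space slopes of $K$ is $[2g-1,\infty)$ by \cite{genus_L_spaces}, where $g:=g(K)\geq 1$ as $K$ is non-trivial, hence $r\geq 2g-1$. What remains is the extra term, and I would first record that the inequality to be proved is equivalent to $\binom{r-2g+1}{2}>g$, using the convention $\binom{x+1}{2}=\tfrac12 x(x+1)$ for real $x$: both sides of $r>2g+\tfrac12(\sqrt{1+8g}-1)$ are then at least $1$, so squaring $2(r-2g)+1>\sqrt{1+8g}$ gives exactly $(r-2g)(r-2g+1)>2g$. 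Thus the target is a \emph{triangular-number} bound, which is the tell-tale sign that the proof should count the internal vertices of the binary triad tree realizing $K(r)$ as a formal L-space.

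Write $r=p/q$ in lowest terms with $q\geq 1$, so $K(r)$ is a formal L-space of order $p$. The naive move is to peel off the outermost triad $(K(r),Y_0,Y_1)$ from the definition: if it could be taken to be a Dehn-surgery triad on $K$ itself, then $Y_0=K(p_0/q_0)$ and $Y_1=K(p_1/q_1)$ would be formal L-space surgeries with $p=p_0+p_1$, $q=q_0+q_1$ and $|p_0q_1-p_1q_0|=1$ (the Farey parents of $r$), and one could induct on the length and last entry of the negative continued fraction of $r$ exactly as in Theorem~\ref{thm:formal_integer}, with the base case $r\in\Z$ governed by Theorem~\ref{thm:plusone}. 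The difficulty is that the torus-boundary manifold appearing in that triad need not be the exterior of $K$. To bypass this I would argue Floer-theoretically: in any triad of L-spaces whose first Betti orders add, the surgery exact triangle degenerates (by a rank count) to a short exact sequence of $\widehat{\HF}$-groups, so the correction terms of $K(r)$ are determined, up to the grading shifts carried by the cobordism maps, by those of the two formal children; iterating down to the $p$ leaves of the tree (each a copy of $S^3$, with vanishing correction term) yields a rigid relation between $\{d(K(r),\mathfrak s)\}$ and the shape of the tree. On the other hand, the rational surgery formula of \cite{OS05} expresses $d(K(r),\mathfrak s)$ as $d(L(p,q),\mathfrak s)$ minus twice a torsion coefficient $t_i(K)$ of $K$, and these $t_i$ vanish for $i\geq g$, are positive and non-increasing for $0\leq i<g$, and have total size of order $g$. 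Matching these two descriptions is where the genus enters: a triad tree with $p$ leaves has only enough room to accommodate the depression pattern forced by a genus-$g$ L-space knot when $\binom{r-2g+1}{2}>g$.

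This bound is essentially optimal, so the argument cannot afford slack: for $T_{n,n+1}$ one has $g=\binom n2$, hence $\tfrac12(\sqrt{1+8g}-1)=n-1$, while Theorem~\ref{thm:torus_knotsQA} together with Corollary~\ref{cor:formal_torus} gives that the formal L-space slopes of $T_{n,n+1}$ are exactly $\{p/q>n^2\}=\{p/q>2g+n\}$, larger than the stated threshold by a single unit. The main obstacle is precisely the quantitative step just described: converting the triad/correction-term bookkeeping into the exact triangular-number inequality rather than a weaker one. If the direct bookkeeping proves unwieldy, the natural fallbacks are to invoke the correction-term constraints satisfied by all formal L-spaces from \cite{Greene2016Strong} and specialize them to $K(r)$, or to run the same estimate through Walker's surgery formula for the Casson--Walker invariant, where the relevant knot invariant $a_2(K)=\tfrac12\Delta_K''(1)$ is again bounded in terms of $g$.
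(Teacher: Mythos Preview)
Your proposal sketches an interesting strategy but does not actually prove the proposition. The crucial step---``a triad tree with $p$ leaves has only enough room to accommodate the depression pattern forced by a genus-$g$ L-space knot when $\binom{r-2g+1}{2}>g$''---is asserted, not established. You describe how the correction terms of $K(r)$ are governed both by the triad tree (via iterated exact triangles) and by the rational surgery formula (via the torsion coefficients $t_i(K)$), but you never carry out the comparison, and it is not clear that the bookkeeping you propose would yield the sharp triangular-number bound rather than something weaker. Your own closing paragraph acknowledges this as the ``main obstacle,'' and the fallbacks you list (Casson--Walker, or unspecified constraints from \cite{Greene2016Strong}) are not developed either.

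The paper's proof is both shorter and conceptually different. The key observation you are missing is that every formal L-space bounds a smooth negative-definite $4$-manifold with $H_1=0$; this follows by an easy induction on the triad structure, attaching a $2$-handle at each step. Once you know this, you can pass from $K(r)$ to $K(\lceil r\rceil)$ (adding one more $2$-handle keeps the filling negative-definite) and then invoke Greene's genus bound from \cite{Greene2015genusbounds}, which says that if $K(n)$ bounds such a manifold for $n\in\Z_{>0}$ then $2g(K)\leq n-\sqrt{n}$. Solving this quadratic for $n=\lceil r\rceil$ and using $r>\lceil r\rceil-1$ gives exactly the stated inequality. No correction-term analysis or tree combinatorics is needed; Greene's lattice-embedding machinery does all the work.
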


\begin{proof}
    Like the double branched cover of a quasi-alternating link, a formal L-space is the boundary of a negative-definite manifold with $H_1=0$. If $Y_0$ and $Y_1$ are a pair of 3-manifolds appearing in a surgery triad for $Y$ as in the definition of a formal L-space, then it can be verified that one of the surgery cobordisms from $Y_0$ or $Y_1$ to $Y$ will be negative definite (see \cite[Section~2.2]{Lidman2017}, for example). This allows one to build a negative-definite manifold inductively via 2-handle attachments to $B^4$. However, if $K(r)$ bounds a negative definite manifold with $H_1=0$, then $K(\lceil r \rceil)$ also bounds such a manifold, since there is a negative-definite 2-handle cobordism from $K(r)$ to $K(\lceil r \rceil)$ \cite[Lemma~2.6]{Owens_Strle12}. The work of Greene \cite{Greene2015genusbounds} implies that we then have the bound 
    \[2g(K)\leq \lceil r \rceil - \sqrt{\lceil r \rceil}\]
    which is equivalent to
    \[(\sqrt{2g(K)+1/4}+1/2)^2\leq \lceil r \rceil .\]
    Thus we obtain
    \[r>\lceil r \rceil-1\geq 2g(K)-\frac{1}{2}+\sqrt{2g(K)+1/4},\]
    which is equivalent to the claimed inequality.  
\end{proof}

\begin{figure}
  \centerline{
    \begin{overpic}[width=0.7\textwidth]{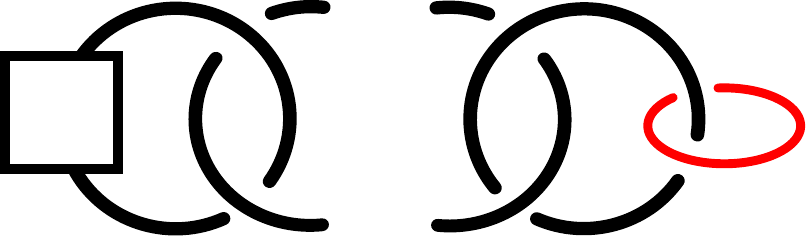}
      \put (4,15) {\Large $K$}
      \put (43,14) {\LARGE $\dots$}
      \put (90,4) {\Large $C$}
      \put (17,-4) {$a_1$}
      \put (76,-3) {$a_\ell-1$}
      \put (55,-4) {$a_{\ell-1}$}
    \end{overpic}
  }
  \vspace{0.5cm}
\caption{The triad obtained by performing $0$, $1/0$ and $-1$ surgery respectively on the knot $C$ yields $K(p_0/q_0)$, $K(p_1/q_1)$ and $K(p/q)$.}
\label{fig:triad2}
\end{figure}

\section{Observations and questions}\label{sec:questions}
	
We finish this article by stating some observations and questions.

 \medskip
The strategies from Section~\ref{sec:strategies} together with the main result of~\cite{Twisting_QA} and~\cite{Watson_KH_sym2017} work well in practice to determine the quasi-alternating/thin status for large infinite subsets of slopes for a given knot. More concretely: For an asymmetric knot, every quasi-alternating/thin slope has to be one of the finitely many symmetry-exceptional-symmetric slopes, and thus one can evoke Strategies~\ref{strat:symmetryexceptional_slopes}, \ref{strat:branchlocussearch}, and \ref{strat:singleQA_slope}. If an L-space knot $K$ is not asymmetric, then it is conjectured that it admits a single involution~\cite{L_space_periodic_conj}. In that case, we can use Strategy~\ref{strat:dbc_slopes} to get all branching sets as tangle fillings $T(r)$ of the tangle exterior $T$. The main result of~\cite{Twisting_QA} gives a criterion to demonstrate that $T(r)$ is quasi-alternating for all $r$ in an interval of the form $[a,\infty)$, while~\cite{Watson_KH_sym2017} can be used to show that infinitely $T(r)$ is Khovanov thick for all $r$ in an interval of the form $(-\infty,b)$. However, in general, $a$ and $b$ do not coincide.

\begin{ques}
	Does there exist an algorithm to find the set of quasi-alternating/thin slopes for a given knot $K$ in $S^3$?
\end{ques}

The classification of such slopes for the census knots would be one useful application of such an algorithm, or at least a testing ground for designing one.
\begin{ques}
	What is the classification of the quasi-alternating/thin slopes of knots in the SnapPy census?
\end{ques}

To highlight the complexity of this problem, it even seems to be unknown which slopes are thin for torus knots.
	
\begin{ex}\label{ex:torus_thin}
	Consider the torus knot $T_{5,3}$. Theorem~\ref{thm:torus_knotsQA} shows that the quasi-alternating slopes are $(12+1/2,\infty)$. In Example~\ref{ex:torus} we have observed that the $(12+1/2)$-surgery on $T_{5,3}$ is the double branched cover of the knot $K11n50$ which is thin but not quasi-alternating~\cite{Greene_thin_QA}. 
	If we apply \cite{Watson_KH_sym2017} we get that all slopes smaller or equal to $12$ are thick. Thus the slopes in the interval $(12,12+1/2)$ remain of undetermined thick/thin status. However, experiments suggest that all surgeries in that interval are thin. For example: $K10n21(12+1/3)=DBC(K12n296)$ and $K10n21(12+1/4)=DBC(K13n2006)$ and both branching sets are thin but not quasi-alternating. 
\end{ex}
	
\begin{ques}\label{ques:thin_torus}
	What is the classification of the thin slopes of torus knots?
\end{ques}

Most surgeries on torus knots are small Seifert fibered spaces \cite{Moser1971elementary} and these arise as double branched covers of Montesinos links and Seifert links. Since the Seifert links with L-space double branched covers are also Montesinos links \cite{ADE_link}, this naturally leads to the question of thinness for Montesinos links.

\begin{ques}
	What is the classification of thin Montesinos links?
\end{ques}
 
Computing the Khovanov homology of a link is algorithmic (though perhaps not practical), and hence so is determining whether any given link is thin. 
\begin{ques}\label{ques:QA_decidable}
	Is there an algorithm to detect if a given link $L$ in $S^3$ is quasi-alternating or not?
\end{ques}

Note that from the inductive definition of quasi-alternating links, it follows that there exists an algorithm that certifies in finite time if a knot is quasi-alternating. Thus Question~\ref{ques:QA_decidable} reduces to the existence question for an algorithm that certifies if link is not quasi-alternating.

\medskip
	
	In Theorem~\ref{thm:asym_QA} we observed that every asymmetric census L-space knot has exactly two dbc slopes, these two dbc slopes
	are consecutive integers, and each of these dbc slopes is quasi-alternating.
	Theorem~\ref{thm:BLknots} shows that a handful of the Baker--Luecke asymmetric L-space knots behave similarly (where the two quasi-alternating slopes are actually both alternating).
	Thus we wonder the following, though are skeptical that any are affirmative in general. 
\begin{ques}\label{ques:asym}\phantom{xxxxxx}
	\begin{enumerate}
		\item Does every hyperbolic asymmetric L-space knot have exactly two dbc slopes?  
		\item Suppose a hyperbolic asymmetric L-space knot has exactly two dbc slopes: 
		\begin{enumerate}
			\item Are they consecutive integers?
			\item Are they both quasi-alternating?
			\item Do the branching sets for the surgered manifolds have the same crossing number?
		\end{enumerate} 
	\end{enumerate}
\end{ques}
	
\let\MRhref\undefined
\bibliographystyle{hamsalpha}
\bibliography{altsurg.bib}

\end{document}